\documentclass[12pt,reqno]{amsart}
\usepackage{amssymb}
\textwidth=14.5cm \oddsidemargin=1cm \evensidemargin=1cm
\usepackage{mathrsfs}
\usepackage{palatino}
\usepackage{bbm}
\def\HC{{\operatorname{HC}}}

\newcommand{\CC}{\mathbb{C}}

\newcommand{\NN}{\mathbb{N}}

\newcommand{\fg}{\mathfrak{g}}

\newcommand{\kk}{\mathbbm{k}}

\newcommand{\gl}{\mathfrak{gl}}

\DeclareMathOperator{\ad}{ad}

\DeclareMathOperator{\qdet}{qdet}

\DeclareMathOperator{\Char}{char}

\DeclareMathOperator{\gr}{gr}

\DeclareMathOperator{\sgn}{sgn}
\DeclareMathOperator{\Mat}{Mat}

\usepackage{mathrsfs}
\usepackage{cases}
\usepackage{epic}
\usepackage{amsfonts}
\usepackage{graphicx}
\usepackage{amsmath}
\usepackage{amssymb, upgreek}
\usepackage{bm}
\usepackage{latexsym,todonotes}
\usepackage{pdflscape}
\usepackage[all]{xypic}
\usepackage[all]{xy}
\usepackage{color}
\usepackage{colordvi}
\usepackage{multicol}
\usepackage[normalem]{ulem}
\usepackage[linktocpage=true]{hyperref}
\usepackage{hyperref}
\hypersetup{colorlinks,linkcolor=blue,urlcolor=cyan,citecolor=red}
\usepackage{pictex}
\usepackage{amscd}
\usepackage{latexsym}
\usepackage{amssymb}
\usepackage{eucal}
\usepackage{eufrak}
\usepackage{verbatim}

\setlength{\textheight}{8.8in}
\setlength{\textwidth}{6.8in}
\setlength{\topmargin}{0.0in}
\setlength{\oddsidemargin}{-.2in}
\setlength{\evensidemargin}{-.2in}
\numberwithin{equation}{section}
\newtheorem{Theorem}{Theorem}[section]
\newtheorem{Lemma}[Theorem]{Lemma}
\newtheorem{Corollary}[Theorem]{Corollary}
\newtheorem{Proposition}[Theorem]{Proposition}
\newtheorem{Remark}[Theorem]{Remark}

\theoremstyle{Theorem}

\newtheorem*{thm*}{Theorem}
\newtheorem*{thm**}{Corollary}
\newtheorem*{thm***}{Theorem B}
\theoremstyle{remark}

\numberwithin{equation}{section}

\begin{document}

\title[Parabolic generators]{The center of modular shifted Yangians and parabolic generators}
\author[Hao Chang \lowercase{and} Hongmei Hu]{Hao Chang \lowercase{and} Hongmei Hu*}
\address[Hao Chang]{School of Mathematics and Statistics, 
Hubei Key Laboratory of Mathematical Sciences, 
and Key Laboratory of Nonlinear Analysis \& Applications (Ministry of Education), Central China Normal University, Wuhan 430079, China}
\email{chang@ccnu.edu.cn}
\address[Hongmei Hu]{Department of Mathematics, Shanghai Maritime University, Shanghai 201306, China}
\email{hmhu@shmtu.edu.cn}
\thanks{* Corresponding author.}

\makeatletter
\makeatother

\subjclass[2020]{Primary 17B37, Secondary 17B50}

\begin{abstract} 
This paper is devoted to the study of the shifted Yangian $Y_n(\sigma)$ associated to the general linear Lie algebra $\gl_n$ over a field of positive characteristic.
We obtain an explicit description of the center $Z(Y_n(\sigma))$ of $Y_n(\sigma)$ in terms of parabolic generators,
showing that it is generated by its Harish-Chandra center and its $p$-center.
\end{abstract}

\maketitle
\section{Introduction}
The Yangians were introduced by Drinfeld in his fundamental paper \cite{D85}.
They form a remarkable family of quantum groups related to rational solutions of the classical {\em Yang-Baxter} equation.
The Yangian $Y_n$ associated to the Lie algebra $\gl_n$ was earlier considered in the work
of mathematical physicists from St. Petersburg \cite{TF79}. 
It is an associative algebra whose defining relations can be written in a specific matrix form, which is called the {\em RTT relation}; see e.g. \cite{MNO96}.
For related topics and further applications of Yangians,
the reader is referred to \cite{Mol07} and the references cited therein.

In \cite{BK05},
Brundan and Kleshchev found a {\em parabolic presentation} for $Y_n$ associated to each composition $\mu$ of $n$. 
The new presentation corresponds to a block matrix decomposition of $\gl_n$ of shape $\mu$. 
In the special case when $\mu=(n)$, 
this presentation is exactly the original {\em RTT presentation}. 
On the other extreme case when $\mu=(1,\dots,1)=(1^n)$, 
the corresponding parabolic presentation is just a variation of Drinfeld’s from \cite{D88};
see \cite[Remark 5.12]{BK05}. 
The parabolic presentations have played a crucial role in their subsequent works.
Over the field of complex numbers, 
Brundan–Kleshchev \cite{BK06} introduced {\em shifted Yangians} and their truncated analogues,
which are isomorphic to the {\em finite $W$-algebras} associated to nilpotent orbits in $\gl_n$,
as defined by Premet \cite{Pre02}.
This allowed them to
make an extensive study of the representation theory of these finite $W$-algebras in
\cite{BK08}.

In characteristic zero, it is well-known that the center $Z(Y_n)$ of $Y_n$ is generated by
the coefficients of the {\em quantum determinant} $\qdet T(u)$, see \cite[Theorem 2.13]{MNO96}.
The quantum determinant $\qdet T(u)$ admits a factorization in terms of the diagonal Drinfeld generators (\cite[Theorem 8.6]{BK05},
see also \cite[Theorem 1.10.5]{Mol07}).
Moreover, the authors proved that $\qdet T(u)$ can also be decomposed into the product
in terms of the diagonal parabolic generators \cite[Proposition 3.2]{CH23-1}.

In \cite{BT18}, Brundan and Topley developed the theory of shifted Yangians $Y_n(\sigma)$ over a field of positive characteristic. 
In particular, they gave a modular version of a Drinfeld-type presentation of $Y_n(\sigma)$. 
One of the key features which differs from characteristic zero is the existence of a large central subalgebra $Z_p(Y_n(\sigma))$, 
called the {\em $p$-center}.
These results give important applications to the theory of {\em modular finite
$W$-algebras} \cite{GT2019}.
The authors observed that Brundan–Kleshchev’s isomorphism can be reduced to positive characteristic,
i.e., the modular finite $W$-algebra is a truncation of the modular shifted Yangian \cite{GT19}. 
Both algebras admit a $p$-center,
and the restricted version of Brundan–Kleshchev’s isomorphism was established by Goodwin and
Topley (see \cite{GT21}).

The main goal of this article is to obtain the generalization of the $p$-center of \cite{BT18}
for the modular shifted Yangian $Y_n(\sigma)$ in terms of parabolic generators.
Our approach is basically generalizing the arguments in \cite{BT18} with suitable modifications,
however there are other technical obstacles that did not appear in the Drinfeld presentation and
we need to do more complicated computations when treating the parabolic presentation.
We expect the results of this article will be used on more connections 
between the modular shifted Yangian $Y_n(\sigma)$ and the modular finite $W$-algebras
in future work (cf. \cite[Theorem 10.1]{BK06}, \cite[Theorem 4.3]{GT19}).
It would also be interesting to generalize the results in this article to the shifted super Yangian $Y_{m|n}(\sigma)$ (see \cite{Peng21}, \cite{CH23-2}).

We give a quick outline of our main results for the Yangian $Y_n$,
that is, the case $\sigma=0$.
The Yangian $Y_n$ over an algebraically closed field $\kk$ of positive characteristic
is defined to be the associative algebra with the usual RTT presentation.
It is a filtered deformation of the universal enveloping algebra $U(\gl_n[t])$ of the polynomial current Lie algebra
$\gl_n[t]:=\gl_n\otimes \kk[t]$.

In positive characteristic, 
the coefficients of the quantum determinant $\qdet T(u)$ still belong to the center $Z(Y_n)$.
The algebra generated by the coefficients will be denoted by $Z_\HC(Y_n)$ which is a subalgebra of $Z(Y_n)$.
We call it the {\em Harish-Chandra center} of $Y_n$.
Suppose that $\Char{\kk}=:p>0$.
The current algebra $\gl_n[t]$ admits a natural {\em restricted Lie algebra} structure,
with $p$-map $x\mapsto x^{[p]}$.
Given a composition $\mu=(\mu_1,\dots,\mu_m)$ of $n$ of length $m$,
we decompose the $\gl_n[t]$ into block matrices according to the composition $\mu$.
There results a basis of $\gl_n[t]$ (see \eqref{shifted current algebra spanned set}):
\[
\{e_{a,b;i,j}t^r;~1\leq a, b\leq m,1\leq i\leq\mu_a, 1\leq j\leq \mu_b, r\geq 0\}.
\]
Then $p$-map is defined on the basis by the rule $(e_{a,b;i,j}t^r)^{[p]}:=\delta_{a,b}\delta_{i,j}e_{a,b;i,j}t^{rp}$.
By general theory (cf. \cite[Section 2.3]{Jan97}),
the enveloping algebra $U(\gl_n[t])$ has a large $p$-center $Z_p(\gl_n[t])$ generated by the elements
\begin{align}
\{(e_{a,b;i,j}t^r)^p-\delta_{a,b}\delta_{i,j}e_{a,b;i,j}t^{rp};~1\leq a, b\leq m,1\leq i\leq\mu_a, 1\leq j\leq \mu_b, r\geq 0\}.  
\end{align}
It is natural to look for lifts of the $p$-central elements in $Z(Y_n)$.
In Section \ref{section name:Centers},
we will give a description of the $p$-center $Z_p(Y_n)$ in terms of parabolic generators 
and show that the generators of $Z_p(Y_n)$ provide the lifts of generators for $Z_p(\gl_n[t])$.
We show in particular that the center $Z(Y_n)$ is generated by $Z_\HC(Y_n)$ and $Z_p(Y_n)$.

We organize this article in the following manner. 
In Section \ref{section name: shifted current}, 
we recall some notations and properties about the shifted current Lie algebra in the parabolic setting.
We introduce the modular Yangians
and shifted Yangians in Section \ref{section name:modular Yangians},
and extend the parabolic presentation from characteristic zero to positive characteristic. 
Section \ref{section name:Centers} is concerned with the center of $Y_n(\sigma)$. 
We investigate various $p$-central elements by employing the parabolic presentation. 
Moreover, we give a description
of the center of $Y_n(\sigma)$ and obtain the precise formulas for the generators. 
We also show that the $p$-center of $Y_n(\sigma)$ is independent of the choice of the shape $\mu$.

\emph{Throughout this paper, $\kk$ denotes an algebraically closed field of characteristic $\Char(\kk)=:p>2$.}

\section{The shifted current algebra}\label{section name: shifted current}
In this section, we recall the definition {\em shifted current algebra} and describe the center of its enveloping algebra.
Most results follow from \cite[Section 3]{BT18}, 
we slightly change the notation in the {\em parabolic} setting for our later studies. 

\subsection{Shift matrix and shifted current algebra}\label{Section: shift matrix and shifted currentalgebra}
A {\em shift matrix} is an $n\times n$ array $\sigma=(s_{i,j})_{1\leq i,j\leq n}$ of non-negative integers satisfying
\begin{align}\label{shift matrix condition}
s_{i,j}+s_{j,k}=s_{i,k}    
\end{align}
whenever $|i-j|+|j-k|=|i-k|$.

Let $\fg:=\gl_n[t]=\gl_n\otimes \kk[t]$ be the {\em current Lie algebra},
$U(\fg)$ be its universal enveloping algebra.
The elements $e_{i,j}t^r:=e_{i,j}\otimes t^r$ with $r\geq 0$ and $i,j=1,\dots, n$ make a basis of $\fg$,
and the Lie bracket with $r,s\geq 0$ is given by
\begin{align}\label{Lie bracket of current Lie}
 [e_{i,j}t^r,e_{k,l}t^s]=\delta_{k,j}e_{i,l}t^{r+s}-\delta_{l,i}e_{k,j}t^{r+s}.
\end{align}

Let $\mu=(\mu_1,\dots,\mu_m)$ be a given composition of $n$ of length $m$.
For each $1\leq a\leq m$, we define the partial sum
\begin{align}\label{notation:partial sum}
p_a(\mu):=\mu_1+\cdots+\mu_{a-1}.
\end{align}
Assume that $\sigma=(s_{i,j})_{1\leq i,j\leq n}$ is a fixed shift matrix of size $n$.
We say a composition $\mu=(\mu_1,\dots,\mu_m)$ is {\em admissible} to $\sigma$ if $s_{i,j}=0$
for all $p_a(\mu)+1\leq i,j\leq p_{a+1}(\mu)$ and $1\leq a\leq m$.
When $\mu=(\mu_1,\dots,\mu_m)$ is admissible to $\sigma$, we will use a shorthand notation
\begin{align}\label{sab mu}
s_{a,b}^{\mu}:=s_{p_{a+1}(\mu),p_{b+1}(\mu)}=s_{\mu_1+\cdots+\mu_{a},\mu_1+\cdots+\mu_b},~\ \ \forall~1\leq a,b\leq m.
\end{align}
Note that one can recover the original matrix $\sigma$ if an admissible shape $\mu$ and the numbers $\{s_{a,b}^{\mu};~1\leq a,b\leq m\}$ are known.
The admissible condition in conjunction with \eqref{shift matrix condition} implies that for any $1\leq a, b\leq m$, we have
\begin{align}\label{sab mu=si,j}
s_{p_a(\mu)+i,p_b(\mu)+j}=s_{a,b}^{\mu},~\ \ \forall~1\leq i \leq \mu_a, 1\leq j\leq \mu_b.
\end{align}

Given a composition $\mu=(\mu_1,\dots,\mu_m)$, we define 
\[
e_{a,b;i,j}t^r:=e_{p_a(\mu)+i,p_b(\mu)+j}t^r=e_{\mu_1+\cdots+\mu_{a-1}+i,\mu_1+\cdots+\mu_{b-1}+j}t^r\in\fg.
\]
Using \eqref{Lie bracket of current Lie},
we compute immediately
\begin{align}\label{Lie bracket current lie 2}
 [e_{a,b;i,j}t^r,e_{c,d;k,l}t^s]=\delta_{b,c}\delta_{k,j}e_{a,d;i,l}t^{r+s}-\delta_{d,a}\delta_{l,i}e_{c,b;k,j}t^{r+s}.
\end{align}

Let $\sigma$ be a fixed shift matrix, $\mu=(\mu_1,\dots,\mu_m)$ an admissible shape to $\sigma$.
The {\em shifted current algebra} is $\fg_{\sigma}\subseteq\fg$ spanned by
\begin{align}\label{shifted current algebra spanned set}
\{e_{a,b;i,j}t^r;~1\leq a,b\leq m, 1\leq i\leq \mu_a, 1\leq j\leq\mu_b, r\geq s_{a,b}^{\mu}\}.
\end{align}
Since $\mu$ is admissible, 
\eqref{sab mu=si,j} implies that the definition of $\fg_\sigma$ coincides with the definition given in \cite[Section 3.2]{BT18}.
Hence it is independent of the particular choice of the admissible shape $\mu$.
Moreover,
the shifted current algebra $\fg_\sigma$ is a restricted Lie subalgebra of $\fg$, see \cite[Section 3.4]{BT18} for more details.

\subsection{The center of $U(\fg_{\sigma})$}
There is one obvious family of central elements in the universal enveloping algebra $U(\fg_\sigma)$.
For any $r\in\NN$, we set
\[
z_r:=\sum\limits_{i=1}^{\mu_1}e_{1,1;i,i}t^r+\cdots+\sum\limits_{i=1}^{\mu_m}e_{m,m;i,i}t^r\in\fg_{\sigma}.
\]
Denote by $Z(\fg_\sigma):=Z(U(\fg_{\sigma}))$ the center of $U(\fg_\sigma)$. 
It follows that $\kk[z_r;~r\geq 0]$ is a subalgebra of $Z(\fg_\sigma)$.
As $\fg_\sigma$ is restricted, the {\em $p$-center} $Z_p(\fg_\sigma)$ of $U(\fg_\sigma)$ can be generated by the set
\begin{align}\label{generator of p-centerof ug sigma}
\{(e_{a,b;i,j}t^r)^p-\delta_{a,b}\delta_{i,j}e_{a,b,;i,j}t^{rp};~1\leq a,b\leq m, 1\leq i\leq \mu_a, 1\leq j\leq\mu_b, r\geq s_{a,b}^{\mu}\}.
\end{align}
\begin{Theorem}\cite[Theorem 3.4]{BT18}\label{theorem:center of ugsigma}
The center $Z(\fg_\sigma)$ of $U(\fg_\sigma)$ is generated by $\{z_r;~r\geq 0\}$ and $Z_p(\fg_\sigma)$.
In fact, $Z(\fg_\sigma)$ is freely generated by 
\begin{align*}
\{z_r;~r\geq 0\}\cup\{(e_{a,b;i,j}t^r)^p-\delta_{a,b}\delta_{i,j}e_{a,b,;i,j}t^{rp};~1\leq a,b\leq m, & 1\leq i\leq \mu_a, 1\leq j\leq\mu_b\\
&\ \ \ \  ~\text{with}~(a,b,i,j)\neq (1,1,1,1), r\geq s_{a,b}^{\mu}\}.
\end{align*}
\end{Theorem}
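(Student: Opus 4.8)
The plan is to prove that the central subalgebra $\cZ:=\kk[z_r:r\ge 0]\cdot Z_p(\gs)$ is all of $Z(\gs)$ and to extract the advertised free generating set. Write $\eta_\nu:=(e_{a,b;i,j}t^r)^p-\delta_{a,b}\delta_{i,j}e_{a,b;i,j}t^{rp}$, with $\nu=(a,b,i,j,r)$, for the generators of $Z_p(\gs)$ from \eqref{generator of p-centerof ug sigma}. First I would check $\cZ\subseteq Z(\gs)$: each $z_r$ is central since \eqref{Lie bracket current lie 2} gives $[z_r,e_{c,d;k,l}t^s]=e_{c,d;k,l}t^{r+s}-e_{c,d;k,l}t^{r+s}=0$, and each $\eta_\nu$ is central by the standard fact that $x^p-x^{[p]}$ is central in the enveloping algebra of a restricted Lie algebra. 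Because the diagonal elements $e_{a,a;i,i}t^r$ pairwise commute, taking $p$-th powers in characteristic $p$ yields $z_r^{\,p}=\sum_{a,i}(e_{a,a;i,i}t^r)^p=z_{rp}+\sum_{a,i}\eta_{(a,a,i,i,r)}$, so that
\[
z_{rp}=z_r^{\,p}-\sum_{a,i}\eta_{(a,a,i,i,r)},\qquad r\ge 0.
\]
Solving this identity for $\eta_{(1,1,1,1,r)}$ shows that generator is redundant, so $\cZ$ is generated by the reduced list appearing in the statement.

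To see the reduced list is algebraically independent I would pass to symbols in $\gr U(\gs)=S(\gs)$. The symbol of $z_r$ is the linear form $\ell_r=\sum_{a,i}e_{a,a;i,i}t^r$, and the symbol of $\eta_\nu$ is the monomial $(e_{a,b;i,j}t^r)^p$, a $p$-th power of a polynomial variable. In the polynomial ring $S(\gs)$ the $p$-th powers of distinct variables are algebraically independent, and the only relations involving the $\ell_r$ are $\ell_r^{\,p}=\sum_{a,i}(e_{a,a;i,i}t^r)^p$, one per level $r$; deleting the single power $(e_{1,1;1,1}t^r)^p$ breaks each relation, and a transcendence-degree count at each level shows the surviving symbols are algebraically independent. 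As the generators carry well-defined PBW degrees, independence of the symbols forces independence of the reduced list itself.

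The real content is the reverse inclusion $Z(\gs)\subseteq\cZ$. Here I would use that $U(\gs)$ is free as a module over the polynomial algebra $R:=Z_p(\gs)$, on the restricted PBW monomials (general restricted theory, cf. \cite[Section 2.3]{Jan97}). Filtering $U(\gs)$ by the powers of the augmentation ideal $R^{+}$ produces a separated and exhaustive filtration whose associated graded is the polynomial algebra $\gr_R U(\gs)\cong u(\gs)[\eta_\nu]$ over the restricted enveloping algebra $u(\gs)=U(\gs)/(R^{+})$, with the $\eta_\nu$ as central indeterminates. Passing to centres gives $\gr_R Z(\gs)\subseteq Z(u(\gs))[\eta_\nu]$, whereas $\gr_R\cZ=\kk[\bar z_r]\,[\eta_\nu]$, where $\bar z_r$ denotes the image of $z_r$. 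Everything therefore reduces to the single identity $Z(u(\gs))=\kk[\bar z_r:r\ge 0]$: granting it, the two graded algebras coincide and separatedness of the filtration upgrades $\gr_R\cZ=\gr_R Z(\gs)$ to $\cZ=Z(\gs)$.

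The main obstacle is precisely this identity, the determination of $Z(u(\gs))$. I would approach it through the triangular decomposition $\gs=\fn^{-}\oplus\ft\oplus\fn^{+}$ into the diagonal, strictly upper and strictly lower current parts (a genuine Lie algebra decomposition, since the bracket \eqref{Lie bracket current lie 2} preserves triangular type) together with the Harish-Chandra projection $\beta\colon u(\gs)\to u(\ft)$. In $u(\gs)$ every off-diagonal root vector is $p$-nilpotent, so the characteristic-$p$ phenomenon that renders $\beta$ non-injective on $Z(U(\gs))$---namely the presence of off-diagonal $\eta_\nu$ with vanishing Harish-Chandra image---disappears, and one expects $\beta$ to be injective on $Z(u(\gs))$. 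The delicate point, and where the computations become heaviest in the parabolic setting, is to prove that $\beta(Z(u(\gs)))$ is exactly $\kk[\bar z_r]$: that the only surviving central characters are the ``mode-wise linear'' symmetric functions, and that no higher Casimir of $\gl_n$ can persist once it is forced to commute with all the higher current modes $e_{c,d;k,l}t^{s}$.
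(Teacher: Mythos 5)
You leave the crux of the theorem unproven. Your whole argument funnels into the single identity $Z(u(\gs))=\kk[\bar z_r;\,r\geq 0]$ for the restricted enveloping algebra, and at exactly that point you switch from proof to expectation: injectivity of the Harish-Chandra projection $\beta$ on $Z(u(\gs))$ is only ``expected'', and the computation of $\beta(Z(u(\gs)))$ is acknowledged as ``the delicate point''. This identity is not a standard input and is at least as hard as the theorem itself: already for finite-dimensional $\gl_n$ the centre of the restricted enveloping algebra is strictly larger than the image of the Harish-Chandra centre (for $u(\fsl_2)$ with $p>2$ there are central elements supported on singular blocks, invisible to any naive injectivity argument for $\beta$), so your triangular-decomposition attack must genuinely use commutation with the higher modes $e_{c,d;k,l}t^s$ to kill such elements, and nothing in the sketch does this. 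Note that the paper itself does not reprove the statement but quotes it as \cite[Theorem 3.4]{BT18}, and Brundan--Topley's proof runs the other way around from yours: they use the \emph{increasing} PBW-type filtration on $U(\gs)$, observe that $\gr Z(\gs)$ lands in the Poisson-invariant algebra $S(\gs)^{\gs}$, and compute that invariant algebra directly to be the free polynomial algebra on the linear symbols $\ell_r$ and the $p$-th powers $x_\nu^p$ with $\nu\neq(1,1,1,1,r)$ --- precisely the symbols of the claimed generators --- so that equality follows by an induction on filtration degree, with no structure theory of $u(\gs)$ needed at all.

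There is a second, more technical error: for your \emph{decreasing} $R^+$-adic filtration, separatedness does \emph{not} upgrade $\gr_R\cZ=\gr_R Z(\gs)$ to $\cZ=Z(\gs)$. Counterexample: $A=\kk[x+x^2]\subsetneq B=\kk[x]$ with the $(x)$-adic filtration is separated and has $\gr A=\gr B$, yet $x\notin A$; the successive approximations converge only in the completion. One needs either completeness (which the $R^+$-adic filtration on $U(\gs)$ lacks) or a termination argument. In your setting this is repairable: writing elements in the restricted PBW basis over $R$, base-$p$ uniqueness of exponents in $S(\gs)$ shows $(R^+)^kU(\gs)\cap {\rm F}^{\mathrm{PBW}}_N=0$ once $kp>N$, so corrections of bounded PBW degree stabilize --- but this must be said, and it quietly reintroduces the increasing filtration that the paper's cited proof uses from the start. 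The parts you do prove --- centrality of $z_r$ and $\eta_\nu$, the redundancy relation $z_r^p=z_{rp}+\sum_{a,i}\eta_{(a,a,i,i,r)}$ eliminating the generator with $(a,b,i,j)=(1,1,1,1)$, and algebraic independence via symbols --- are correct and consistent with \cite{BT18}.
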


\section{Modular Yangians}\label{section name:modular Yangians}
\subsection{The RTT generators}\label{subsection:rtt generators}
The Yangian associated to the general linear Lie algebra $\gl_n$,
denoted by $Y_n$, is the associative algebra over $\kk$ with the {\em RTT generators}
$\{t_{i,j}^{(r)};~1\leq i,j\leq n,~r>0\}$ subject the following relations:
\begin{align}\label{RTT relations}
\left[t_{i,j}^{(r)}, t_{k,l}^{(s)}\right] =\sum_{t=0}^{\min(r,s)-1}
\left(t_{k, j}^{(t)} t_{i,l}^{(r+s-1-t)}-
t_{k,j}^{(r+s-1-t)}t_{i,l}^{(t)}\right)
\end{align}
for $1\leq i,j,k,l\leq n$ and $r,s>0$.
By convention, we set $t_{i,j}^{(0)}:=\delta_{i,j}$.
We often put the generators $t_{i,j}^{(r)}$ for all $r\geq 0$ together to form the generating function
\[
t_{i,j}(u):= \sum_{r \geq 0}t_{i,j}^{(r)}u^{-r} \in Y_n[[u^{-1}]].
\]
Then these power series for all $1\leq i,j\leq n$ can be collected together into a single matrix $T(u):=(t_{i,j}(u))_{1\leq i,j\leq n}\in\Mat_n(Y_n[[u^{-1}]])$.
It is easily seen that, in terms of the generating series,
the initial defining relation \eqref{RTT relations} may be rewritten as follows:
\begin{align}\label{tiju tklv relation}
(u-v)[t_{i,j}(u),t_{k,l}(v)]=t_{k,j}(u)t_{i,l}(v)-t_{k,j}(v)t_{i,l}(v).
\end{align}

We record the following fundamental {\em PBW theorem} (\cite[Theorem 4.1]{BT18}).
\begin{Theorem}\label{Theorem: PBW-RTT}
Ordered monomials in the generators $\{t_{i,j}^{(r)};~1\leq i,j\leq n, r>0\}$
taken in any fixed order given a basis of $Y_n$.
\end{Theorem}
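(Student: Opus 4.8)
The plan is to realize $Y_n$ as a filtered deformation of $U(\gl_n[t])$ and then to pin down its size by means of a faithful family of evaluation representations. First I would equip $Y_n$ with the \emph{loop filtration} $\deg t_{i,j}^{(r)} := r-1$ and read off the leading term of the defining relations. Because $t_{i,j}^{(0)} = \delta_{i,j}$, the only summand on the right-hand side of \eqref{RTT relations} of top degree $r+s-2$ is the $t=0$ term, so that
\[
[t_{i,j}^{(r)}, t_{k,l}^{(s)}] \equiv \delta_{k,j}\, t_{i,l}^{(r+s-1)} - \delta_{i,l}\, t_{k,j}^{(r+s-1)} \pmod{\deg \leq r+s-3}.
\]
Comparing with \eqref{Lie bracket of current Lie}, the leading symbols obey exactly the bracket of $\gl_n[t]$ under the correspondence $\overline{t_{i,j}^{(r)}} \leftrightarrow e_{i,j}t^{r-1}$.

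Using these relations I would run the usual reordering induction on filtered degree and on the number of inversions in a monomial: any product of generators equals, modulo lower degree, a linear combination of ordered monomials. Hence the ordered monomials span $Y_n$, and the assignment $e_{i,j}t^{r-1}\mapsto \overline{t_{i,j}^{(r)}}$ extends to a surjective algebra homomorphism $\phi\colon U(\gl_n[t]) \twoheadrightarrow \gr Y_n$. Since the ordered PBW monomials of $U(\gl_n[t])$ form a basis and are carried by $\phi$ onto the symbols of the ordered monomials of $Y_n$, the theorem is equivalent to the injectivity of $\phi$, i.e. to the linear independence of those ordered monomials.

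The substantive step is the resulting lower bound, and here the positive-characteristic setting forbids any appeal to a deformation argument over $\CC$; instead I would build a faithful representation directly, exploiting that $\kk$ is infinite. For $a\in\kk$ let $V_a$ be the evaluation module on $V=\kk^n$ on which $t_{i,j}(u)$ acts as the operator $\delta_{i,j} + (u-a)^{-1}e_{i,j}$; the RTT relations (equivalently the ternary relation for the Yang $R$-matrix $R(u)=1-Pu^{-1}$, with $P$ the flip of $V\otimes V$) guarantee that this is a $Y_n$-module. Via the coproduct $\Delta t_{i,j}(u) = \sum_k t_{i,k}(u)\otimes t_{k,j}(u)$ the tensor products $M=V_{a_1}\otimes\cdots\otimes V_{a_N}$ are again $Y_n$-modules. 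Fixing a degree bound and taking $N$ large with $a_1,\dots,a_N\in\kk$ pairwise distinct, I would show that the ordered monomials up to that degree act on $M$ by linearly independent operators, by tracking the leading dependence of their matrix coefficients on the parameters $a_i$. As no nonzero element can act by zero on every such $M$, the map $\phi$ is injective and the PBW property follows.

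The main obstacle is exactly this separation estimate in characteristic $p$: one must check that the polynomial expressions in $a_1,\dots,a_N$ that distinguish distinct ordered monomials do not vanish identically modulo $p$, which is why an infinite supply of distinct evaluation parameters, and an $N$ chosen large relative to the degree, are indispensable. A cleaner bookkeeping alternative is to introduce the integral form $Y_n^{\ZZ}$ defined by the same relations, invoke the classical PBW theorem over $\CC$ to fix the rank, prove that $Y_n^{\ZZ}$ is $\ZZ$-free of that rank by matching the spanning set against it, and finally base-change to $\kk$; the faithful modules above are what make the required rank computation honest.
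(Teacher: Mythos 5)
The paper does not actually prove this statement: it records it as a citation of \cite[Theorem 4.1]{BT18}, so your proposal has to be judged as a reconstruction of that proof. Your spanning argument via the loop filtration, and the reduction of the theorem to injectivity of the surjection $\phi\colon U(\gl_n[t])\twoheadrightarrow \gr Y_n$, are correct and standard. But your main route — linear independence via a faithful family of tensor products of natural evaluation modules — has a genuine gap: in characteristic $p$ this family is \emph{not} faithful, so the separation estimate you defer is not merely delicate, it is false. Concretely, $\Delta(t_{1,1}^{(1)})=t_{1,1}^{(1)}\otimes 1+1\otimes t_{1,1}^{(1)}$, and $t_{1,1}^{(1)}$ acts on each $V_a$ by $e_{1,1}$ independently of the parameter $a$; hence on $M=V_{a_1}\otimes\cdots\otimes V_{a_N}$ it acts by $X=\sum_{m=1}^{N}1^{\otimes(m-1)}\otimes e_{1,1}\otimes 1^{\otimes(N-m)}$, a diagonalizable operator whose eigenvalues are counts of tensor positions, i.e.\ lie in $\FF_p\subseteq\kk$. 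Therefore $X^p=X$ on \emph{every} such $M$, for every $N$ and all choices of parameters: the two distinct ordered monomials $(t_{1,1}^{(1)})^p$ and $t_{1,1}^{(1)}$ act by the same operator throughout your family, so their actions can never be linearly independent, and the element $(t_{1,1}^{(1)})^p-t_{1,1}^{(1)}$ (nonzero, as PBW itself shows) annihilates all these modules — directly contradicting your claim that no nonzero element acts by zero on every $M$. This is no accident: it is the shadow of the large $p$-center that this paper is about, and it is an identity of operators, not the vanishing of a Vandermonde-type polynomial, so no ``$N$ large, parameters distinct'' hypothesis can repair it. (A related symptom: in the leading multilinear bookkeeping, the coefficient of the image of a $k$-th power of a generator carries a factor $k!$, which vanishes for $k\geq p$.) Mapping into $U(\gl_n)^{\otimes N}$ itself rather than into its natural modules avoids this particular collapse, since $X^p\neq X$ there, but then your leading-term analysis would have to be redone from scratch in characteristic $p$.

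By contrast, the ``cleaner bookkeeping alternative'' in your last paragraph is the correct proof, and is essentially what lies behind the cited \cite[Theorem 4.1]{BT18}: define $Y_n^{\ZZ}$ by the same presentation over $\ZZ$; the straightening argument shows the ordered monomials span $Y_n^{\ZZ}$; their images in $Y_n^{\ZZ}\otimes_{\ZZ}\QQ$ are linearly independent by the characteristic-zero PBW theorem (where the evaluation-module argument is legitimate), so they form a $\ZZ$-basis of $Y_n^{\ZZ}$; and since presentations commute with base change, $Y_n\cong Y_n^{\ZZ}\otimes_{\ZZ}\kk$ inherits this basis. Two corrections to how you frame this: first, it is exactly the kind of ``appeal to a deformation argument over $\CC$'' that your second paragraph declares forbidden — what positive characteristic actually forbids is the faithfulness of the finite-dimensional evaluation modules, not reduction from characteristic zero; second, your closing clause that ``the faithful modules above are what make the required rank computation honest'' should be deleted, since over $\kk$ those modules are not faithful and the only input the base-change argument needs is PBW over $\QQ$.
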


Define the {\em loop filtration}
\begin{align}\label{loop filtration}
Y_n=\bigcup\limits_{r\geq 0}{\rm F}_r Y_n
\end{align}
by setting $\deg t_{i,j}^{(r)}=r-1$. Denote by $\gr Y_n$ the associated graded algebra of $Y_n$.

The following is a consequence of the PBW theorem (cf. \cite[Lemma 4.2]{BT18}).
\begin{Lemma}\label{Lemma:chi iso}
There exists an isomorphism $\chi:U(\fg)\rightarrow\gr Y_n$ sending $e_{i,j}t^r$ to $\gr_r t_{i,j}^{(r+1)}$ for each $1\leq i,j\leq n$ and $r\geq 0$.
\end{Lemma}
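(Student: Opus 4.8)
The plan is to build $\chi$ as the algebra extension of a Lie algebra homomorphism and then promote it to an isomorphism by comparing Poincar\'e--Birkhoff--Witt bases. First I would define a linear map $\chi_0\colon\fg\to\gr Y_n$ on the basis of $\fg$ by $e_{i,j}t^r\mapsto \gr_r t_{i,j}^{(r+1)}$. Note that $\deg t_{i,j}^{(r+1)}=r$ under the loop filtration \eqref{loop filtration}, so this symbol genuinely lives in $\gr_r Y_n$ and $\chi_0$ is well defined. The crux is to check that $\chi_0$ respects the bracket \eqref{Lie bracket of current Lie}, which amounts to isolating the top-degree part of the relevant RTT commutator. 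Applying \eqref{RTT relations} after the shifts $r\mapsto r+1$ and $s\mapsto s+1$ gives
\[
[t_{i,j}^{(r+1)},t_{k,l}^{(s+1)}]=\sum_{t=0}^{\min(r,s)}\left(t_{k,j}^{(t)}t_{i,l}^{(r+s+1-t)}-t_{k,j}^{(r+s+1-t)}t_{i,l}^{(t)}\right).
\]

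The degree bookkeeping then does the work. For each $t\geq 1$ the corresponding summand is a product of two honest generators, of degrees $t-1$ and $r+s-t$, hence lies in $\mathrm{F}_{r+s-1}Y_n$; only the $t=0$ term, in which $t_{k,j}^{(0)}=\delta_{k,j}$ and $t_{i,l}^{(0)}=\delta_{i,l}$ degenerate to scalars, attains filtration degree $r+s$. Reading off the image in $\gr_{r+s}Y_n$ therefore yields
\[
[\gr_r t_{i,j}^{(r+1)},\gr_s t_{k,l}^{(s+1)}]=\delta_{k,j}\gr_{r+s}t_{i,l}^{(r+s+1)}-\delta_{l,i}\gr_{r+s}t_{k,j}^{(r+s+1)},
\]
which is exactly $\chi_0([e_{i,j}t^r,e_{k,l}t^s])$ by \eqref{Lie bracket of current Lie} (using $\delta_{i,l}=\delta_{l,i}$). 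Thus $\chi_0$ is a Lie algebra homomorphism, and by the universal property of the enveloping algebra it extends uniquely to an algebra homomorphism $\chi\colon U(\fg)\to\gr Y_n$ with the asserted values.

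It remains to verify that $\chi$ is bijective, and this is where the PBW theorem enters. Surjectivity is immediate: the $t_{i,j}^{(r)}$ with $r>0$ generate $Y_n$, so their symbols generate $\gr Y_n$, and all such symbols lie in the image of $\chi$. For injectivity I would compare bases. By Theorem \ref{Theorem: PBW-RTT}, ordered monomials in the $t_{i,j}^{(r)}$ form a basis of $Y_n$; passing to the associated graded for the loop filtration shows that the corresponding ordered monomials in the symbols $\gr_r t_{i,j}^{(r+1)}$ form a basis of $\gr Y_n$. On the other hand, the classical PBW theorem provides a basis of $U(\fg)$ consisting of ordered monomials in the $e_{i,j}t^r$. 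The homomorphism $\chi$ carries the latter basis bijectively onto the former, so it is an isomorphism.

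I expect the main obstacle to be the degree bookkeeping in the bracket computation: one must recognize that the unique top-degree ($r+s$) contribution is the $t=0$ summand, where $t^{(0)}$ collapses to a scalar, and confirm that its indices and signs reproduce \eqref{Lie bracket of current Lie} precisely. A secondary point requiring care is that the two PBW orderings be chosen compatibly, so that $\chi$ matches basis element to basis element within each filtration degree and the isomorphism is seen degree by degree.
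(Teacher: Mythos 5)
Your proof is correct and takes essentially the same approach the paper intends: the paper offers no independent argument for Lemma \ref{Lemma:chi iso}, deferring to the PBW theorem (Theorem \ref{Theorem: PBW-RTT}) via \cite[Lemma 4.2]{BT18}, whose proof is exactly your route --- isolate the top loop-degree ($t=0$) part of the shifted RTT relation \eqref{RTT relations} to see that the symbols $\gr_r t_{i,j}^{(r+1)}$ satisfy the current-algebra bracket \eqref{Lie bracket of current Lie}, extend by the universal property to a surjection $U(\fg)\to\gr Y_n$, and conclude injectivity by matching the classical PBW basis of $U(\fg)$ with the basis of $\gr Y_n$ obtained from Theorem \ref{Theorem: PBW-RTT}. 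Your degree bookkeeping and the remark about choosing the two monomial orderings compatibly are precisely the points that argument rests on, so nothing is missing.
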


\subsection{Gauss decomposition and quasideterminants}\label{subsection name:Gauss decomp}
Let $\mu=(\mu_1,\dots,\mu_m)$ be a composition of $n$. 
Since the leading minors of the $n\times n$ matrix $T(u)$ are invertible,
it possesses a {\em Gauss factorization}
\begin{align}\label{gauss decomp}
  T(u)=F(u)D(u)E(u)  
\end{align}
for unique {\em block matrices} $D(u)$, $E(u)$ and $F(u)$ of the form
$$
D(u) = \left(
\begin{array}{cccc}
D_{1}(u) & 0&\cdots&0\\
0 & D_{2}(u) &\cdots&0\\
\vdots&\vdots&\ddots&\vdots\\
0&0 &\cdots&D_{m}(u)
\end{array}
\right),
$$

$$
E(u) =
\left(
\begin{array}{cccc}
I_{\mu_1} & E_{1,2}(u) &\cdots&E_{1,m}(u)\\
0 & I_{\mu_2} &\cdots&E_{2,m}(u)\\
\vdots&\vdots&\ddots&\vdots\\
0&0 &\cdots&I_{\mu_{m}}
\end{array}
\right),
F(u) = \left(
\begin{array}{cccc}
I_{\mu_1} & 0 &\cdots&0\\
F_{2,1}(u) & I_{\mu_2} &\cdots&0\\
\vdots&\vdots&\ddots&\vdots\\
F_{m,1}(u)&F_{m,2}(u) &\cdots&I_{\mu_{m}}
\end{array}
\right),
$$
where
\[
D_a(u)=\big(D_{a;i,j}(u)\big)_{1 \leq i,j \leq \mu_a},
\]
\[
E_{a,b}(u)=\big(E_{a,b;i,j}(u)\big)_{1 \leq i \leq \mu_a, 1 \leq j \leq \mu_b},
\]
\[
F_{b,a}(u)=\big(F_{b,a;i,j}(u)\big)_{1 \leq i \leq \mu_b, 1 \leq j \leq \mu_a}
\]
are $\mu_a \times \mu_a$,
$\mu_a \times \mu_b$
and $\mu_b \times\mu_a$ matrices, respectively. 
Also define the $\mu_a\times\mu_a$ matrix
$D_a^{\prime}(u)=\big(D_{a;i,j}^{\prime}(u)\big):=(D_a(u)\big)^{-1}$
This defines power series
\[
D_{a;i,j}(u) =\sum_{r \geq 0} D_{a;i,j}^{(r)}u^{-r},~
D'_{a;i,j}(u) =\sum_{r \geq 0} D'^{(r)}_{a;i,j}u^{-r}
\]
and
\[
E_{a,b;i,j}(u)=\sum_{r \geq 1} E_{a,b;i,j}^{(r)} u^{-r},~
F_{b,a;i,j}(u) =\sum_{r \geq 1} F_{b,a;i,j}^{(r)} u^{-r}.
\]
Let 
\[
E_{a;i,j}(u)=\sum_{r \geq 1} E_{a;i,j}^{(r)}u^{-r}:=E_{a,a+1;i,j}(u),~
F_{a;i,j}(u) =\sum_{r \geq 1} F_{a;i,j}^{(r)}u^{-r}:=F_{a+1,a;i,j}(u)
\]
for short.
When necessary, we will
add an additional superscript $\mu$ to our notation to avoid any ambiguity as $\mu$ varies.
In terms of quasideterminants of \cite{GR97},
we have the following more explicit descriptions, as noted already in \cite[Section 6]{BK05}.
We write the matrix $T(u)$ in block form as
\[
T(u) = \left(
\begin{array}{lll}
{^\mu}T_{1,1}(u)&\cdots&{^\mu}T_{1,m}(u)\\
\vdots&\ddots&\cdots\\
{^\mu}T_{m,1}(u)&\cdots&{^\mu}T_{m,m}(u)\\
\end{array}
\right),
\]
 where each ${^\mu}T_{a,b}(u)$ is a $\mu_a \times \mu_b$ matrix.
Then
\begin{align}\label{quasiD}
D_a(u) =
\left|
\begin{array}{cccc}
{^\mu}T_{1,1}(u) & \cdots & {^\mu}T_{1,a-1}(u)&{^\mu}T_{1,a}(u)\\
\vdots & \ddots &\vdots&\vdots\\
{^\mu}T_{a-1,1}(u)&\cdots&{^\mu}T_{a-1,a-1}(u)&{^\mu}T_{a-1,a}(u)\\
{^\mu}T_{a,1}(u) & \cdots & {^\mu}T_{a,a-1}(u)&
\hbox{\begin{tabular}{|c|}\hline${^\mu}T_{a,a}(u)$\\\hline\end{tabular}}
\end{array}
\right|,
\end{align}
\begin{align}\label{quasiE}
E_{a,b}(u) =
D^{\prime}_a(u)
\left|\begin{array}{cccc}
{^\mu}T_{1,1}(u) & \cdots &{^\mu}T_{1,a-1}(u)& {^\mu}T_{1,b}(u)\\
\vdots & \ddots &\vdots&\vdots\\
{^\mu}T_{a-1,1}(u) & \cdots & {^\mu}T_{a-1,a-1}(u)&{^\mu}T_{a-1,b}(u)\\
{^\mu}T_{a,1}(u) & \cdots & {^\mu}T_{a,a-1}(u)&
\hbox{\begin{tabular}{|c|}\hline${^\mu}T_{a,b}(u)$\\\hline\end{tabular}}
\end{array}
\right|,
\end{align}
\begin{align}\label{quasiF}
F_{b,a}(u) =
\left|
\begin{array}{cccc}
{^\mu}T_{1,1}(u) & \cdots &{^\mu}T_{1,a-1}(u)& {^\mu}T_{1,a}(u)\\
\vdots & \ddots &\vdots&\vdots\\
{^\mu}T_{a-1,1}(u) & \cdots & {^\mu}T_{a-1,a-1}(u)&{^\mu}T_{a-1,a}(u)\\
{^\mu}T_{b,1}(u) & \cdots & {^\mu}T_{b,a-1}(u)&
\hbox{\begin{tabular}{|c|}\hline${^\mu}T_{b,a}(u)$\\\hline\end{tabular}}
\end{array}
\right|D^{\prime}_a(u).
\end{align}
By the above, we immediately have 
$E_{b-1;i,j}^{(1)}=T_{b-1,b;i,j}^{(1)}$ and $F_{b-1;i,j}^{(1)}=T_{b,b-1;i,j}^{(1)}$ for all admissible $b, i, j$,
By induction, one may show that for each pair $a,b$ such that $1<a+1<b\leq m-1$ and $1\leq i\leq \mu_a$, $1\leq j\leq \mu_b$, we have
\begin{align}\label{EFabij r induction formula}
E_{a,b;i,j}^{(r)}=[E_{a,b-1;i,k}^{(r)},E_{b-1;k,j}^{(1)}],~F_{b,a;j,i}^{(r)}=[F_{b-1;j,k}^{(1)},F_{b-1,a;k,i}^{(r)}],
\end{align}
for any $1\leq k\leq\mu_{b-1}$.

By multiplying out the matrix $T(u)=F(u)D(u)E(u)$,
we see that each $t_{i,j}^{(r)}$ can be expressed as a sum of monomials in $D_{a;i,j}^{(r)}$,
$E_{a,b;i,j}^{(r)}$ and $F_{b,a;i,j}^{(r)}$ in a certain order with all $F$'s before $D$'s and all $D$'s before $E$'s.
By \eqref{EFabij r induction formula},
it is enough to use $D_{a;i,j}^{(r)}$,
$E_{a;i,j}^{(r)}$ and $F_{a;i,j}^{(r)}$ only. Consequently, we have the following theorem
\begin{Theorem}\label{Theorem: yn gb parabolic generators}
The Yangian $Y_n$ is generated as an algebra by the elements
$\{D_{a;i,j}^{(r)};~1 \leq a \leq m,  1 \leq i,j \leq \mu_a,r \geq 0\}$,
$\{E_{a;i,j}^{(r)};~1 \leq a < m, 1 \leq i \leq \mu_a,1 \leq j \leq \mu_{a+1}, r \geq 1\}$ and
$\{F_{a;i,j}^{(r)};~1 \leq a < m, 1 \leq i \leq \mu_{a+1},
1 \leq j \leq \mu_a, r \geq 1\}$.
\end{Theorem}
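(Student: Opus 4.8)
The plan is to show that every RTT generator $t_{i,j}^{(r)}$ lies in the subalgebra $Y'\subseteq Y_n$ generated by the three listed families $\{D_{a;i,j}^{(r)}\}$, $\{E_{a;i,j}^{(r)}\}$ and $\{F_{a;i,j}^{(r)}\}$. Since the $t_{i,j}^{(r)}$ generate $Y_n$ by the defining RTT presentation, this forces $Y'=Y_n$ and proves the theorem.

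First I would verify that all the \emph{long} off-diagonal Gauss coefficients already belong to $Y'$. By definition $E_{a;i,j}^{(r)}$ and $F_{a;i,j}^{(r)}$ are the coefficients of the nearest-neighbour blocks $E_{a,a+1}(u)$ and $F_{a+1,a}(u)$, and the recursion \eqref{EFabij r induction formula} writes each $E_{a,b;i,j}^{(r)}$ and $F_{b,a;j,i}^{(r)}$ with $b\geq a+2$ as a commutator involving a block one step shorter together with a nearest-neighbour coefficient. As $Y'$ is a subalgebra it is closed under commutators, so an induction on $b-a$ yields $E_{a,b;i,j}^{(r)},\,F_{b,a;j,i}^{(r)}\in Y'$ for all admissible $a<b$ and all $r$.

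Next I would expand the Gauss factorisation \eqref{gauss decomp} in block form: writing $i$ in block $p$ and $j$ in block $q$, the corresponding block of $T(u)=F(u)D(u)E(u)$ is ${^\mu}T_{p,q}(u)=\sum_{a=1}^{\min(p,q)}F_{p,a}(u)D_a(u)E_{a,q}(u)$, with the conventions $F_{p,p}(u)=I_{\mu_p}$ and $E_{q,q}(u)=I_{\mu_q}$. Because $T(u)=I_n+O(u^{-1})$ forces $D_a(u)=I_{\mu_a}+O(u^{-1})$ while every off-diagonal $F$- and $E$-series begins in degree $u^{-1}$, the coefficient of $u^{-r}$ in any entry is a finite sum of products of coefficients $F_{p,a;k,l}^{(s)}$, $D_{a;k,l}^{(s)}$ and $E_{a,q;k,l}^{(s)}$. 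All of these lie in $Y'$ by the previous step, so $t_{i,j}^{(r)}\in Y'$ for every $i,j,r$, and the argument is complete.

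The step I expect to cost the most is not the generation argument itself but the recursion \eqref{EFabij r induction formula} on which it rests. Establishing that recursion requires the precise commutation relations among the nearest-neighbour Gauss generators, extracted from the quasideterminant formulas \eqref{quasiE}--\eqref{quasiF} (equivalently, from the relation \eqref{tiju tklv relation} transported through the Gauss decomposition); one must check that the commutators close at each level $b-a$ and that the right-hand sides are independent of the auxiliary index $k$. Granting that, the multiply-out is routine, the only delicate point being the finiteness of each coefficient, which is guaranteed by the $u^{-1}$-valuations of the factors.
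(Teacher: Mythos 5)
Your proposal is correct and takes essentially the same route as the paper: the paper's proof likewise multiplies out the Gauss factorization $T(u)=F(u)D(u)E(u)$ to write each $t_{i,j}^{(r)}$ as a sum of monomials in the coefficients of the $F$'s, $D$'s and $E$'s, and then invokes the recursion \eqref{EFabij r induction formula} to reduce the long-range coefficients $E_{a,b;i,j}^{(r)}$, $F_{b,a;j,i}^{(r)}$ to the nearest-neighbour generators. Your only deviation is to perform these two steps in the opposite order (closure of the subalgebra under the commutator recursion first, block expansion second), which is immaterial.
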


The above generators are called {\em parabolic generators}.
In the special case when $\mu=(n)$, 
we have $T(u) ={^\mu}T_{1,1}(u)$. 
The corresponding parabolic generators are exactly the original RTT generators in subsection \ref{subsection:rtt generators}. 
At another extreme, for $\mu =(1,\dots,1)$, 
the parabolic generators are called the {\em Drinfeld generators}. 
In this case, all $D_a(u)$’s, $E_{a,b}(u)$'s and $F_{b,a}(u)$'s are $1\times1$ matrices. 
For conciseness,
we will use $d_a(u)$, $e_{a,b}(u)$, $e_a(u)$, $f_{b,a}(u)$ and $f_a(u)$ instead of $D_a(u)$, $E_{a,b}(u)$, $E_{a,a+1}(u)$, $F_{b,a}(u)$ and $F_{a+1,a}(u)$, respectively.
\subsection{Shift map}
Using the RTT relation (cf. \cite[(2.4)]{BK05}), one can show that the map $\omega_n:Y_n\rightarrow Y_n;~T(u)\mapsto T(-u)^{-1}$ is an algebra automorphism (see \cite[Proposition 1.12(iii)]{MNO96}).
For any $k\geq 0$, we let $\varphi_k:Y_n\hookrightarrow Y_{k+n}$ denote the obvious injective algebra homomorphism 
mapping $t_{i,j}^{(r)}\in Y_n$ to $t_{k+i,k+j}^{(r)}\in Y_{k+n}$.
Then the map $\psi_k:Y_n\rightarrow Y_{k+n}$ defined by $\psi_k:=\omega_{k+n}\circ\varphi_k\circ\omega_n$ is an injective algebra homomorphism (cf. \cite[Section 4]{BK05}). 
We call $\psi_k$ the {\em shift map}.

We record the following Lemma, 
where the proof given in characteristic zero in \cite{BK05} works as well in positive characteristic.
\begin{Lemma}\label{Lemma:property of psi}
Let $\psi$ be the shift map. Then the following statements hold:
\begin{enumerate}
\item[(1)] For $k,l\geq 1$, we have that
\begin{align}\label{psik d}
d_{k+l}(u)=\psi_{k}(d_l(u)),
\end{align}
\begin{align}\label{psik e}
 e_{k+l}(u)=\psi_{k}(e_l(u)),
 \end{align}
\begin{align}\label{psik f}
f_{k+l}(u)=\psi_{k}(f_l(u))
 \end{align}
\item[(2)] Fix $a\geq 1$ and let $\bar{\mu}:=(\mu_a,\mu_{a+1},\dots,\mu_m)$.
Then for all admissible $i,j$, we have that 
\begin{align}\label{psik D}
{^\mu}D_{a;i,j}(u)=\psi_{\mu_1+\cdots+\mu_{a-1}}({^{\bar{\mu}}}D_{1;i,j}(u)).
 \end{align}
 \begin{align}\label{psik E}
{^\mu}E_{a;i,j}(u)=\psi_{\mu_1+\cdots+\mu_{a-1}}({^{\bar{\mu}}}E_{1;i,j}(u)).
 \end{align}
 \begin{align}\label{psik F}
{^\mu}F_{a;i,j}(u)=\psi_{\mu_1+\cdots+\mu_{a-1}}({^{\bar{\mu}}}F_{1;i,j}(u)).
 \end{align}
\end{enumerate}
\end{Lemma}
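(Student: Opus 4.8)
The plan is to follow the characteristic-zero argument of \cite[\S4,\S6]{BK05} and to check that each ingredient is insensitive to the characteristic. The only tools involved are the RTT relations \eqref{RTT relations}, the uniqueness of the Gauss factorization \eqref{gauss decomp}, and the quasideterminant identities of \cite{GR97}; all of these are formal identities valid over an arbitrary commutative ring, so nothing is lost by passing to $\kk$ with $\Char(\kk)=p>2$. As a preliminary simplification I would record the multiplicativity of the shift maps: since $\omega_n$ is an involution and the corner embeddings compose additively ($\varphi_{k'}\circ\varphi_k=\varphi_{k+k'}$), the definition $\psi_k=\omega_{k+n}\circ\varphi_k\circ\omega_n$ gives $\psi_{k'}\circ\psi_k=\psi_{k+k'}$ after cancelling the inner pair $\omega_{k+n}\circ\omega_{k+n}=\id$. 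This lets one reduce the inductive burden in part (1) to a single shift.

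For part (1) the heart of the matter is to compute the images $\omega_n(d_l(u))$, $\omega_n(e_l(u))$ and $\omega_n(f_l(u))$. Applying $\omega_n$ to $T(u)=F(u)D(u)E(u)$ produces
\[
T(-u)^{-1}=E(-u)^{-1}D(-u)^{-1}F(-u)^{-1},
\]
and I would re-express the leading quasiminors defining the Gauss generators of this matrix in terms of those of $T(-u)$ by means of the noncommutative Jacobi ratio theorem of \cite{GR97}, which relates the leading quasiminors of $M^{-1}$ to the complementary trailing quasiminors of $M$. Feeding the result through the corner embedding $\varphi_k$, which shifts all row and column indices by $k$ and hence shifts the relevant trailing quasiminors accordingly, and then applying $\omega_{k+n}$ to undo the inversion, should yield exactly the Gauss generators $d_{k+l}(u)$, $e_{k+l}(u)$ and $f_{k+l}(u)$ of $Y_{k+n}$.

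For part (2) I would run the same computation at the level of blocks. The quasideterminant descriptions \eqref{quasiD}, \eqref{quasiE}, \eqref{quasiF} present ${}^{\mu}D_a(u)$, ${}^{\mu}E_a(u)$ and ${}^{\mu}F_a(u)$ as block quasiminors built from the leading block rows and columns of $T(u)$, whereas ${}^{\bar{\mu}}D_1(u)={}^{\bar{\mu}}T_{1,1}(u)$ is simply the top-left $\mu_a\times\mu_a$ block of the matrix of $Y_{n-p_a(\mu)}$. The block form of the Jacobi ratio theorem, combined with the shift $\psi_{p_a(\mu)}$, then shows that $\psi_{p_a(\mu)}$ sends this top-left block onto the $a$-th diagonal block quasideterminant of $Y_n$, which is the $D$-case; the $E$- and $F$-cases follow by the identical manipulation of \eqref{quasiE} and \eqref{quasiF}.

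The step I expect to be the main obstacle is the bookkeeping in the two applications of $\omega$. The inversion $M\mapsto M^{-1}$ interchanges the upper-triangular and lower-triangular Gauss factors and reverses the order in which rows and columns are eliminated, so passing from $T(u)$ to $T(-u)^{-1}$ both swaps the roles of $E$ and $F$ and replaces leading quasiminors by trailing ones under the reflection $i\mapsto n+1-i$. One must check that the outer $\omega_{k+n}$ undoes this reflection and the $E\leftrightarrow F$ swap precisely onto the index $k+l$ (respectively, in the parabolic setting, onto the block labelled $a$, with the blocks of $\bar{\mu}$ matching the trailing blocks of $\mu$). Once these reflection identities are pinned down, the remaining verifications are purely formal and manifestly characteristic-free.
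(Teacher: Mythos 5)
Your proposal is correct and is essentially the paper's own proof: the paper gives no independent argument but simply asserts that the characteristic-zero proof of \cite{BK05} carries over, and that proof is exactly the quasideterminant computation you outline, namely tracking the Gauss factors through $\psi_k=\omega_{k+n}\circ\varphi_k\circ\omega_n$ via the Gelfand--Retakh identities, which are formal and characteristic-free. The only cosmetic difference is organizational: \cite{BK05} first expresses $\psi_k(t_{i,j}(u))$ as a single quasiminor bordered by the leading $k\times k$ corner and then applies the noncommutative Sylvester identity to match \eqref{quasiD}--\eqref{quasiF}, whereas you apply the Jacobi ratio theorem at each of the two inversions separately, with the same bookkeeping cancelling in the end.
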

\subsection{Parabolic presentation}
The following theorem is the modular statement of \cite[Theorem A]{BK05}.
\begin{Theorem}\label{Theorem:parabolic presentation}
The algebra $Y_n$ is generated by elements
$\{D_{a;i,j}^{(r)},{D}'^{(r)}_{a;i,j};~1 \leq a \leq m,  1 \leq i,j \leq \mu_a,r \geq 0\}$,
$\{E_{a;i,j}^{(r)};~1 \leq a < m, 1 \leq i \leq \mu_a,1 \leq j \leq \mu_{a+1}, r \geq 1\}$ and
$\{F_{a;i,j}^{(r)};~1 \leq a < m, 1 \leq i \leq \mu_{a+1},
1 \leq j \leq \mu_a, r \geq 1\}$ subject only to the following relations:
\begin{align}\label{pr1}
D_{a;i,j}^{(0)}= \delta_{i,j},
\end{align}
\begin{align}\label{pr2}
\sum_{t=0}^r D_{a;i,\alpha}^{(t)}D'^{(r-t)}_{a;\alpha,j}=\delta_{r,0}\delta_{i,j},
\end{align}
\begin{align}\label{pr3}
[D_{a;i,j}^{(r)}, D_{b;k,l}^{(s)}] 
=\delta_{a,b}
\sum_{t=0}^{\min(r,s)-1}
\left(
D_{a;i,l}^{(r+s-1-t)}D_{a;k,j}^{(t)}
-D_{a;i,l}^{(t)}D_{a;k,j}^{(r+s-1-t)} \right),
\end{align}
\begin{align}\label{pr4}
[E_{a;i,j}^{(r)},F_{b;k,l}^{(s)}]=-\delta_{a,b} \sum_{t=0}^{r+s-1}
D'^{(t)}_{a;i,l}D_{a+1;k,j}^{(r+s-1-t)},
\end{align}
\begin{align}\label{pr5}
[D_{a;i,j}^{(r)}, E_{b;k,l}^{(s)}] =\delta_{a,b}\sum_{t=0}^{r-1}D_{a;i,\alpha}^{(t)} E_{a;\alpha,l}^{(r+s-1-t)}\delta_{k,j}
-\delta_{a,b+1}\sum_{t=0}^{r-1}D_{b+1;i,l}^{(t)} E_{b;k,j}^{(r+s-1-t)},
\end{align}
\begin{align}\label{pr6}
[D_{a;i,j}^{(r)}, F_{b;k,l}^{(s)}]=
\delta_{a,b+1} \sum_{t=0}^{r-1}
 F_{b;i,l}^{(r+s-1-t)}D_{b+1;k,j}^{(t)}-\delta_{a,b}
\delta_{i,l}\sum_{t=0}^{r-1}
F_{a;k,\alpha}^{(r+s-1-t)}D_{a;\alpha,j}^{(t)},
\end{align}
\begin{align}\label{pr7}
[E_{a;i,j}^{(r)}, E_{a;k,l}^{(s)}]
=\sum_{t=1}^{s-1} E_{a;i,l}^{(t)} E_{a;k,j}^{(r+s-1-t)}
-\sum_{t=1}^{r-1} E_{a;i,l}^{(t)} E_{a;k,j}^{(r+s-1-t)},
\end{align}
\begin{align}\label{pr8}
[F_{a;i,j}^{(r)}, F_{a;k,l}^{(s)}]
=\sum_{t=1}^{r-1} F_{a;i,l}^{(r+s-1-t)}F_{a;k,j}^{(t)}-
\sum_{t=1}^{s-1} F_{a;i,l}^{(r+s-1-t)}F_{a;k,j}^{(t)},
\end{align}
\begin{align}\label{pr9}
[E_{a;i,j}^{(r+1)}, E_{a+1;k,l}^{(s)}]
-[E_{a;i,j}^{(s)}, E_{a+1;k,l}^{(r+1)}]
=\delta_{k,j}E_{a;i,\beta}^{(r)} E_{a+1;\beta,l}^{(s)},
\end{align}
\begin{align}\label{pr10}
[F_{a;i,j}^{(r)}, F_{a+1;k,l}^{(s+1)}]
-[F_{a;i,j}^{(r+1)}, F_{a+1;k,l}^{(s)}]
=\delta_{i,l}F_{a+1;k,\beta}^{(s)}F_{a;\beta,j}^{(r)},
\end{align}
\begin{align}\label{pr11}
[E_{a;i,j}^{(r)}, E_{b;k,l}^{(s)}] = 0
\quad\text{ if $b>a+1$ or if $b=a+1$ and $k\neq j$},
\end{align}
\begin{align}\label{pr12}
[F_{a;i,j}^{(r)}, F_{b;k,l}^{(s)}] = 0
\quad\text{ if $b>a+1$ or if $b=a+1$ and $i\neq l$},
\end{align}
\begin{align}\label{pr13}
[E_{a;i,j}^{(r)}, [E_{a;k,l}^{(s)}, E_{b;f,g}^{(t)}]]
+
[E_{a;i,j}^{(s)}, [E_{a;k,l}^{(r)}, E_{b;f,g}^{(t)}]] = 0
\quad\:\:\text{ if }|a-b|=1,
\end{align}
\begin{align}\label{pr14}
[F_{a;i,j}^{(r)}, [F_{a;k,l}^{(s)}, F_{b;f,g}^{(t)}]]
+
[F_{a;i,j}^{(s)}, [F_{a;k,l}^{(r)}, F_{b;f,g}^{(t)}]] = 0
\:\:\quad\text{ if }|a-b|=1,
\end{align}
for all admissible $a,b,f,g,i,j,k,l,r,s,t$.
(By convention the index $\alpha$ resp. $\beta$ appearing here should be summed over
$1,\dots,\mu_a$ resp. $1,\dots,\mu_{a+1}$.)
\vspace{2mm}   
\end{Theorem}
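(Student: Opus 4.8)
The plan is to follow the strategy of \cite{BK05}, replacing each use of the characteristic-zero PBW theorem by its modular counterpart, Theorem~\ref{Theorem: PBW-RTT}, together with the isomorphism $\chi$ of Lemma~\ref{Lemma:chi iso}. Write $\hat{Y}_n$ for the abstract algebra presented by the listed generators modulo \eqref{pr1}--\eqref{pr14}. Since the Gauss generators generate $Y_n$ (Theorem~\ref{Theorem: yn gb parabolic generators}), once the relations are known to hold in $Y_n$ there is a canonical surjection $\pi\colon \hat{Y}_n\twoheadrightarrow Y_n$ sending each abstract generator to the corresponding element of the Gauss decomposition, and the entire content of the theorem is that $\pi$ is injective.

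To see that \eqref{pr1}--\eqref{pr14} hold in $Y_n$ I would argue by integrality. The RTT relations \eqref{RTT relations} and the Gauss factorization \eqref{gauss decomp} involve no denominators---the blocks $D'_a(u)$ being determined integrally through \eqref{pr2}---so all parabolic generators lie in the $\ZZ$-form $Y_n^\ZZ$, and the identities proved in \cite{BK05} over $\CC$ are obtained by manipulations valid over any commutative ring. Reducing them modulo $p$ through $Y_n^\ZZ\hookrightarrow Y_n^\QQ\hookrightarrow Y_n^\CC$ and $Y_n=Y_n^\ZZ\otimes_\ZZ\kk$ yields the relations in $Y_n$. Where a direct verification is wanted instead, the shift map of Lemma~\ref{Lemma:property of psi} reduces any relation involving only the blocks $a,a+1$ to the two-block case, and \eqref{EFabij r induction formula} expresses the non-adjacent generators $E_{a,b}, F_{b,a}$ as iterated commutators of adjacent ones.

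The substantive part is the injectivity of $\pi$. I would equip $\hat{Y}_n$ with the filtration in which $D_{a;i,j}^{(r)}$, $E_{a;i,j}^{(r)}$ and $F_{a;i,j}^{(r)}$ all have degree $r-1$, imitating the loop filtration \eqref{loop filtration}, and fix the normal ordering that places all $F$'s before all $D$'s before all $E$'s. The crucial step is a straightening lemma: using the quadratic relations \eqref{pr3}--\eqref{pr8} as commutation rules and the Serre-type relations \eqref{pr9}, \eqref{pr10}, \eqref{pr13}, \eqref{pr14} to control the non-adjacent generators produced along the way, every monomial in the generators can be rewritten, modulo terms of lower filtered degree, as a $\kk$-linear combination of ordered monomials. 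Hence the ordered monomials span $\hat{Y}_n$.

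To finish I would compare this spanning set with the modular PBW theorem. Passing to $\gr Y_n$, the leading symbol of each generator is the matching current-algebra element, $\gr_{r-1}D_{a;i,j}^{(r)}=\chi(e_{a,a;i,j}t^{r-1})$ and similarly for $E$, $F$ and the induced non-adjacent blocks, so an ordered monomial in the generators has leading symbol a PBW monomial in the basis $\{e_{a,b;i,j}t^{r}\}$ of $\fg$. These are linearly independent in $U(\fg)\cong\gr Y_n$ by Lemma~\ref{Lemma:chi iso}, so their $\pi$-images are linearly independent in $Y_n$; a spanning set mapping to a linearly independent set must be mapped injectively, so $\pi$ is an isomorphism. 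The main obstacle is the straightening lemma above: one must confirm that \eqref{pr3}--\eqref{pr14} really do suffice to reorder arbitrary monomials and to reabsorb the higher non-adjacent generators without creating new independent symbols, and---this being the positive-characteristic point---that every reordering step is integral, so that no coefficient ever requires inverting an element of $\kk$ and the characteristic-zero bookkeeping of \cite{BK05} transfers intact.
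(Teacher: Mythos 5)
Your overall architecture coincides with the paper's: both arguments first verify that \eqref{pr1}--\eqref{pr14} hold in $Y_n$ (you by specialization from a $\ZZ$-form through $Y_n=Y_n^\ZZ\otimes_\ZZ\kk$, the paper by observing that the computations of \cite{BK05}, organized via low rank plus the shift and transposition maps, go through in characteristic $p$ --- both routes are legitimate), and then prove injectivity of the surjection $\widehat{Y}_n\twoheadrightarrow Y_n$ by filtering $\widehat{Y}_n$ with $\deg = r-1$, straightening into ordered monomials (including the higher root elements defined by \eqref{EFabij r induction formula}), and comparing leading symbols with the PBW basis of $\gr Y_n\cong U(\fg)$ from Lemma \ref{Lemma:chi iso}. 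The paper's version of your ``straightening lemma'' is the identity \eqref{gryn+relation} in $\gr\widehat{Y}_n^+$, imported from \cite[Lemma 6.7]{BK05}.

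There is, however, one concrete flaw in your closing claim that ``every reordering step is integral, so that no coefficient ever requires inverting an element of $\kk$.'' Two things go wrong. First, your integrality argument only establishes relations in $Y_n$; inside the abstract algebra $\widehat{Y}_n$ you may use nothing but formal consequences of \eqref{pr1}--\eqref{pr14}, so the characteristic-zero bookkeeping of \cite{BK05} does not transfer by specialization --- it must be re-derived from the listed relations. Second, exactly one such derivation fails to be integral: the straightening of the positive part needs the cubic Serre relation with \emph{equal} superscripts, $[E_{a;i,j}^{(r)},[E_{a;k,l}^{(r)},E_{b;f,g}^{(t)}]]=0$ for $|a-b|=1$, and setting $r=s$ in \eqref{pr13} yields only $2\,[E_{a;i,j}^{(r)},[E_{a;k,l}^{(r)},E_{b;f,g}^{(t)}]]=0$. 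Dividing by $2$ is legitimate only because of the paper's standing hypothesis $p>2$; this is precisely the content of the Remark following the theorem, which derives \eqref{cubic serre 1ge relation} by setting $v=u$ in \eqref{cubic serre 2ge relation} and then extracts \eqref{cubic serre 1ge relation-coeff}. For $p=2$ the listed relations are insufficient and an additional relation must be imposed, exactly as happens for the modular Drinfeld presentation in \cite{BT18}. So your proof is repaired by invoking $p>2$ at this single step rather than integrality; with that correction it agrees with the paper's argument.
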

\begin{proof}
We first need to check that the relations \eqref{pr1}-\eqref{pr14} do indeed hold in $Y_n$.
These were proved in \cite{BK05} over the ground field $\CC$.
These calculations can be performed without any difference in positive characteristic,
see also the proof of \cite[Theorem 4.3]{BT18}.
The arguments start from the low rank cases, 
then transfer the corresponding relations to the general case of $Y_n$ by using the shift map and the transposition map (see Subsection \ref{section: auto} below).
For example, to establish \eqref{pr13},
we use the power series relation
\begin{align}\label{cubic serre 2ge relation}
[E_{a;i,j}(u),[E_{a;k,l}(v),E_{b;g,h}(w)]]+[E_{a;i,j}(v),[E_{a;k,l}(u),E_{b;g,h}(w)]]=0
\end{align}
for $|a-b|=1$.
Using the shift map, 
it is enough to prove \eqref{cubic serre 2ge relation} for the case $\{a,b\}=\{1,2\}$,
then argue as in \cite[Lemma 6.6]{BK05}.

Now let $\widehat{Y}_n$ be the algebra with generators and relations as in the statement of the theorem.
We may further define higher root elements $E_{a,b;i,j}^{(r)}$, $F_{b,a;j,i}^{(r)}\in\widehat{Y}_n$ by equations \eqref{EFabij r induction formula}.
Let $\theta:\widehat{Y}_n\rightarrow Y_n$ be the map sending every element in $\widehat{Y}_n$ into the element in $Y_n$ with the same name.
The previous paragraph implies that $\theta$ is a well-defined surjective homomorphism.
Therefore, it remains to prove that $\theta$ is also injective.

Let $\widehat{Y}_n^+$ denote the subalgebra of $\widehat{Y}_n$ generated by the elements $\{E_{a,b;i,j}^{(r)}\}$.
Define a filtration on $\widehat{Y}_n^+$ by declaring that the elements $E_{a,b;i,j}^{(r)}$ are of filtered degree $r-1$,
and denote by $\gr \widehat{Y}_n^+$ the corresponding graded algebra.
Let $e_{a,b;i,j}^{(r)}:=\gr_{r}E_{a,b;i,j}^{(r+1)}$.
One can show that the following identity holds in $\gr \widehat{Y}_n^+$.
\begin{align}\label{gryn+relation}
[e_{a,b;i,j}^{(r)},e_{c,d;k,l}^{(s)}]=\delta_{b,c}\delta_{k,j}e_{a,d;i,l}^{(r+s)}-\delta_{d,a}\delta_{l,i}e_{c,b;k,j}^{(r+s)}
\end{align}
for $1\leq a<b\leq m$, $1\leq c<d\leq m$, $r,s\geq 0$ and all admissible $i,j,k,l$ (\cite[Lemma 6.7]{BK05}).
Then using \eqref{gryn+relation} one obtains that $\widehat{Y}_n$ is spanned as a vector space by the monomials in the elements $\{D_{a;i,j}^{(r)}, E_{a,b;i,j}^{(r)},F_{b,a;j,i}^{(r)}\}$ taken in some fixed order. By Lemma \ref{Lemma:chi iso},
we can identify the graded algebra $\gr Y_n$ with $U(\fg)$.
We apply the identification in conjunction with the PBW theorem for $U(\fg)$ to see that the images of the monomials in the elements $\{D_{a;i,j}^{(r)}, E_{a,b;i,j}^{(r)},F_{b,a;j,i}^{(r)}\}$ under $\theta$ are linearly independent (cf. \cite[Lemma 5.10]{BK05} and \cite[Proposition 8.4]{Peng11}).
It follows that $\theta$ is injective.
\end{proof}

\begin{Remark}
Note that our characteristic $p>2$.
Set $v:=u$ in \eqref{cubic serre 2ge relation}, then replace $w$ by $v$, we get 
\begin{align}\label{cubic serre 1ge relation}
[E_{a;i,j}(u),[E_{a;k,l}(u),E_{b;g,h}(v)]]=0 \quad\:\:\text{ if }|a-b|=1.
\end{align}
This can be obtained from \cite[Lemma 6.5]{BK05} in conjunction with the shift map.
Taking the $u^{-2r}v^{-t}$-coefficient in \eqref{cubic serre 1ge relation} and using also \eqref{pr13} gives
\begin{align}\label{cubic serre 1ge relation-coeff}
  [E_{a;i,j}^{(r)}, [E_{a;k,l}^{(r)}, E_{b;f,g}^{(t)}]]=0
\quad\:\:\text{ if }|a-b|=1.
\end{align}
\end{Remark}

Recall that by Lemma \ref{Lemma:chi iso}, 
we may identify $e_{i,j}t^r$ with $\gr_r t_{i,j}^{(r+1)}$. 
Using \eqref{quasiD}-\eqref{quasiF}, one sees that $D_{a;i,j}^{(r+1)}$, $E_{a,b;i,j}^{(r+1)}$ and $F_{b,a;i,j}^{(r+1)}$ are all belong to ${\rm F}_r Y_n$,
and under our identification we have that 
\begin{equation}\label{identification}
e_{a,b;i,j}t^r=e_{p_a(\mu)+i,p_b(\mu)+j}t^{r}
=\left\{
\begin{array}{ll}
\gr_{r}D_{a;i,j}^{(r+1)}&\text{if $a=b$,}\\
\gr_{r}E_{a,b;i,j}^{(r+1)}&\text{if $a<b$,}\\
\gr_{r}F_{a,b;i,j}^{(r+1)}&\text{if $a>b$.}
\end{array}\right.
\end{equation}

Using the PBW theorem for $U(\fg)$,
we obtain the PBW basis for $Y_n$ in terms of the parabolic generators (cf. \cite[Theorem B]{BK05}).
\begin{Theorem}\label{Theorem:PBW parabolic generators}
Let 
\begin{align*}
I&:=\{D_{a;i,j}^{(r)};~1\leq a\leq m, 1\leq i,j\leq \mu_a, r>0\},\\
J&:=\{E_{a,b;i,j}^{(r)};~1\leq a<b\leq m, 1\leq i\leq \mu_a, 1\leq j\leq \mu_b, r>0\},\\
K&:=\{F_{b,a;i,j}^{(r)};~1\leq a<b\leq m, 1\leq i\leq \mu_b, 1\leq j\leq \mu_a, r>0\}.
\end{align*}
Then the set of all monomials in the union of the sets $I,J$ and $K$ taken in some fixed order forms a basis for $Y_n$.
\end{Theorem}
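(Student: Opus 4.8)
The plan is to transport the ordinary PBW theorem for the enveloping algebra $U(\fg)$ across the loop filtration, using the symbol computation already recorded in \eqref{identification}. By Lemma \ref{Lemma:chi iso} the map $\chi\colon U(\fg)\to\gr Y_n$ attached to the loop filtration is an isomorphism. The set $\{e_{a,b;i,j}t^r\}$, ranging over $1\le a,b\le m$, $1\le i\le\mu_a$, $1\le j\le\mu_b$ and $r\ge 0$, is a basis of $\fg=\gl_n[t]$, so by the usual PBW theorem for enveloping algebras (valid over any field, in particular over $\kk$) the ordered monomials in these elements, taken in any fixed order, form a basis of $U(\fg)$.

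Next I would match this basis against the parabolic generators. The index set above is in bijection with $I\cup J\cup K$: the diagonal blocks $a=b$ correspond to $I$, the strictly upper blocks $a<b$ to $J$, and the strictly lower blocks $a>b$ to $K$. Moreover \eqref{identification} identifies $\chi(e_{a,b;i,j}t^r)$ with $\gr_r D_{a;i,j}^{(r+1)}$, with $\gr_r E_{a,b;i,j}^{(r+1)}$, or with $\gr_r F_{a,b;i,j}^{(r+1)}$ according as $a=b$, $a<b$ or $a>b$; that is, $\chi$ carries the chosen basis of $U(\fg)$ precisely onto the leading symbols of the elements of $I\cup J\cup K$. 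Since $\chi$ is an isomorphism, the ordered monomials in these leading symbols form a basis of $\gr Y_n$.

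Finally I would invoke the standard lifting argument for filtered algebras to descend the statement from $\gr Y_n$ to $Y_n$. Fixing on $I\cup J\cup K$ the total order corresponding to the chosen order on the $\fg$-basis, each ordered monomial in the parabolic generators has, as its leading symbol in the loop filtration, exactly the matching ordered monomial of symbols computed above (there is no degree drop, since these products are nonzero basis elements of $\gr Y_n$). Spanning then follows by downward induction on filtration degree: the top-degree symbol of any given element is a combination of monomials of symbols, and subtracting the corresponding monomials in the parabolic generators strictly lowers the filtration degree. Linear independence follows by passing to the top filtration degree of any putative relation, where the distinct leading symbols are already known to be linearly independent in $\gr Y_n$. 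The only genuine input beyond this generic machinery is \eqref{identification} itself; the mild subtlety it conceals is that the leading symbol of the higher root element $E_{a,b;i,j}^{(r)}$, defined inductively through \eqref{EFabij r induction formula}, is again $e_{a,b;i,j}t^{r-1}$, which holds precisely because $a<b$ forces the second term of the bracket \eqref{Lie bracket current lie 2} to vanish. As \eqref{identification} is already established, the remaining steps are routine and the argument is short.
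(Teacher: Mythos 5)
Your proposal is correct and takes essentially the same route as the paper, which deduces the theorem in one line from the PBW theorem for $U(\fg)$ via the loop-filtration isomorphism $\chi$ of Lemma \ref{Lemma:chi iso} and the symbol identification \eqref{identification}, leaving the standard filtered lifting argument you spell out implicit. The only cosmetic difference is that the paper justifies \eqref{identification} directly from the quasideterminant formulas \eqref{quasiD}--\eqref{quasiF}, whereas you note the equivalent bracket computation via \eqref{EFabij r induction formula} and \eqref{Lie bracket current lie 2}; both are valid.
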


We will need the power series forms of some of the relations from Theorem \ref{Theorem:parabolic presentation}.
\begin{Lemma}
The following identities hold in $Y_n[[u^{-1},v^{-1}]]$ for all admissible $i,j,k,l$:
\begin{align}
(u-v)[E_{a;i,j}(u), E_{a;k,l}(v)] &=\big(E_{a;i,l}(v)-E_{a;i,l}(u)\big)\big(E_{a;k,j}(v)-E_{a;k,j}(u)\big),\label{ee relation}\\
&=\big(E_{a;k,j}(u)-E_{a;k,j}(v)\big)\big(E_{a;i,l}(u)-E_{a;i,l}(v)\big),\label{ee relation2}\\
(u-v)[E_{a;i,j}(u), F_{a;k,l}(v)] &=D'_{a;i,l}(u)D_{a+1;k,j}(u)-D'_{a;i,l}(v)D_{a+1;k,j}(v),\label{ef relation}\\
(u-v)[E_{a;i,j}(u), D_{a;k,l}(v)] &=D_{a;k,\alpha}(v)\big(E_{a;\alpha,j}(u)-E_{a;\alpha,j}(v)\big)\delta_{i,l},\label{ed1 relation}\\
(u-v)[E_{a;i,j}(u), D'_{a+1;k,l}(v)] &=\big(E_{a;i,\beta}(u)-E_{a;i,\beta}(v)\big)D'_{a+1;\beta,l}(v)\delta_{k,j},\label{ed2' relation}\\
(u-v)[E_{a;i,j}(u), D_{a+1;k,l}(v)] &=D_{a+1;k,j}(v)\big(E_{a;i,l}(v)-E_{a;i,l}(u)\big),\label{ed2 relation}\\
(u-v)[E_{a;i,j}(u), D'_{a;k,l}(v)] &=\big(E_{a;k,j}(v)-E_{a;k,j}(u)\big)D'_{a;i,l}(v).\label{ed1' relation}
\end{align}
(Here, $\alpha$ resp. $\beta$ should be summed over $1,\dots,\mu_a$ resp. $1,\dots,\mu_{a+1}$)
\end{Lemma}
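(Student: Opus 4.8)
The plan is to recognize each of \eqref{ee relation}--\eqref{ed1' relation} as a generating-function repackaging of one of the coefficient relations \eqref{pr4}, \eqref{pr5}, \eqref{pr7} already established in Theorem \ref{Theorem:parabolic presentation}, and to prove it by extracting the coefficient of a general monomial $u^{-r}v^{-s}$ from each side. Recall that multiplying a double series $\sum_{r,s}c_{r,s}u^{-r}v^{-s}$ by $(u-v)$ replaces its $u^{-r}v^{-s}$-coefficient by $c_{r+1,s}-c_{r,s+1}$, so on each left-hand side the coefficient of $u^{-r}v^{-s}$ is a difference of two structure constants, whereas the right-hand sides are honest products of series. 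The whole argument is thus a bookkeeping of these coefficient comparisons, together with one derivation trick for the relations that involve the inverse series $D'_a$.

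First I would treat \eqref{ee relation}. Its left-hand side contributes $[E_{a;i,j}^{(r+1)},E_{a;k,l}^{(s)}]-[E_{a;i,j}^{(r)},E_{a;k,l}^{(s+1)}]$, while for $r,s\geq 1$ the right-hand side contributes $-E_{a;i,l}^{(s)}E_{a;k,j}^{(r)}-E_{a;i,l}^{(r)}E_{a;k,j}^{(s)}$. Rewriting both commutators via \eqref{pr7}, the right-hand side of \eqref{pr7} telescopes under the difference to exactly this expression, so the two sides agree; the boundary coefficients ($r=0$ or $s=0$) are checked the same way. For \eqref{ee relation2} I would avoid a second computation: the left-hand side $(u-v)[E_{a;i,j}(u),E_{a;k,l}(v)]$ is invariant under the substitution $(i,j,u)\leftrightarrow(k,l,v)$ (using antisymmetry of the commutator), and this substitution carries the right-hand side of \eqref{ee relation} into that of \eqref{ee relation2}.

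Relations \eqref{ed1 relation} and \eqref{ed2 relation} are handled by the same coefficient comparison, now drawing on the two summands of \eqref{pr5} (the $\delta_{a,b}$ and $\delta_{a,b+1}$ parts, read as $[E_a,D_a]$ and $[E_a,D_{a+1}]$ after using antisymmetry). In each case the difference of the two $(u-v)$-shifted structure constants collapses the finite sum in \eqref{pr5} to its single boundary term, matching the product on the right; one only needs to remember that $D_{a;k,l}(v)$ has the central constant term $D_{a;k,l}^{(0)}=\delta_{k,l}$, which contributes nothing to the commutators. The relation \eqref{ef relation} is slightly different in flavour: \eqref{pr4} shows that $[E_{a;i,j}^{(r)},F_{a;k,l}^{(s)}]$ depends on $r,s$ only through $r+s$, so all bulk coefficients ($r,s\geq 1$) of the left-hand side vanish, matching the right-hand side, whose two terms are pure series in $u$ and in $v$ respectively; the surviving boundary coefficients reproduce \eqref{pr4} at $s=1$ and at $r=1$, once $D_{a;i,j}^{(0)}=D'^{(0)}_{a;i,j}=\delta_{i,j}$ is used via \eqref{pr1}--\eqref{pr2}.

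The genuinely new work, and the step I expect to be the main obstacle, is \eqref{ed2' relation} and \eqref{ed1' relation}, which involve the inverse series $D'_a(v)$ and cannot be matched against a single relation in Theorem \ref{Theorem:parabolic presentation}. Here I would not compare coefficients directly but instead apply the inner derivation $[E_{a;i,j}(u),-]$ to the identity $D_a(v)D'_a(v)=I$ (equivalently \eqref{pr1}--\eqref{pr2} in series form), which yields the matrix identity
\begin{align*}
[E_{a;i,j}(u),D'_{a;k,l}(v)]=-\sum_{p,q}D'_{a;k,p}(v)\,[E_{a;i,j}(u),D_{a;p,q}(v)]\,D'_{a;q,l}(v).
\end{align*}
Substituting \eqref{ed1 relation} (respectively \eqref{ed2 relation} for the $D'_{a+1}$ case), multiplying through by $(u-v)$, and then collapsing the inner sum by means of $\sum_p D'_{a;k,p}(v)D_{a;p,\alpha}(v)=\delta_{k,\alpha}$ leaves precisely the right-hand side of \eqref{ed1' relation} (respectively \eqref{ed2' relation}). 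The delicate points are keeping the non-commuting factors in their correct left-to-right order throughout the substitution and contracting the two independent summation indices against the right $\delta$-symbols, so that exactly one copy of $D'_a(v)$ survives on the correct side.
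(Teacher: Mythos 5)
Your proposal is correct, but it reaches the lemma by a different route than the paper. The paper disposes of \eqref{ee relation} and \eqref{ef relation}--\eqref{ed2 relation} by citing the characteristic-zero computations of Brundan--Kleshchev (\cite[Lemma 6.3]{BK05}), observing they work verbatim over $\kk$; it then derives \eqref{ee relation2} from antisymmetry exactly as you do, and proves only \eqref{ed1' relation} by hand, multiplying \eqref{ed1 relation} by entries of $D'_a(u)$ on the right and on the left and contracting via $D'_a=D_a^{-1}$ --- which is precisely your conjugation identity $[X,A^{-1}]=-A^{-1}[X,A]A^{-1}$ written out longhand. You differ in two ways. First, instead of citing \cite{BK05}, you re-derive the series identities from the coefficient relations \eqref{pr4}, \eqref{pr5}, \eqref{pr7} of Theorem \ref{Theorem:parabolic presentation} by extracting $u^{-r}v^{-s}$-coefficients; I checked the telescoping (e.g.\ for \eqref{ed1 relation} the difference $[E^{(r+1)},D^{(s)}]-[E^{(r)},D^{(s+1)}]$ collapses \eqref{pr5} to the single term $D^{(s)}_{a;k,\alpha}E^{(r)}_{a;\alpha,j}\delta_{i,l}$, and for \eqref{ef relation} the dependence of \eqref{pr4} on $r+s$ alone kills all mixed coefficients) and it is sound, including at the boundaries $r=0$ or $s=0$ where $E^{(0)}=F^{(0)}=0$ and $D^{(0)}=D'^{(0)}=I$ enter. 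Second, you treat \eqref{ed2' relation} by the same derivation trick rather than by citation, which gives a uniform self-contained argument for both inverse-series identities. What each approach buys: the paper's citation is shorter and inherits characteristic-independence directly from the Gauss-decomposition manipulations of \cite{BK05}, while your version makes the logical dependence on the already-established presentation explicit and is checkable within the paper. One caveat worth keeping in mind: since the paper's proof of Theorem \ref{Theorem:parabolic presentation} itself verifies \eqref{pr1}--\eqref{pr14} via the \cite{BK05} series computations, your argument is not an independent proof of the underlying content --- it recovers the series forms from their own coefficient shadows --- but within the paper's deductive order (theorem first, lemma second) this is perfectly legitimate, and it is indeed how the paper frames the lemma, as ``power series forms of some of the relations from Theorem \ref{Theorem:parabolic presentation}.''
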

\begin{proof}
Equations \eqref{ee relation} and \eqref{ef relation}-\eqref{ed2 relation} were proven over $\CC$ in \cite[Lemma 6.3]{BK05} and \cite[Theorem 2]{Peng11}, the same proof works here.
As $[E_{a;i,j}(u), E_{a;k,l}(v)]=-[E_{a;k,l}(v),E_{a;i,j}(u)]$,
\eqref{ee relation2} follows easily from \eqref{ee relation}.
For \eqref{ed1' relation}, we have by \eqref{ed1 relation} that 
\[
(u-v)\big(D_{a;i,j}(u)E_{a;k,l}(v)-E_{a;k,l}(v)D_{a;i,j}(u)\big)=\sum\limits_{\alpha}D_{a;i,\alpha}(u)\big(E_{a;\alpha,l}(v)-E_{a;\alpha,l}(u)\big)\delta_{k,j}.
\]
Multiplying $D'_{a;j,h}(u)$ on the right side yields 
\begin{align*}
&(u-v)\sum\limits_{j}\big(D_{a;i,j}(u)E_{a;k,l}(v)-E_{a;k,l}(v)D_{a;i,j}(u)\big)D'_{a;j,h}(u)\\
=&\sum\limits_{j,\alpha}D_{a;i,\alpha}(u)\big(E_{a;\alpha,l}(v)-E_{a;\alpha,l}(u)\big)D'_{a;j,h}(u)\delta_{k,j}.
\end{align*}
Using the fact that $D'_{a}(u)=(D_a(u))^{-1}$, we obtain
\begin{align*}
&(u-v)\big(\sum\limits_{j}D_{a;i,j}(u)E_{a;k,l}(v)D'_{a;j,h}(u)-E_{a;k,l}(v)\delta_{i,h}\big)\\
=&\sum\limits_{\alpha}D_{a;i,\alpha}(u)\big(E_{a;\alpha,l}(v)-E_{a;\alpha,l}(u)\big)D'_{a;k,h}(u).
\end{align*}
Multiplying $D'_{a;s,i}(u)$ on the left side gives the identity
\begin{align*}
&(u-v)\big(\sum\limits_{i,j}D'_{a;s,i}(u)D_{a;i,j}(u)E_{a;k,l}(v)D'_{a;j,h}(u)-\sum\limits_{i}D'_{a;s,i}(u)E_{a;k,l}(v)\delta_{i,h}\big)\\
=&\sum\limits_{i,\alpha}D'_{a;s,i}(u)D_{a;i,\alpha}(u)\big(E_{a;\alpha,l}(v)-E_{a;\alpha,l}(u)\big)D'_{a;k,h}(u).
\end{align*}
Using again the fact that $D'_{a}(u)=(D_a(u))^{-1}$ and simplifying the result, we have
\begin{align*}
&(u-v)\big(E_{a;k,l}(v)D'_{a;s,h}(u)-D'_{a;s,h}(u)E_{a;k,l}(v)\big)\\
=&\big(E_{a;s,l}(v)-E_{a;s,l}(u)\big)D'_{a;k,h}(u).
\end{align*}
Then replacing $(k,l,s,h)$ by $(i,j,k,l)$ gives the desired identity.
\end{proof}

\begin{Corollary}
The following hold in $Y_n[[u^{-1}]]$:
\begin{align}
D_{a;i,j}(u)E_{a-1;k,j}(u)&=E_{a-1;k,j}(u+1)D_{a;i,j}(u)\label{DE=Eu+1D},\\
D_{a;i,j}(u)E_{a;j,l}(u)-E_{a;j,l}(u-1)D_{a;i,j}(u)&=\sum\limits_{\alpha\neq j}D_{a;i,\alpha}(u)\big(E_{a;\alpha,l}(u-1)-E_{a;\alpha,l}(u)\big).\label{Eu-1D=DE}
\end{align}
\end{Corollary}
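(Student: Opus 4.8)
The plan is to derive each identity from one of the power-series relations of the preceding Lemma by a single well-chosen specialisation of the spectral parameter. The guiding observation is that both identities live at equal spectral parameters up to a unit shift, and that naively setting $v=u$ in the relevant relation only produces $0=0$ (the right-hand sides carry the commutator as a factor); the shift by $\pm1$ must therefore come from substituting $v=u\mp1$, chosen so as to annihilate an anomalous linear term.

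For \eqref{DE=Eu+1D} I would start from \eqref{ed2 relation} with $a$ replaced by $a-1$ and the free indices relabelled so that it reads
\[
(u-v)[E_{a-1;k,j}(u), D_{a;i,j}(v)] = D_{a;i,j}(v)\big(E_{a-1;k,j}(v)-E_{a-1;k,j}(u)\big).
\]
Expanding the commutator and collecting the terms containing $E_{a-1;k,j}(u)$ on the right brings this to the form
\[
(u-v)E_{a-1;k,j}(u)D_{a;i,j}(v) = D_{a;i,j}(v)\big[(u-v-1)E_{a-1;k,j}(u)+E_{a-1;k,j}(v)\big].
\]
The decisive step is to specialise $v=u-1$: this annihilates the coefficient $(u-v-1)$ while the prefactor $(u-v)$ becomes $1$, leaving $E_{a-1;k,j}(u)D_{a;i,j}(u-1)=D_{a;i,j}(u-1)E_{a-1;k,j}(u-1)$. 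Replacing $u$ by $u+1$ yields precisely \eqref{DE=Eu+1D}.

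For \eqref{Eu-1D=DE} the same mechanism applies to \eqref{ed1 relation}. Choosing the indices so that the commutator is $[E_{a;j,l}(u),D_{a;i,j}(v)]$ makes the Kronecker delta $\delta_{i,l}$ equal to $1$, and the relation becomes $(u-v)[E_{a;j,l}(u),D_{a;i,j}(v)]=\sum_{\alpha}D_{a;i,\alpha}(v)\big(E_{a;\alpha,l}(u)-E_{a;\alpha,l}(v)\big)$, the sum running over $\alpha=1,\dots,\mu_a$. I would isolate the $\alpha=j$ summand and rearrange as before to obtain
\[
(u-v)E_{a;j,l}(u)D_{a;i,j}(v)=D_{a;i,j}(v)\big[(u-v+1)E_{a;j,l}(u)-E_{a;j,l}(v)\big]+\sum_{\alpha\neq j}D_{a;i,\alpha}(v)\big(E_{a;\alpha,l}(u)-E_{a;\alpha,l}(v)\big).
\]
Here the killing substitution is $v=u+1$, which cancels the $(u-v+1)$ coefficient; after multiplying by $-1$, replacing $u$ by $u-1$, and moving one term across, this is exactly \eqref{Eu-1D=DE}.

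The only genuine point requiring care — and what I regard as the main (if modest) obstacle — is justifying that the substitutions $v=u\mp1$ and the accompanying shifts $u\mapsto u\pm1$ are legitimate operations on these two-variable power-series identities. They are: since $(u\mp1)^{-1}=u^{-1}(1\mp u^{-1})^{-1}\in\kk[[u^{-1}]]$, the assignment $v^{-1}\mapsto(u\mp1)^{-1}$ defines a continuous $\kk$-algebra homomorphism $Y_n[[u^{-1},v^{-1}]]\to Y_n[[u^{-1}]]$, each coefficient of $u^{-N}$ in the image being a finite sum, so applying it to a valid identity yields a valid identity; the shift $u\mapsto u\pm1$ is a homomorphism $Y_n[[u^{-1}]]\to Y_n[[u^{-1}]]$ for the same reason. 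One should also note that the rearranged forms above are honest identities in $Y_n[[u^{-1},v^{-1}]]$, obtained only by multiplication and addition and never by an illegitimate division by $(u-v)$, so that the specialisations may be applied directly to them.
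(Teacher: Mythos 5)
Your proof is correct and takes essentially the same route as the paper: the paper obtains \eqref{Eu-1D=DE} by setting $v:=u+1$ in \eqref{ed1 relation}, simplifying, and then relabelling indices and shifting $u\mapsto u-1$, and obtains \eqref{DE=Eu+1D} analogously from \eqref{ed2 relation}, exactly matching your two specialisations $v=u\mp 1$. Your closing justification that the substitutions $v=u\mp1$ and the shifts $u\mapsto u\pm1$ define continuous algebra homomorphisms on the rings of formal series is a routine but legitimate point of care that the paper leaves implicit.
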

\begin{proof}
These follow from \eqref{ed1 relation} and \eqref{ed2 relation}.
For example, to get \eqref{Eu-1D=DE}, set $v:=u+1$ in \eqref{ed1 relation}, simplify, we have
\[
D_{a;k,i}(u+1)E_{a;i,j}(u+1)-E_{a;i,j}(u)D_{a;k,i}(u+1)=\sum\limits_{\alpha\neq i}D_{a;k,\alpha}(u+1)\big(E_{a;\alpha,j}(u)-E_{a;\alpha,j}(u+1)\big).
\]
Replacing $u$ and $(i,j,k)$ by $u-1$ and $(j,l,i)$ gives the desired relation.
\end{proof}

\begin{Remark}\label{Remark:svsu/u-v}
Observe that for any formal series $s(u)=\sum_{r\geq 0}s^{(r)}u^{-r}$ we have the identity 
\begin{align}
\frac{s(v)-s(u)}{u-v}=\sum\limits_{r,s\geq 1}s^{(r+s-1)}u^{-r}v^{-s}.
\end{align}
Dividing by $u-v$ on both sides of \eqref{ed1' relation} and considering the coefficients of $u^{-r}v^{-s}$, we have 
\begin{align}\label{eijrda'kls}
[E_{a;i,j}^{(r)},D'^{(s)}_{a;k,l}]=\sum\limits_{t=0}^{s-1}E_{a;k,j}^{(r+s-1-t)}D'^{(t)}_{a;i,l}.
\end{align}
\end{Remark}

The following Lemma is a generalization of \cite[Lemma 4.8]{BT18}.
\begin{Lemma}\label{Lemma:EDE relations1}
The following identities hold in $Y_n[[u^{-1},v^{-1}]]$ for all admissible $i,j,k,l$ and all $\ell\geq 0$.  
  \begin{align}
&(u-v)[E_{a;i,j}(u), \big(E_{a;i,l}(v)-E_{a;i,l}(u)\big)\big(E_{a;i,j}(v)-E_{a;i,j}(u)\big)^{\ell}\big(E_{a;k,j}(v)-E_{a;k,j}(u)\big)]\label{eee relation}\\
&=(\ell+2)\big(E_{a;i,l}(v)-E_{a;i,l}(u)\big)\big(E_{a;i,j}(v)-E_{a;i,j}(u)\big)^{\ell+1}\big(E_{a;k,j}(v)-E_{a;k,j}(u)\big),\nonumber\\
&(u-v)[E_{a;i,j}(u), D_{a;k,\alpha}(v)\big(E_{a;\alpha,j}(v)-E_{a;\alpha,j}(u)\big)\big(E_{a;i,j}(v)-E_{a;i,j}(u)\big)^{\ell}]\label{ede relation}\\
&=\ell D_{a;k,\alpha}(v)\big(E_{a;\alpha,j}(v)-E_{a;\alpha,j}(u)\big)\big(E_{a;i,j}(v)-E_{a;i,j}(u)\big)^{\ell+1},\nonumber\\
&(u-v)[E_{a;i,j}(u), D_{a+1;k,j}(v)\big(E_{a;i,j}(v)-E_{a;i,j}(u)\big)^{\ell}\big(E_{a;i,l}(v)-E_{a;i,l}(u)\big)]\label{ed2e relation}\\
&=(\ell+2)D_{a+1;k,j}(v)\big(E_{a;i,j}(v)-E_{a;i,j}(u)\big)^{\ell+1}\big(E_{a;i,l}(v)-E_{a;i,l}(u)\big),\nonumber\\
&(u-v)[E_{a;i,j}(u), D_{a+1;k,j}(v)\big(E_{a;i,j}(v)-E_{a;i,j}(u)\big)^{\ell}D'_{a;i,l}(v)]\label{ed2ed' relation}\\
&=(\ell+2)D_{a+1;k,j}(v)\big(E_{a;i,j}(v)-E_{a;i,j}(u)\big)^{\ell+1}D'_{a;i,l}(v).\nonumber
\end{align}
(Here, $\alpha$ should be summed over $1,\dots,\mu_a$.)
\end{Lemma}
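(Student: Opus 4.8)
The plan is to prove all four identities uniformly by induction on $\ell$, treating \eqref{eee relation}--\eqref{ed2ed' relation} simultaneously since they share the same combinatorial mechanism: commuting $E_{a;i,j}(u)$ past a product, each factor contributes a term and the total multiplicity is read off from how many factors ``see'' the repeated index pattern. The base case $\ell=0$ should already be available from the previous lemma: for \eqref{eee relation} with $\ell=0$ it is essentially \eqref{ee relation} applied twice (bracketing $E_{a;i,j}(u)$ against the product of two factors and using the Leibniz rule), while the base cases of \eqref{ede relation}, \eqref{ed2e relation}, \eqref{ed2ed' relation} follow by combining \eqref{ed1 relation}, \eqref{ed2 relation}, \eqref{ed1' relation}, \eqref{ed2' relation} with \eqref{ee relation}. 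The key structural tool throughout is the derivation property of $\ad(E_{a;i,j}(u))$ together with the fact that $(u-v)[E_{a;i,j}(u),-]$ acts on each of the building-block factors $E_{a;i,l}(v)-E_{a;i,l}(u)$, $E_{a;i,j}(v)-E_{a;i,j}(u)$, $D_{a;k,\alpha}(v)$, $D_{a+1;k,j}(v)$, $D'_{a;i,l}(v)$ in a way already computed in \eqref{ee relation}--\eqref{ed1' relation}.

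First I would isolate the single most important computation, namely the self-bracket
\[
(u-v)[E_{a;i,j}(u),\,E_{a;i,j}(v)-E_{a;i,j}(u)]=\big(E_{a;i,j}(v)-E_{a;i,j}(u)\big)^{2},
\]
which is the $i=k$, $j=l$ specialization of \eqref{ee relation} (the two factors on the right coincide). This identity is the engine that produces the integer coefficients: bracketing $E_{a;i,j}(u)$ against $\big(E_{a;i,j}(v)-E_{a;i,j}(u)\big)^{\ell}$ yields $\ell$ copies of $\big(E_{a;i,j}(v)-E_{a;i,j}(u)\big)^{\ell+1}$ by the Leibniz rule, each factor contributing one. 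For the inductive step I would write the length-$(\ell+1)$ product as a length-$\ell$ product times one extra $\big(E_{a;i,j}(v)-E_{a;i,j}(u)\big)$ factor, apply $(u-v)[E_{a;i,j}(u),-]$ via Leibniz, feed the length-$\ell$ piece into the induction hypothesis, and evaluate the bracket against the extra factor using the engine identity; the multiplicities $(\ell+2)$ in \eqref{eee relation}, \eqref{ed2e relation}, \eqref{ed2ed' relation} versus $\ell$ in \eqref{ede relation} then track exactly how many factors are of the ``repeating'' type $\big(E_{a;i,j}(v)-E_{a;i,j}(u)\big)$ and whether the flanking factors also contribute (they do in \eqref{eee relation}, \eqref{ed2e relation}, \eqref{ed2ed' relation} but the $D_{a;k,\alpha}(v)\big(E_{a;\alpha,j}(v)-E_{a;\alpha,j}(u)\big)$ head in \eqref{ede relation} commutes, using \eqref{ed1 relation}).

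The main obstacle will be the bookkeeping in the ``flanking'' factors, i.e.\ verifying that bracketing $E_{a;i,j}(u)$ against the outermost non-repeating factors produces precisely the claimed net coefficient and no stray terms. For \eqref{eee relation} the flanks $E_{a;i,l}(v)-E_{a;i,l}(u)$ and $E_{a;k,j}(v)-E_{a;k,j}(u)$ each contribute one unit (giving the $+2$ beyond the $\ell$ from the middle), and one must check via \eqref{ee relation} that the cross-terms reassemble into the single product on the right rather than leaving residual commutators; the apparent danger is that $[E_{a;i,j}(u),E_{a;i,l}(v)-E_{a;i,l}(u)]$ and $[E_{a;i,j}(u),E_{a;k,j}(v)-E_{a;k,j}(u)]$ might introduce factors with indices other than those appearing on the right, but \eqref{ee relation} shows these brackets stay within the same index set. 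For \eqref{ede relation} the delicate point is that the head $D_{a;k,\alpha}(v)\big(E_{a;\alpha,j}(v)-E_{a;\alpha,j}(u)\big)$ must genuinely commute with $E_{a;i,j}(u)$ up to the expected order, which follows from \eqref{ed1 relation} combined with \eqref{ee relation} and the summation over $\alpha$ telescoping; for \eqref{ed2ed' relation} one additionally needs $[E_{a;i,j}(u),D'_{a;i,l}(v)]$ from \eqref{ed1' relation} to contribute nothing to the leading coefficient. I expect these flank computations, rather than the inductive scaffolding, to absorb most of the work, and I would organize them as a short preliminary lemma recording $(u-v)[E_{a;i,j}(u),X]$ for each building block $X$ before assembling the induction.
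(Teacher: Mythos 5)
Your overall strategy is exactly the paper's: the paper likewise reduces everything to the building-block brackets \eqref{ee relation}--\eqref{ed1' relation} plus the Leibniz rule, verifying the $\ell=0$ instance of \eqref{eee relation}, proving the head cancellation $(u-v)[E_{a;i,j}(u),\sum_{\alpha}D_{a;k,\alpha}(v)\big(E_{a;\alpha,j}(v)-E_{a;\alpha,j}(u)\big)]=0$ for \eqref{ede relation}, and declaring \eqref{ed2e relation}--\eqref{ed2ed' relation} similar --- which is precisely your ``preliminary lemma of brackets against each building block, then assemble'' organization; your induction on $\ell$ is the same computation packaged recursively, and your engine identity is the correct $k=i$, $l=j$ specialization of \eqref{ee relation}. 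One cosmetic point: the cancellation behind \eqref{ede relation} is not a telescoping sum. By \eqref{ed1 relation} the bracket against $D_{a;k,\alpha}(v)$ survives only for $\alpha=i$ but regenerates the \emph{full} sum $\sum_{\beta}D_{a;k,\beta}(v)\big(E_{a;\beta,j}(u)-E_{a;\beta,j}(v)\big)\big(E_{a;i,j}(v)-E_{a;i,j}(u)\big)$, and this cancels the Leibniz contribution $\sum_{\alpha}D_{a;k,\alpha}(v)\big(E_{a;i,j}(v)-E_{a;i,j}(u)\big)\big(E_{a;\alpha,j}(v)-E_{a;\alpha,j}(u)\big)$ term by term only after the two difference factors are interchanged, which is the order-reversal identity \eqref{ee relation2} (the equality of the right-hand sides of \eqref{ee relation} and \eqref{ee relation2}). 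That same identity, not \eqref{ee relation} alone, is also what lets the factor inserted by the tail bracket in \eqref{ed2e relation} migrate to the left of the tail so as to merge into the power $\big(E_{a;i,j}(v)-E_{a;i,j}(u)\big)^{\ell+1}$.

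The substantive flaw is your closing claim that $[E_{a;i,j}(u),D'_{a;i,l}(v)]$ must ``contribute nothing to the leading coefficient'' of \eqref{ed2ed' relation}; this is false, and it contradicts your own (correct) earlier statement that the flanking factors do contribute in that identity. By \eqref{ed1' relation} with first index $k=i$,
\begin{equation*}
(u-v)[E_{a;i,j}(u),D'_{a;i,l}(v)]=\big(E_{a;i,j}(v)-E_{a;i,j}(u)\big)D'_{a;i,l}(v),
\end{equation*}
so the $D'$ tail inserts exactly one factor of $E_{a;i,j}(v)-E_{a;i,j}(u)$, conveniently on the left, where it merges with the middle power. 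The correct tally for \eqref{ed2ed' relation} is $1$ from the head $D_{a+1;k,j}(v)$ (via \eqref{ed2 relation} with $l=j$), plus $\ell$ from the middle factors, plus $1$ from the $D'$ tail, giving $\ell+2$; if the tail genuinely contributed nothing, your induction would close on the coefficient $\ell+1$ and the identity would fail. With this single bracket computed correctly the rest of your argument goes through as planned and reproduces the paper's proof.
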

\begin{proof}
The relation \eqref{eee relation} follows from \eqref{ee relation} and the Leibniz rule. We just check for the case $\ell=0$.
In view of \eqref{ee relation}, we have 
\begin{align*}
&(u-v)[E_{a;i,j}(u), \big(E_{a;i,l}(v)-E_{a;i,l}(u)\big)]=(u-v)[E_{a;i,j}(u), E_{a;i,l}(v)]\\
&=\big(E_{a;i,l}(v)-E_{a;i,l}(u)\big)\big(E_{a;i,j}(v)-E_{a;i,j}(u)\big)    
\end{align*}
and
\begin{align*}
 (u-v)[E_{a;i,j}(u), \big(E_{a;k,j}(v)-E_{a;k,j}(u)\big)]=\big(E_{a;i,j}(v)-E_{a;i,j}(u)\big)\big(E_{a;k,j}(v)-E_{a;k,j}(u)\big).
\end{align*}
So that the Leibniz rule implies that
 \begin{align*}
&(u-v)[E_{a;i,j}(u), \big(E_{a;i,l}(v)-E_{a;i,l}(u)\big)\big(E_{a;k,j}(v)-E_{a;k,j}(u)\big)]\\
&=2\big(E_{a;i,l}(v)-E_{a;i,l}(u)\big)\big(E_{a;i,j}(v)-E_{a;i,j}(u)\big)\big(E_{a;k,j}(v)-E_{a;k,j}(u)\big).
\end{align*}

For \eqref{ede relation},
by the Leibniz rule, together with \eqref{ee relation},
it suffices to show that
\begin{align}\label{ede=0}
(u-v)[E_{a;i,j}(u), \sum\limits_{\alpha}D_{a;k,\alpha}(v)\big(E_{a;\alpha,j}(v)-E_{a;\alpha,j}(u)\big)]=0.
\end{align}
By \eqref{ed1 relation} and \eqref{ee relation}, we have
\begin{align*}
&(u-v)[E_{a;i,j}(u), \sum\limits_{\alpha} D_{a;k,\alpha}(v)\big(E_{a;\alpha,j}(v)-E_{a;\alpha,j}(u)\big)]\\
=&(u-v)\sum\limits_{\alpha}[E_{a;i,j}(u), D_{a;k,\alpha}(v)]\big(E_{a;\alpha,j}(v)-E_{a;\alpha,j}(u)\big)\\
&+\sum\limits_{\alpha}D_{a;k,\alpha}(v)(u-v)[E_{a;i,j}(u),E_{a;\alpha,j}(v)]\\
=&\sum\limits_{\alpha}D_{a;k,\alpha}(v)\big(E_{a;\alpha,j}(u)-E_{a;\alpha,j}(v)\big)\big(E_{a;i,j}(v)-E_{a;i,j}(u)\big)\\
&+\sum\limits_{\alpha}D_{a;k,\alpha}(v)\big(E_{a;i,j}(v)-E_{a;i,j}(u)\big)\big(E_{a;\alpha,j}(v)-E_{a;\alpha,j}(u)\big)\\
=&0.
\end{align*}
The last equality follows from \eqref{ee relation2},
we thus obtain \eqref{ede=0}.

The relation \eqref{ed2e relation}-\eqref{ed2ed' relation} can be proved by similar methods.
They follow from \eqref{ee relation},
\eqref{ee relation2}, \eqref{ed2 relation} and \eqref{ed1' relation} using Leibniz again.
\end{proof}
The following relations are closely related to the ones in Lemma \ref{Lemma:EDE relations1}.
\begin{Lemma}\label{Lemma:ErDrEr relations2}
For any $a=1,\dots,m-1$, $\ell\geq 0, r, s>0$ and admissible $i,j,k,l$, we have that
\begin{align}\label{1111 relation}
\Bigg[E_{a;i,j}^{(r)}, \hspace{-10mm}\sum_{\substack{s_1,s_2,t_1,\dots,t_{\ell} \geq r \\ s_1+s_2+t_1+\cdots+t_{\ell}=
    (\ell+1)(r-1)+s}}
\hspace{-10mm}
E_{a;i,l}^{(s_1)}E_{a;i,j}^{(t_1)}
\cdots
E_{a;i,j}^{(t_{\ell})}E_{a;k,j}^{(s_{2})}\Bigg]
&=(\ell+2)
\hspace{-15mm}\sum_{\substack{s_1,s_2,t_1,\dots,t_{\ell+1} \geq r \\ s_1+s_2+t_1+\cdots+t_{\ell+1} =
    (\ell+2)(r-1)+s}}
\hspace{-15mm}E_{a;i,l}^{(s_1)}E_{a;i,j}^{(t_1)}
\cdots
E_{a;i,j}^{(t_{\ell+1})}E_{a;k,j}^{(s_2)},
\end{align}
\begin{align}\label{2222 relation}
\Bigg[E_{a;i,j}^{(r)}, \hspace{-10mm}\sum_{\substack{s_1,s_2,t_1,\dots,t_{\ell} \leq r-1 \\ s_1+s_2+t_1+\cdots+t_{\ell}=
    (\ell+1)(r-1)+s}}
\hspace{-10mm}
E_{a;i,l}^{(s_1)}E_{a;i,j}^{(t_1)}
\cdots
E_{a;i,j}^{(t_{\ell})}E_{a;k,j}^{(s_{2})}\Bigg]
&=-(\ell+2)
\hspace{-17mm}\sum_{\substack{s_1,s_2,t_1,\dots,t_{\ell+1} \leq r-1 \\ s_1+s_2+t_1+\cdots+t_{\ell+1} =
-(\ell+2)(r-1)+s}}
\hspace{-15mm}E_{a;i,l}^{(s_1)}E_{a;i,j}^{(t_1)}
\cdots
E_{a;i,j}^{(t_{\ell+1})}E_{a;k,j}^{(s_2)},
\end{align}
\begin{align}\label{3333 raltion}
\Bigg[E_{a;i,j}^{(r)}, \hspace{-10mm}\sum_{\substack{s_1,t_1,\dots,t_{\ell} \geq r, t \geq 0 \\ s_1+t_1+\cdots+t_{\ell}+t =
    (\ell+1)(r-1)+s}}\hspace{-10mm}
D_{a;k,\alpha}^{(t)}E_{a;\alpha,j}^{(s_1)}E_{a;i,j}^{(t_1)}
\cdots
E_{a;i,j}^{(t_{\ell})}\Bigg]
&=\ell\hspace{-15mm}
\sum_{\substack{s_1,t_1,\dots,t_{\ell+1} \geq r, t \geq 0 \\ s_1+t_1+\cdots+t_{\ell+1}+t =(\ell+2)(r-1)+s}}
\hspace{-15mm}D_{a;k,\alpha}^{(t)}E_{a;\alpha,j}^{(s_1)}E_{a;i,j}^{(t_1)}
\cdots
E_{a;i,j}^{(t_{\ell+1})},
\end{align}
\begin{align}\label{4444 raltion}
\Bigg[E_{a;i,j}^{(r)}, \hspace{-10mm}\sum_{\substack{s_1,t_1,\dots,t_{\ell} \geq r, t \geq 0 \\ s_1+t_1+\cdots+t_{\ell}+t =
    (\ell+1)(r-1)+s}}\hspace{-10mm}
D_{a+1;k,j}^{(t)}E_{a;i,j}^{(t_1)}\cdots E_{a;i,j}^{(t_\ell)}
E_{a;i,l}^{(s_1)}\Bigg]
&=(\ell+2)\hspace{-15mm}
\sum_{\substack{s_1,t_1,\dots,t_{\ell+1} \geq r, t \geq 0 \\ s_1+t_1+\cdots+t_{\ell+1}+t =(\ell+2)(r-1)+s}}
\hspace{-15mm}D_{a+1;k,j}^{(t)}E_{a;i,j}^{(t_1)}\cdots E_{a;i,j}^{(t_{\ell+1})}
E_{a;i,l}^{(s_1)},
\end{align}
\begin{align}\label{5555 raltion}
\Bigg[E_{a;i,j}^{(r)}, \hspace{-10mm}\sum_{\substack{t_1,\dots,t_{\ell} \geq r, t, u\geq 0 \\ t_1+\cdots+t_{\ell}+t+u =
\ell(r-1)+s}}\hspace{-10mm}
D_{a+1;k,j}^{(t)}E_{a;i,j}^{(t_1)}\cdots E_{a;i,j}^{(t_\ell)}
D'^{(u)}_{a;i,l}\Bigg]
&=(\ell+2)\hspace{-15mm}
\sum_{\substack{t_1,\dots,t_{\ell+1} \geq r, t, u\geq 0 \\
t_1+\cdots+t_{\ell+1}+t+u =
(\ell+1)(r-1)+s}}
\hspace{-15mm}D_{a+1;k,j}^{(t)}E_{a;i,j}^{(t_1)}\cdots E_{a;i,j}^{(t_{\ell+1})}
D'^{(u)}_{a;i,l}.
\end{align}
(Here, $\alpha$ should be summed over $1,\dots,\mu_a$.)
\end{Lemma}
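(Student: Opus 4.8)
The plan is to prove the five identities by the same Leibniz argument used for Lemma~\ref{Lemma:EDE relations1}, carried out at the level of \emph{truncated} generating series, and then to read off a single coefficient. Fix $a$ and $r>0$, and for every admissible pair $(x,y)$ set
\[
\tilde E_{x,y}(v):=\sum_{q\geq 1}E_{a;x,y}^{(r+q-1)}v^{-q},
\]
so that $\tilde E_{x,y}(v)$ records exactly the components of index $\geq r$; the $D$- and $D'$-series are left untruncated. A direct comparison of coefficients shows that the inner sum on the left of \eqref{1111 relation} is precisely the $v^{-(s-r+1)}$-coefficient of $\tilde E_{i,l}(v)\tilde E_{i,j}(v)^{\ell}\tilde E_{k,j}(v)$, that of \eqref{3333 raltion} the $v^{-s}$-coefficient of $\sum_{\alpha}D_{a;k,\alpha}(v)\tilde E_{\alpha,j}(v)\tilde E_{i,j}(v)^{\ell}$, and likewise for \eqref{4444 raltion} and \eqref{5555 raltion}; thus each of these is equivalent to a one-variable identity of the shape $[E_{a;i,j}^{(r)},\tilde X(v)]=c\,\tilde X'(v)$. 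The remaining relation \eqref{2222 relation} is the mirror statement for the complementary truncation $\sum_{q=1}^{r-1}E_{a;x,y}^{(q)}v^{\,r-1-q}$ keeping the components of index $\leq r-1$.

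First I would record the base commutators, which are the component forms of the relations feeding Lemma~\ref{Lemma:EDE relations1}. The decisive point is that $\ad(E_{a;i,j}^{(r)})$ \emph{preserves the truncation}. Splitting the two sums of \eqref{pr7} at $t=r$ gives
\[
[E_{a;i,j}^{(r)},\tilde E_{i,l}(v)]=\tilde E_{i,l}(v)\tilde E_{i,j}(v),\qquad [E_{a;i,j}^{(r)},\tilde E_{k,j}(v)]=\tilde E_{i,j}(v)\tilde E_{k,j}(v),
\]
both sides again truncated, while the complementary range $t\leq r-1$ produces the opposite sign responsible for the $-(\ell+2)$ in \eqref{2222 relation}. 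In the same way \eqref{ed2 relation} yields $[E_{a;i,j}^{(r)},D_{a+1;k,j}(v)]=D_{a+1;k,j}(v)\tilde E_{i,j}(v)$, the relation \eqref{ed1' relation} (in the form \eqref{eijrda'kls}) yields $[E_{a;i,j}^{(r)},D'_{a;i,l}(v)]=\tilde E_{i,j}(v)D'_{a;i,l}(v)$, and the vanishing \eqref{ede=0} shows that the head block $\sum_{\alpha}D_{a;k,\alpha}(v)\tilde E_{\alpha,j}(v)$ is annihilated by $\ad(E_{a;i,j}^{(r)})$.

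With these in hand the identities follow from the Leibniz rule applied to the relevant products. Since $\ad(E_{a;i,j}^{(r)})$ inserts one factor $\tilde E_{i,j}(v)$ at each factor whose base commutator is non-zero, the numerical coefficient is exactly the number of such factors: the two outer factors together with the $\ell$ internal copies of $\tilde E_{i,j}(v)$ give $\ell+2$ in \eqref{1111 relation}, \eqref{2222 relation}, \eqref{4444 raltion} and \eqref{5555 raltion}, whereas in \eqref{3333 raltion} the head block contributes nothing and only the $\ell$ internal factors survive, yielding the coefficient $\ell$. Extracting the appropriate $v^{-N}$-coefficient of each resulting one-variable identity then returns \eqref{1111 relation}--\eqref{5555 raltion}. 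I expect the main obstacle to lie in the mixed relations \eqref{3333 raltion}--\eqref{5555 raltion}: one must check that $\ad(E_{a;i,j}^{(r)})$ genuinely stabilizes the series built from the $D$'s and $D'$'s, keep careful track of the non-commuting order and of the side on which the new $\tilde E_{i,j}(v)$ is inserted, and verify that the summation over $\alpha$ collapses as in \eqref{ede=0} --- precisely the points where the clean ``insert one $\tilde E_{i,j}(v)$'' picture is least transparent.
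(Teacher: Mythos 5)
Your proposal is correct and is in substance the paper's own proof: the paper also establishes \eqref{1111 relation}--\eqref{5555 raltion} by the Leibniz rule from exactly the base relations you invoke --- \eqref{pr7} split at the threshold $r$, \eqref{pr5}, \eqref{eijrda'kls}, the symmetry $\sum E_{a;i,l}^{(s_1)}E_{a;k,j}^{(s_2)}=\sum E_{a;k,j}^{(s_1)}E_{a;i,l}^{(s_2)}$ for repositioning the inserted factor, and the \eqref{ede=0}-type cancellation of the head block in \eqref{3333 raltion} --- your truncated one-variable series $\tilde E_{x,y}(v)$ merely automate the index resummations that the paper carries out by hand on the coefficient sums. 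As a side remark, your packaging correctly produces the constraint $s_1+s_2+t_1+\cdots+t_{\ell+1}=(\ell+2)(r-1)+s$ on the right-hand side of \eqref{2222 relation}, which confirms that the minus sign in the paper's displayed constraint there is a typographical slip rather than a discrepancy with your argument.
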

\begin{proof}
They can be proved by a very similar methods as in the proof of Lemma \ref{Lemma:EDE relations1}.
For \eqref{1111 relation}, we have by \eqref{pr7} that
\[
[E_{a;i,j}^{(r)}, E_{a;i,l}^{(s_1)}]=\sum_{\substack{s_1',s_1'' \geq r \\
s'_1+s_1''=(r-1)+s_1}} E_{a;i,l}^{(s_1')}E_{a;i,j}^{(s_1'')},~
[E_{a;i,j}^{(r)}, E_{a;k,j}^{(s_2)}]=\sum_{\substack{s_2',s_2'' \geq r \\
s'_2+s_2''=(r-1)+s_2}} E_{a;i,j}^{(s_2')}E_{a;k,j}^{(s_2'')}
\]
and
\[
[E_{a;i,j}^{(r)}, E_{a;i,j}^{(t_q)}]=\sum_{\substack{t_q',t_q'' \geq r \\
t'_q+t_q''=(r-1)+t_q}} E_{a;i,j}^{(t_q')}E_{a;i,j}^{(t_q'')}
\]
for $0<r\leq s_1,s_2,t_q$.
Using the above and the Leibniz rule, we deduce that the left side of \eqref{1111 relation} equals
\begin{align*}
&\sum_{\substack{s_1,s_2,t_1,\dots,t_{\ell} \geq r \\ s_1+s_2+t_1+\cdots+t_{\ell}=
    (\ell+1)(r-1)+s}}
\hspace{2mm}\sum_{\substack{s_1',s_1'' \geq r \\
s'_1+s_1''=(r-1)+s_1}}
E_{a;i,l}^{(s_1')}E_{a;i,j}^{(s_1'')}E_{a;i,j}^{(t_1)}
\cdots
E_{a;i,j}^{(t_{\ell})}E_{a;k,j}^{(s_{2})}\\
+&\sum_{\substack{s_1,s_2,t_1,\dots,t_{\ell} \geq r \\ s_1+s_2+t_1+\cdots+t_{\ell}=
    (\ell+1)(r-1)+s}}
\hspace{2mm}\sum_{\substack{s_2',s_2'' \geq r \\
s'_2+s_2''=(r-1)+s_1}}
E_{a;i,l}^{(s_1)}E_{a;i,j}^{(t_1)}
\cdots
E_{a;i,j}^{(t_{\ell})}E_{a;i,j}^{(s_2')}E_{a;k,j}^{(s_2'')}\\
+&\sum\limits_{q=1}^{\ell}\sum_{\substack{s_1,s_2,t_1,\dots,t_{\ell} \geq r \\ s_1+s_2+t_1+\cdots+t_{\ell}=
    (\ell+1)(r-1)+s}}
\hspace{2mm}\sum_{\substack{t_q',t_q'' \geq r \\
t'_q+t_q''=(r-1)+t_q}}
E_{a;i,l}^{(s_1)}E_{a;i,j}^{(t_1)}\cdots E_{a;i,j}^{(t_{q-1})}
E_{a;i,j}^{(t_q')}E_{a;i,j}^{(t_q'')}E_{a;i,j}^{(t_{q+1})}\cdots E_{a;i,j}^{(t_{\ell})}E_{a;i,j}^{(s_2)}\\
=& 2\sum_{\substack{s_1,s_2,t_1,\dots,t_{\ell+1} \geq r \\ s_1+s_2+t_1+\cdots+t_{\ell+1} =
    (\ell+2)(r-1)+s}}
\hspace{-15mm}E_{a;i,l}^{(s_1)}E_{a;i,j}^{(t_1)}
\cdots
E_{a;i,j}^{(t_{\ell+1})}E_{a;k,j}^{(s_2)}\\
+&\sum\limits_{q=1}^{\ell}\sum_{\substack{s_1,s_2,t_1,\dots,t_{\ell+1} \geq r \\ s_1+s_2+t_1+\cdots+t_{\ell+1}=
    (\ell+2)(r-1)+s}}
E_{a;i,l}^{(s_1)}E_{a;i,j}^{(t_1)}\cdots E_{a;i,j}^{(t_{\ell+1})}E_{a;i,j}^{(s_2)},
\end{align*}
which gives the right hand side of \eqref{1111 relation}.
The proof of \eqref{2222 relation} is similar.
Now we recall the following relation
\[
\sum_{\substack{s_1,s_2 \geq r \\
s_1+s_2=(r-1)+s}}
E_{a;i,l}^{(s_1)}E_{a;k,j}^{(s_2)}=\sum_{\substack{s_1,s_2 \geq r \\
s_1+s_2=(r-1)+s}}
E_{a;k,j}^{(s_1)}E_{a;i,l}^{(s_2)},
\]
which also follows from \eqref{pr7}.
Then \eqref{3333 raltion} follows from this and \eqref{1111 relation} with the Leibniz rule.
Finally \eqref{4444 raltion} and \eqref{5555 raltion} follow similarly from \eqref{3333 raltion} together with \eqref{pr5} and \eqref{eijrda'kls}.
\end{proof}

We define
\begin{align}\label{definition:down formula}
D_{a;i,j\downarrow \ell}(u)&:=D_{a;i,j}(u)D_{a;i,j}(u-1)\cdots D_{a;i,j}(u-\ell+1),
\end{align}
\begin{align}\label{definition:up formula}
D_{a;i,j\uparrow \ell}(u)&:=D_{a;i,j}(u)D_{a;i,j}(u+1)\cdots D_{a;i,j}(u+\ell-1).
\end{align}
By convention, we set $D_{a;i,j\downarrow 0}(u)=D_{a;i,j\uparrow 0}(u)=1$.

Using \eqref{pr3} and induction on $r+s$, it is easy to see in particular that 
\begin{align}\label{Daijrs comm}
D_{a;i,j}^{(r)}D_{a;i,j}^{(s)}=D_{a;i,j}^{(s)}D_{a;i,j}^{(r)}.
\end{align}
Hence the order of the product on the right hand sides of \eqref{definition:down formula} and \eqref{definition:up formula} is irrelevant.
Moreover, \eqref{pr3} also implies that
\begin{align}\label{DD relatiaon}
(u-v)[D_{a;i,j}(u),D_{a;k,l}(v)]=\big(D_{a;k,j}(u)D_{a;i,l}(v)-D_{a;k,j}(v)D_{a;i,l}(u)\big).     
\end{align}

\begin{Lemma}\label{Lemma:DD relation induction}
The following relations hold: 
\begin{align}
D_{a;i,j}(u-1)D_{a;i,l}(u)&=D_{a;i,l}(u-1)D_{a;i,j}(u),\label{Dau-1 Da}\\
(\ell+1)D_{a;i,l}(u)D_{a;i,j}(u+\ell)&=\ell D_{a;i,j}(u+\ell)D_{a;i,l}(u)+D_{a;i,l}(u+\ell)D_{a;i,j}(u)~\text{for all}~\ell\geq 0. \label{DD relation induction}  
\end{align}
\end{Lemma}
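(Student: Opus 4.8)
The plan is to obtain both identities as specializations of the power series relation \eqref{DD relatiaon}, choosing in each case a value of the second spectral parameter $v$ equal to an integer shift of $u$. The natural first step is to set $k=i$ in \eqref{DD relatiaon} and expand the commutator; collecting the two orderings of $D_{a;i,j}(u)D_{a;i,l}(v)$ yields the auxiliary identity
\[
(u-v-1)D_{a;i,j}(u)D_{a;i,l}(v)=(u-v)D_{a;i,l}(v)D_{a;i,j}(u)-D_{a;i,j}(v)D_{a;i,l}(u),
\]
valid in $Y_n[[u^{-1},v^{-1}]]$. This couples the two orderings of a product of diagonal series sharing the row index $i$, and both claimed relations fall out of it by a single substitution each.

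For \eqref{Dau-1 Da} I would put $v=u-1$, so that the prefactor $u-v-1$ on the left vanishes; the surviving right-hand side is $D_{a;i,l}(u-1)D_{a;i,j}(u)-D_{a;i,j}(u-1)D_{a;i,l}(u)$, and equating it to zero is precisely \eqref{Dau-1 Da}. For \eqref{DD relation induction} I would first interchange the column indices $j$ and $l$ in the displayed auxiliary identity and then put $v=u+\ell$, so the scalar $u-v$ becomes $-\ell$; clearing the overall sign turns the left-hand coefficient into $\ell+1$ and leaves on the right the term with scalar $\ell$ together with $D_{a;i,l}(u+\ell)D_{a;i,j}(u)$, i.e. the companion of $D_{a;i,l}(u+\ell)D_{a;i,j}(u)$ carrying the factor $D_{a;i,l}(u)$. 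This produces the relation for every $\ell\geq 0$ simultaneously, the case $\ell=0$ reducing to the tautology $D_{a;i,l}(u)D_{a;i,j}(u)=D_{a;i,l}(u)D_{a;i,j}(u)$; in particular no separate induction on $\ell$ is required, although one could alternatively run such an induction using \eqref{Dau-1 Da} as the base step, which presumably accounts for the name of the lemma.

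The one genuinely delicate point, and the step I expect to be the main obstacle, is justifying the substitutions $v\mapsto u-1$ and $v\mapsto u+\ell$ inside a two-variable formal power series identity. This is legitimate because $D_{a;i,j}(u\pm c)=\sum_{r\geq 0}D_{a;i,j}^{(r)}(u\pm c)^{-r}$ is a well-defined element of $Y_n[[u^{-1}]]$ once each $(u\pm c)^{-r}$ is expanded as a binomial series in $u^{-1}$, so these shift substitutions carry \eqref{DD relatiaon} to valid identities in $Y_n[[u^{-1}]]$. The only factor that could cause trouble is the prefactor $u-v$ of the commutator, but under each specialization it becomes the scalar $1$ or $-\ell$, so no division by a vanishing series occurs; should one prefer to avoid the substitution language altogether, the specialized identities can instead be verified coefficientwise from \eqref{pr3}.
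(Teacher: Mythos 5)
Your proposal is correct and takes essentially the same route as the paper, whose proof of \eqref{DD relation induction} is precisely to substitute $(i,j,k,l)\mapsto(i,l,i,j)$ and $v:=u+\ell$ into \eqref{DD relatiaon} and simplify (and \eqref{Dau-1 Da} follows from the analogous specialization $v:=u-1$), the shift substitutions being legitimate for exactly the reason you give, since each $(u\pm c)^{-r}$ expands binomially in $u^{-1}$ with only finitely many contributions to each coefficient. One point in your favor: your computation produces $\ell\,D_{a;i,j}(u+\ell)D_{a;i,l}(u)$ for the middle term --- as your phrase ``carrying the factor $D_{a;i,l}(u)$'' indicates --- and this is the correct form; the displayed right-hand side of \eqref{DD relation induction}, where the factor $D_{a;i,l}(u)$ is absent from $\ell\,D_{a;i,j}(u+\ell)$, is a typo in the paper (already for $\ell=1$ and $j=i\neq l$ the constant coefficients of the two sides would be $0$ and $1$), so the version your argument proves is the one that should be stated and is what the subsequent induction in the proof of \eqref{up formula} actually uses.
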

\begin{proof}
These follow from the relation \eqref{DD relatiaon}.
For example, to get \eqref{DD relation induction}, 
replace $(i,j,k,l)$ by $(i,l,i,j)$ and set $v:=u+\ell$ in \eqref{DD relatiaon}, then simplify it. 
\end{proof}

\begin{Lemma}\label{Lemma: Daij-up-down}
The following relations hold for all $\ell\geq 1$:
\begin{align}
(u-v)[D_{a;i,j\downarrow \ell}(u), E_{a;k,l}(v)]&=\delta_{j,k}\ell D_{a;i,j\downarrow \ell-1}(u-1)\sum\limits_{\alpha=1}^{\mu_a}D_{a;i,\alpha}(u)\big(E_{a;\alpha,l}(v)-E_{a;\alpha,l}(u)\big),\label{down formula}\\
(u-v)[D_{a;i,j\uparrow \ell}(u), E_{a-1;k,l}(v)]&=\ell D_{a;i,l}(u)D_{a;i,j\uparrow \ell-1}(u+1)\big(E_{a-1;k,j}(u)-E_{a-1;k,j}(v)\big).\label{up formula}
\end{align}
\end{Lemma}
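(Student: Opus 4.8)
The plan is to prove both identities by induction on $\ell$, carrying out the argument for \eqref{down formula} in detail and handling \eqref{up formula} by the mirror-image computation. For the base case $\ell=1$ of \eqref{down formula} the empty product $D_{a;i,j\downarrow 0}(u-1)$ equals $1$, so the assertion reduces to
\[
(u-v)[D_{a;i,j}(u),E_{a;k,l}(v)]=\delta_{j,k}\sum_{\alpha}D_{a;i,\alpha}(u)\big(E_{a;\alpha,l}(v)-E_{a;\alpha,l}(u)\big),
\]
which is obtained from \eqref{ed1 relation} by interchanging $u$ and $v$ and relabelling the indices. Likewise the base case of \eqref{up formula} is the relation $(u-v)[D_{a;i,j}(u),E_{a-1;k,l}(v)]=D_{a;i,l}(u)\big(E_{a-1;k,j}(u)-E_{a-1;k,j}(v)\big)$, read off from \eqref{ed2 relation} after the shift $a\mapsto a-1$ and the same interchange of $u$ and $v$.

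First I would dispose of the case $j\neq k$ in \eqref{down formula}: the base relation then shows that every factor $D_{a;i,j}(u-m)$ commutes with $E_{a;k,l}(v)$, so both sides vanish, and one may assume $k=j$ throughout. For the inductive step I would write $D_{a;i,j\downarrow\ell}(u)=D_{a;i,j\downarrow\ell-1}(u)\,D_{a;i,j}(u-\ell+1)$ and apply the Leibniz rule to the commutator with $E_{a;j,l}(v)$. The second Leibniz term $[D_{a;i,j\downarrow\ell-1}(u),E_{a;j,l}(v)]\,D_{a;i,j}(u-\ell+1)$ feeds directly into the inductive hypothesis and supplies $(\ell-1)$ copies of the expected expression, while the first term $D_{a;i,j\downarrow\ell-1}(u)[D_{a;i,j}(u-\ell+1),E_{a;j,l}(v)]$ is governed by the base relation at the shifted parameter $u-\ell+1$ and should provide the remaining single copy.

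The hard part will be reconciling the spectral shifts. The base relation applied at $u-\ell+1$ carries the denominator $u-\ell+1-v$ rather than $u-v$, and the factors $D_{a;i,\alpha}$ produced in the two Leibniz terms sit on the wrong side of, and at the wrong parameter relative to, the surviving product $D_{a;i,j\downarrow\ell-1}(u-1)$. This is exactly the point at which Lemma \ref{Lemma:DD relation induction} is designed to intervene: \eqref{Dau-1 Da} lets me reorder adjacent factors $D_{a;i,j}$ at consecutive arguments while interchanging their column indices, and \eqref{DD relation induction} converts a product of two such factors into a combination whose integer coefficients are precisely $\ell-1$ and $1$. Together with the commutator \eqref{DD relatiaon}, used to transport the stray $D$-factor past the $E$-terms, these identities collapse the shifted denominators by the telescoping $\prod_{m=0}^{\ell-1}\frac{(u-m-1)-v}{(u-m)-v}=\frac{u-\ell-v}{u-v}$, so that $1-\frac{u-\ell-v}{u-v}=\frac{\ell}{u-v}$ accounts for the overall factor $\ell$ and for the reduction of all arguments to the canonical form $D_{a;i,j\downarrow\ell-1}(u-1)\sum_{\alpha}D_{a;i,\alpha}(u)(\cdots)$.

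Finally, \eqref{up formula} is proved by the same induction, now splitting off the last factor $D_{a;i,j}(u+\ell-1)$ and using the level-$(a{-}1)$ base relation above; here the absence of an $\alpha$-summation makes the bookkeeping lighter, and one checks that a single straightening move converts the leading factor's column index from $j$ to $l$, producing the prefactor $D_{a;i,l}(u)$ while leaving $D_{a;i,j\uparrow\ell-1}(u+1)$ untouched, with the increasing arguments telescoping to the factor $\ell$ exactly as before. The genuine obstacle in both cases is purely computational—keeping track of the $\alpha$-sum and the chain of shifted arguments—and it constitutes the generalization of \cite[Lemma 4.8]{BT18} from the Drinfeld shape $\mu=(1^n)$, where no column indices or $\alpha$-sums occur, to an arbitrary admissible shape $\mu$.
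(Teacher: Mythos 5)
Your proposal matches the paper's proof in all essentials: reduce to $k=j$ using the $\delta_{j,k}$ vanishing coming from \eqref{ed1 relation}, induct on $\ell$ by splitting off a single factor of the product (legitimate on either side by \eqref{Daijrs comm}), feed in the $\ell=1$ cases of \eqref{ed1 relation} and \eqref{ed2 relation} at shifted arguments, and straighten with \eqref{Dau-1 Da}, \eqref{DD relation induction} and \eqref{DE=Eu+1D}, the coefficient $\ell$ arising exactly from the denominator discrepancy your telescoping identity records. The only differences are presentational: since $(u-v-m)^{-1}$ does not exist in $Y_n[[u^{-1},v^{-1}]]$, the paper implements your telescoping rigorously by proving the equivalent pre-multiplied identity \eqref{down formula equivalent}, multiplying it on the left by $(u-v-\ell-1)D_{a;i,j}(u-\ell)$, using its specialization at $v=u-\ell$, and only at the end cancelling the (injectively acting) factor $(u-v-\ell)$ --- and note that transporting the stray $D$-factor past the $E$-terms is done with \eqref{ed1 relation} rather than \eqref{DD relatiaon}, a harmless slip in your sketch.
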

\begin{proof}
It is clear that \eqref{ed1 relation} implies that \eqref{down formula} holds for $j\neq k$,
so that for \eqref{down formula}, we may assume that $k=j$.
We actually prove it in the following equivalent form:
\begin{align}\label{down formula equivalent}
(u-v-\ell)D_{a;i,j\downarrow \ell}(u)E_{a;j,l}(v)&=(u-v)E_{a;j,l}(v)D_{a;i,j\downarrow \ell}(u)-\ell D_{a;i,j\downarrow \ell}(u)E_{a;j,l}(u)\nonumber\\
&+\ell D_{a;i,j\downarrow \ell-1}(u-1)\sum\limits_{\alpha\neq j}D_{a;i,\alpha}(u)\big(E_{a;\alpha,l}(v)-E_{a;\alpha,l}(u)\big).
\end{align}
This follows when $\ell=1$ from \eqref{ed1 relation}.
To prove \eqref{down formula equivalent} in general, we proceed by induction on $\ell$.
Given \eqref{down formula equivalent} for some $\ell\geq 1$,
multiply both sides on the left by $(u-v-\ell-1)D_{a;i,j}(u-\ell)$ and use \eqref{Daijrs comm} to deduce that:
\begin{align}\label{left multi down formula equivalent}
&(u-v-\ell-1)(u-v-\ell)D_{a;i,j\downarrow \ell+1}(u)E_{a;j,l}(v)\nonumber\\
=&(u-v)(u-v-\ell-1)D_{a;i,j}(u-\ell)E_{a;j,l}(v)D_{a;i,j\downarrow \ell}(u)-\ell(u-v-\ell-1) D_{a;i,j\downarrow \ell+1}(u)E_{a;j,l}(u)\nonumber\\
+&\ell(u-v-\ell-1) D_{a;i,j\downarrow \ell}(u-1)\sum\limits_{\alpha\neq j}D_{a;i,\alpha}(u)\big(E_{a;\alpha,l}(v)-E_{a;\alpha,l}(u)\big).
\end{align}
Using the case of $\ell=1$ in \eqref{down formula equivalent} and replacing $u$ by $u-\ell$ give that
\begin{align}\label{use formula}
 (u-v-\ell-1)D_{a;i,j}(u-\ell)E_{a;j,l}(v)&=(u-v-\ell)E_{a;j,l}(v)D_{a;i,j}(u-\ell)-D_{a;i,j}(u-\ell)E_{a;j,l}(u-\ell)\nonumber\\
&+\sum\limits_{\alpha\neq j}D_{a;i,\alpha}(u-\ell)\big(E_{a;\alpha,l}(v)-E_{a;\alpha,l}(u-\ell)\big).   
\end{align}
We can also set $v=u-\ell$ in \eqref{down formula equivalent}.
If $p\nmid\ell$, then we have
\begin{align}\label{revised identity}
 E_{a;j,l}(u-\ell)D_{a;i,j\downarrow\ell}(u)\!=\!D_{a;i,j\downarrow\ell}(u)E_{a;j,l}(u)\!-\!D_{a;i,j\downarrow \ell-1}(u-1)\sum\limits_{\alpha\neq j}D_{a;i,\alpha}(u)\big(E_{a;\alpha,l}(u-\ell)\!-\!E_{a;\alpha,l}(u)\big). 
\end{align}
If $p\mid \ell$, then \eqref{down formula equivalent} yields
$E_{a;j,l}(v)D_{a;i,j\downarrow \ell}(u)=D_{a;i,j\downarrow \ell}(u)E_{a;j,l}(v)$,
so that \eqref{revised identity} also holds for this case.

Then substituting \eqref{use formula} into \eqref{left multi down formula equivalent} and using \eqref{revised identity},
we obtain
\begin{align*}
 &(u-v-\ell-1)(u-v-\ell)D_{a;i,j\downarrow \ell+1}(u)E_{a;j,l}(v)\\
=&(u-v)(u-v-\ell)E_{a;j,l}(v)D_{a;i,j\downarrow \ell+1}(u)-(u-v)D_{a;i,j}(u-\ell)E_{a;j,l}(u-\ell)D_{a;i,j\downarrow \ell}(u)\\
&+(u-v)\sum\limits_{\alpha\neq j}D_{a;i,\alpha}(u-\ell)\big(E_{a;\alpha,l}(v)-E_{a;\alpha,l}(u-\ell)\big)D_{a;i,j\downarrow \ell}(u)\\
&-\ell(u-v-\ell-1) D_{a;i,j\downarrow \ell+1}(u)E_{a;j,l}(u)\\
&+\ell(u-v-\ell-1) D_{a;i,j\downarrow \ell}(u-1)\sum\limits_{\alpha\neq j}D_{a;i,\alpha}(u)\big(E_{a;\alpha,l}(v)-E_{a;\alpha,l}(u)\big)\\
=&(u-v)(u-v-\ell)E_{a;j,l}(v)D_{a;i,j\downarrow \ell+1}(u)-(\ell+1)(u-v-\ell)D_{a;i,j\downarrow \ell+1}(u)E_{a;j,l}(u)\\
&+(u-v)D_{a;i,j\downarrow \ell}(u-1)\sum\limits_{\alpha\neq j}D_{a;i,\alpha}(u)\big(E_{a;\alpha,l}(u-\ell)-E_{a;\alpha,l}(u)\big)\\
&+(u-v)\sum\limits_{\alpha\neq j}D_{a;i,\alpha}(u-\ell)\big(E_{a;\alpha,l}(v)-E_{a;\alpha,l}(u-\ell)\big)D_{a;i,j\downarrow \ell}(u)\\
&+\ell(u-v-\ell-1) D_{a;i,j\downarrow \ell}(u-1)\sum\limits_{\alpha\neq j}D_{a;i,\alpha}(u)\big(E_{a;\alpha,l}(v)-E_{a;\alpha,l}(u)\big).\\
\end{align*}
Using \eqref{Dau-1 Da} and simplifying the above,
we obtain
\begin{align*}
 &(u-v-\ell-1)(u-v-\ell)D_{a;i,j\downarrow \ell+1}(u)E_{a;j,l}(v)\\
=&(u-v)(u-v-\ell)E_{a;j,l}(v)D_{a;i,j\downarrow \ell+1}(u)-(\ell+1)(u-v-\ell)D_{a;i,j\downarrow \ell+1}(u)E_{a;j,l}(u)\\
&+(\ell+1)(u-v-\ell)D_{a;i,j\downarrow \ell}(u-1)\sum\limits_{\alpha\neq j}D_{a;i,\alpha}(u)\big(E_{a;\alpha,l}(v)-E_{a;\alpha,l}(u)\big).
\end{align*}
Dividing both sides by $(u-v-\ell)$, 
we obtain \eqref{down formula equivalent} with $\ell$ replaced by $\ell+1$,
as required.

The proof of \eqref{up formula} is similar.
One can show equivalently that
\begin{align}\label{up formula equiv}
(u-v)D_{a;i,j\uparrow \ell}(u)E_{a-1;k,l}(v)=&(u-v)E_{a-1;k,l}(v)D_{a;i,j\uparrow \ell}(u)\nonumber\\
&+\ell D_{a;i,l}(u)D_{a;i,j\uparrow \ell-1}(u+1)\big(E_{a-1;k,j}(u)-E_{a-1;k,j}(v)\big). 
\end{align}
This follows when $\ell=1$ from \eqref{ed2 relation}.
Assume that \eqref{up formula equiv} holds for some $\ell\geq 1$.
Multiplying both sides on the left by $(u-v+\ell)D_{a;i,j}(u+\ell)$,
we obtain
\begin{align}\label{case2-1}
&(u-v)(u-v+\ell)D_{a;i,j\uparrow \ell+1}(u)E_{a-1;k,l}(v)\nonumber\\
=&(u-v)(u-v+\ell)D_{a;i,j}(u+\ell)E_{a-1;k,l}(v)D_{a;i,j\uparrow \ell}(u)\nonumber\\
+&\ell(u-v+\ell)D_{a;i,j}(u+\ell)D_{a;i,l}(u)D_{a;i,j\uparrow \ell-1}(u+1)\big(E_{a-1;k,j}(u)-E_{a-1;k,j}(v)\big).
\end{align}
Using the case of $\ell=1$ in \eqref{up formula equiv} and replacing $u$ by $u+\ell$ give that
\begin{align}\label{case2-2}
 (u-v+\ell)D_{a;i,j}(u+\ell)E_{a-1;k,l}(v)&=(u-v+\ell)E_{a-1;k,l}(v)D_{a;i,j}(u+\ell)\nonumber\\
&+D_{a;i,\ell}(u+\ell)\big(E_{a-1;k,j}(u+\ell)-E_{a-1;k,j}(v)\big).   
\end{align}
Moreover, the case of $l=j$ in \eqref{up formula equiv} yields
\begin{align}\label{case2-3}
(u-v)D_{a;i,j\uparrow \ell}(u)E_{a-1;k,j}(v)=&(u-v)E_{a-1;k,j}(v)D_{a;i,j\uparrow \ell}(u)\nonumber\\
&+\ell D_{a;i,j\uparrow \ell}(u)\big(E_{a-1;k,j}(u)-E_{a-1;k,j}(v)\big). 
\end{align}
Then substituting \eqref{case2-2} into \eqref{case2-1} and using 
\eqref{DE=Eu+1D}, \eqref{DD relation induction} and \eqref{case2-3},
we obtain
\begin{align}\label{case2-4}
(u-v)(u-v+\ell)&D_{a;i,j\uparrow \ell+1}(u)E_{a-1;k,l}(v)=(u-v)(u-v+\ell)E_{a-1;k,l}(v)D_{a;i,j\uparrow \ell+1}(u)\nonumber\\
+&(\ell+1)(u-v+\ell)D_{a;i,l}(u)D_{a;i,j\uparrow \ell}(u+1)\big(E_{a-1;k,j}(u)-E_{a-1;k,j}(v)\big).
\end{align}
Dividing both sides by $(u-v+\ell)$, 
we obtain \eqref{up formula equiv} with $\ell$ replaced by $\ell+1$,
as required.
\end{proof}

\subsection{The modular shifted Yangian}\label{subsection:shifted yangian}
Pick a shift matrix $\sigma$ and let $\mu=(\mu_1,\dots,\mu_m)$ be an admissible shape to $\sigma$ as in Section \ref{Section: shift matrix and shifted currentalgebra}.
The {\em shifted Yangian} is the subalgebra $Y_n(\sigma)\subseteq Y_n$ generated by the following elements:
\begin{align}\label{parabolic generator of shifted Yangian}
&\{D_{a;i,j}^{(r)};~1\leq a\leq m, 1\leq i,j\leq \mu_a, r>0\},\nonumber\\
&\{E_{a;i,j}^{(r)},F_{a;j,i}^{(s)};~1\leq a<m, 1\leq i\leq \mu_a, 1\leq j\leq \mu_{a+1}, r>s_{a,a+1}^{\mu}, s>s_{a+1,a}^{\mu}\}. 
\end{align}
Notice that when $\sigma$ is the zero matrix we have $Y_n(\sigma)=Y_n$ (see Theorem \ref{Theorem: yn gb parabolic generators}).
The elements $E_{a,b;i,j}^{(r)}\in Y_n$ do not lie in the subalgebra $Y_n(\sigma)$ for a general shift matrix.
Following \cite[(3.15)-(3.16)]{BK06},
we define elements ${^\sigma}E_{a,b,i,j}^{(r)}$ inductively by
\begin{align}\label{sigma Eabij}
{^\sigma}E_{a,b;i,j}^{(r)}:=[{^\sigma}E_{a,b-1;i,k}^{(r-s_{b-1,b}^{\mu})},E_{b-1;k,j}^{(s_{b-1,b}^{\mu}+1)}]
\end{align}
for $1\leq a<b\leq m$,
$1\leq i\leq \mu_a$, $1\leq j\leq \mu_b$ and $r>s_{a,b}^{\mu}$,
where $1\leq k\leq \mu_{b-1}$. 
The definition is independent of the choice of $k$, see for instance \cite[(6.9)]{BK05} and \cite[(3.15)]{BK06}.
Similarly, using the same indices except for $s>s_{b,a}^{\mu}$, we define ${^\sigma}F_{b,a,j,i}^{(s)}$ by
\begin{align}\label{sigma Fbaji}
{^\sigma}F_{b,a;j,i}^{(s)}:=[F_{b-1;j,k}^{(s_{b,b-1}^{\mu}+1)}, {^\sigma}F_{b-1,a;k,i}^{(s-s_{b,b-1}^{\mu})}].
\end{align}

\begin{Lemma}\label{lemma:grYnsigma=ugsigma}
For any shift matrix $\sigma$ and admissible shape $\mu$,
$1\leq a<b\leq m$, $1\leq i\leq \mu_a$, $1\leq j\leq\mu_b$ and $r\geq s_{a,b}^{\mu}$, $s\geq s_{b,a}^{\mu}$,
we have that ${^\sigma}E_{a,b;i,j}^{(r+1)}\in{\rm F}_r Y_n$ and ${^\sigma}F_{b,a,j,i}^{(s+1)}\in {\rm F}_s Y_n$.
Moreover, under the identification $\chi$ in Lemma \ref{Lemma:chi iso}, we have that
\begin{align}\label{gr sigama EF abij}
\gr_r{^\sigma}E_{a,b;i,j}^{(r+1)}=e_{a,b;i,j}t^r,\ \ \ \gr_r{^\sigma}F_{b,a;j,i}^{(s+1)}=e_{b,a;j,i}t^s.
\end{align}
Hence, $\gr Y_n(\sigma)$ is identified with the subalgebra $U(\fg_\sigma)$ of $U(\fg)$.
\end{Lemma}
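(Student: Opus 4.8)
The plan is to prove the filtered-degree bounds and the leading-symbol formulas \eqref{gr sigama EF abij} simultaneously by induction on $b-a$, working throughout with the loop filtration and the isomorphism $\chi\colon U(\fg)\xrightarrow{\sim}\gr Y_n$ of Lemma \ref{Lemma:chi iso}; the final identification is then read off from the resulting list of leading symbols. For the base case $b=a+1$ one has ${}^{\sigma}E_{a,a+1;i,j}^{(r)}=E_{a;i,j}^{(r)}$ and ${}^{\sigma}F_{a+1,a;j,i}^{(s)}=F_{a;j,i}^{(s)}$, so \eqref{identification} immediately gives $\gr_r{}^{\sigma}E_{a,a+1;i,j}^{(r+1)}=e_{a,a+1;i,j}t^r$ and $\gr_s{}^{\sigma}F_{a+1,a;j,i}^{(s+1)}=e_{a+1,a;j,i}t^s$, now recorded only on the shifted ranges $r\geq s_{a,a+1}^{\mu}$ and $s\geq s_{a+1,a}^{\mu}$.

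For the inductive step I treat ${}^{\sigma}E$; the argument for ${}^{\sigma}F$ via \eqref{sigma Fbaji} is identical. Fix $a<b$ and a superscript $r+1$ with $r\geq s_{a,b}^{\mu}$. The key bookkeeping is the additivity $s_{a,b-1}^{\mu}+s_{b-1,b}^{\mu}=s_{a,b}^{\mu}$, which follows from the shift-matrix relation \eqref{shift matrix condition} applied to the monotone partial sums $p_{a+1}(\mu)\leq p_b(\mu)\leq p_{b+1}(\mu)$ together with \eqref{sab mu}. Consequently, in the defining commutator \eqref{sigma Eabij} the first factor ${}^{\sigma}E_{a,b-1;i,k}^{(r+1-s_{b-1,b}^{\mu})}$ has superscript $\geq s_{a,b-1}^{\mu}+1$, so by the inductive hypothesis it lies in ${\rm F}_{r-s_{b-1,b}^{\mu}}Y_n$ with leading symbol $e_{a,b-1;i,k}t^{\,r-s_{b-1,b}^{\mu}}$, while the second factor $E_{b-1;k,j}^{(s_{b-1,b}^{\mu}+1)}$ lies in ${\rm F}_{s_{b-1,b}^{\mu}}Y_n$ with leading symbol $e_{b-1,b;k,j}t^{\,s_{b-1,b}^{\mu}}$ by \eqref{identification}. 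Since a commutator of elements of filtered degrees $p$ and $q$ lies in filtered degree $p+q$ and its symbol there is the bracket of the two symbols computed in $\gr Y_n\cong U(\fg)$, the rule \eqref{Lie bracket current lie 2} yields
\begin{align*}
\gr_r{}^{\sigma}E_{a,b;i,j}^{(r+1)}
&=\big[\,e_{a,b-1;i,k}t^{\,r-s_{b-1,b}^{\mu}},\ e_{b-1,b;k,j}t^{\,s_{b-1,b}^{\mu}}\,\big]\\
&=e_{a,b;i,j}t^{\,r},
\end{align*}
the second ($\delta_{b,a}$) term of \eqref{Lie bracket current lie 2} vanishing because $a<b$. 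As the right-hand side is nonzero, ${}^{\sigma}E_{a,b;i,j}^{(r+1)}$ has filtered degree exactly $r$; this proves ${}^{\sigma}E_{a,b;i,j}^{(r+1)}\in{\rm F}_rY_n$ and the first formula of \eqref{gr sigama EF abij}, and in passing confirms independence of the auxiliary index $k$. The same computation with $s_{b,b-1}^{\mu}+s_{b-1,a}^{\mu}=s_{b,a}^{\mu}$ gives $\gr_s{}^{\sigma}F_{b,a;j,i}^{(s+1)}=e_{b,a;j,i}t^s$.

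It remains to identify $\gr Y_n(\sigma)$. Combining the two formulas just proved with $\gr_rD_{a;i,j}^{(r+1)}=e_{a,a;i,j}t^r$ and $s_{a,a}^{\mu}=0$ (by admissibility), the leading symbols of the generators \eqref{parabolic generator of shifted Yangian} of $Y_n(\sigma)$ run precisely through the spanning set \eqref{shifted current algebra spanned set} of $\fg_\sigma$; hence $\fg_\sigma\subseteq\gr Y_n(\sigma)$, and therefore $U(\fg_\sigma)\subseteq\gr Y_n(\sigma)$. For the opposite inclusion I would establish a PBW basis for $Y_n(\sigma)$: the ordered monomials in $\{D_{a;i,j}^{(r+1)},{}^{\sigma}E_{a,b;i,j}^{(r+1)},{}^{\sigma}F_{b,a;j,i}^{(r+1)}\}$ over the shifted ranges have leading symbols equal to the PBW basis of $U(\fg_\sigma)$, so they are linearly independent in $Y_n$ by Theorem \ref{Theorem:PBW parabolic generators}, while a straightening argument using the relations of Theorem \ref{Theorem:parabolic presentation} and the graded bracket \eqref{gryn+relation} shows they also span $Y_n(\sigma)$; this forces $\gr Y_n(\sigma)=U(\fg_\sigma)$. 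The main obstacle is exactly this spanning statement: one must check that reducing an arbitrary product of the simple generators to an ordered monomial never leaves $Y_n(\sigma)$ nor violates the shifted superscript ranges, which is precisely where the shift-matrix additivity and the inductive definitions \eqref{sigma Eabij}--\eqref{sigma Fbaji} carry the load.
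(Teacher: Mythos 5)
Your proposal is correct and follows essentially the same route as the paper: the same base case $b=a+1$ via \eqref{identification}, the same induction on $b-a$ through the defining commutator \eqref{sigma Eabij} with the shift additivity $s_{a,b-1}^{\mu}+s_{b-1,b}^{\mu}=s_{a,b}^{\mu}$, and the same leading-symbol computation via \eqref{Lie bracket current lie 2}; your expanded discussion of the final identification $\gr Y_n(\sigma)=U(\fg_\sigma)$ by a spanning/straightening argument is precisely what the paper compresses into ``Hence'' and delegates to Theorem \ref{Theorem: PBW parabolic for Ynsigma} and \cite[Theorem 3.2(iv)]{BK06}. One small caution: your symbol computation shows only that the \emph{leading term} of ${^\sigma}E_{a,b;i,j}^{(r)}$ is independent of the auxiliary index $k$, not the element itself --- the latter is the cited fact from \cite{BK05} and \cite{BK06} and is not actually needed for the lemma.
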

\begin{proof}
We just prove the statements about ${^\sigma}E_{a,b;i,j}^{(r)}$.
When $b=a+1$, this follows from \eqref{identification}.
For general, we proceed by induction on $b-a$.
In view of \eqref{sigma Eabij},
the induction hypothesis implies that ${^\sigma}E_{a,b;i,j}^{(r+1)}\in {\rm F}_{r-s_{b-1,b}^{\mu}+s_{b-1,b}^{\mu}} Y_n$ as required.
Moreover, by induction again,
its image in the associated graded algebra is 
\[
[e_{a,b-1;i,k}t^{r-s_{b-1,b}^{\mu}},e_{b-1,b;k,j}t^{s_{b-1,b}^{\mu}}]=e_{a,b;i,j}t^r
\]
using \eqref{Lie bracket current lie 2}.
\end{proof}

From this and the PBW theorem for $U(\fg_\sigma)$, 
we also get the following PBW theorem for $Y_n(\sigma)$ (see also \cite[Theorem 3.2(iv)]{BK06}).
\begin{Theorem}\label{Theorem: PBW parabolic for Ynsigma}
Let 
\begin{align*}
I(\sigma)&:=I=\{D_{a;i,j}^{(r)};~1\leq a\leq m, 1\leq i,j\leq \mu_a, r>0\},\label{def:I }\\
J(\sigma)&:=\{{^\sigma}E_{a,b;i,j}^{(r)};~1\leq a<b\leq m, 1\leq i\leq \mu_a, 1\leq j\leq \mu_b, r>s_{a,b}^{\mu}\},\\
K(\sigma)&:=\{{^\sigma}F_{b,a;i,j}^{(r)};~1\leq a<b\leq m, 1\leq i\leq \mu_b, 1\leq j\leq \mu_a, r>s_{b,a}^{\mu}\}.
\end{align*}
Then the set of all monomials in the union of the sets $I(\sigma)$,
$J(\sigma)$ and $K(\sigma)$ taken in some fixed order forms a basis for $Y_n(\sigma)$.
\end{Theorem}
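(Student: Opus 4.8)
The plan is to run the standard filtered–graded lifting argument, using the loop filtration together with the identification of associated graded algebras already recorded in Lemma \ref{lemma:grYnsigma=ugsigma}. I equip $Y_n(\sigma)$ with the filtration obtained by restricting the loop filtration \eqref{loop filtration} of $Y_n$, so that $\gr Y_n(\sigma)$ is the subalgebra of $\gr Y_n = U(\fg)$ generated by the leading symbols of the parabolic generators. By Lemma \ref{lemma:grYnsigma=ugsigma} this subalgebra is exactly $U(\fg_{\sigma})$, and this is the essential input.

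First I would record the leading symbols of the three families. By \eqref{identification} and \eqref{gr sigama EF abij} we have $\gr_r D_{a;i,j}^{(r+1)} = e_{a,a;i,j}t^r$, $\gr_r {^\sigma}E_{a,b;i,j}^{(r+1)} = e_{a,b;i,j}t^r$ and $\gr_r {^\sigma}F_{b,a;i,j}^{(r+1)} = e_{b,a;i,j}t^r$. Tracking the degree shift $r \mapsto r-1$, the index conditions defining $I(\sigma)$, $J(\sigma)$ and $K(\sigma)$ (namely $r>0$ for $D$ and $r>s_{a,b}^{\mu}$ for ${^\sigma}E$ and ${^\sigma}F$) correspond precisely to the spanning set \eqref{shifted current algebra spanned set} of $\fg_{\sigma}$. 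Thus the leading symbols of the elements of $I(\sigma)\cup J(\sigma)\cup K(\sigma)$ are exactly the basis $\{e_{a,b;i,j}t^r\}$ of $\fg_{\sigma}$, each sitting in filtered degree equal to its power of $t$. I would also note that these elements generate $Y_n(\sigma)$: the generators in \eqref{parabolic generator of shifted Yangian} are among them, and the higher root vectors ${^\sigma}E_{a,b;i,j}^{(r)}$, ${^\sigma}F_{b,a;i,j}^{(r)}$ with $b>a+1$ are iterated commutators of generators by \eqref{sigma Eabij} and \eqref{sigma Fbaji}, hence lie in $Y_n(\sigma)$.

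Next I would lift the ordinary Poincar\'e--Birkhoff--Witt basis. By the PBW theorem for $U(\fg_{\sigma})$, the ordered monomials in $\{e_{a,b;i,j}t^r\}$, taken in the chosen order, form a basis of $U(\fg_{\sigma})$. Because $\gr Y_n(\sigma) = U(\fg_{\sigma})$ is a domain, the leading symbol of a product of generators is the product of their leading symbols, so a fixed-order monomial in $I(\sigma)\cup J(\sigma)\cup K(\sigma)$ has leading symbol equal to the corresponding ordered monomial in the $e_{a,b;i,j}t^r$. Since the loop filtration is exhaustive and bounded below, the standard fact that a family whose leading symbols form a basis of $\gr$ is itself a basis then applies: spanning follows by lifting a symbol and inducting on filtered degree, and linear independence follows from the injectivity of the symbol map on each filtered piece. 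This yields the desired basis of $Y_n(\sigma)$.

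The genuine content sits in Lemma \ref{lemma:grYnsigma=ugsigma}, so I do not expect a serious new obstacle here; the theorem is essentially its corollary. The one point needing care is the bookkeeping of the degree shift, so that the constraints $r>s_{a,b}^{\mu}$ on the Yangian generators line up \emph{exactly} with the inequalities $r\geq s_{a,b}^{\mu}$ defining $\fg_{\sigma}$ in \eqref{shifted current algebra spanned set}. Getting this matching right is precisely what guarantees that the lifted monomials are neither too few (for spanning) nor too many (for independence).
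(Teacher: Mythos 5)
Your proposal is correct and follows essentially the same route as the paper, which obtains this theorem directly from Lemma \ref{lemma:grYnsigma=ugsigma} (identifying $\gr Y_n(\sigma)$ with $U(\fg_\sigma)$ and computing the leading symbols of the parabolic generators) combined with the PBW theorem for $U(\fg_\sigma)$. The details you spell out—the degree-shift bookkeeping matching $r>s_{a,b}^{\mu}$ with $r\geq s_{a,b}^{\mu}$ in \eqref{shifted current algebra spanned set}, and the standard filtered--graded lifting of a basis of symbols—are exactly the argument the paper leaves implicit.
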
    

We have defined $Y_n(\sigma)$ as a subalgebra of $Y_n$.
It can also be defined by generators and relations: 
the following theorem shows that it has its own parabolic presentation (cf. \cite[(3.3)-(3.14)]{BK06}).

\begin{Theorem}\label{Theorem:parabolic presentation of shifted Yangain}
The shifted Yangian $Y_n(\sigma)$ is generated by the elements \eqref{parabolic generator of shifted Yangian} 
subject to the relations \eqref{pr1}-\eqref{pr14},
interpreting admissible $a, b, f, g, k, l, r, s, t$ so that the left hand sides of these relations only involve generators of $Y_n(\sigma)$.
\end{Theorem}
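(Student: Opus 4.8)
The plan is to mirror the proof of Theorem~\ref{Theorem:parabolic presentation}, carrying the shift constraints through every step. Write $\widehat{Y}_n(\sigma)$ for the abstract algebra on the generators \eqref{parabolic generator of shifted Yangian} modulo the relations \eqref{pr1}--\eqref{pr14}, where each index is restricted so that the left-hand sides involve only these generators, any out-of-range generator $E_{a;i,j}^{(r)}$ (with $r\le s_{a,a+1}^{\mu}$) or $F_{a;i,j}^{(r)}$ (with $r\le s_{a+1,a}^{\mu}$) being read as $0$. The first task is to check that these relations genuinely hold among the corresponding elements of $Y_n(\sigma)\subseteq Y_n$. Since the unrestricted relations \eqref{pr1}--\eqref{pr14} hold in $Y_n$ by Theorem~\ref{Theorem:parabolic presentation} and the arguments of \cite{BK06} transfer to positive characteristic (as in the proof of Theorem~\ref{Theorem:parabolic presentation}, reducing to low rank by means of the shift map of Lemma~\ref{Lemma:property of psi}), the restricted relations hold in $Y_n(\sigma)$. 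Consequently the assignment of each generator to the element of the same name extends to a surjective algebra homomorphism $\theta\colon\widehat{Y}_n(\sigma)\to Y_n(\sigma)$.

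It remains to prove that $\theta$ is injective, and I would do this by a filtered comparison with $U(\fg_\sigma)$. Define the higher root elements ${^\sigma}E_{a,b;i,j}^{(r)}$ and ${^\sigma}F_{b,a;j,i}^{(r)}$ inside $\widehat{Y}_n(\sigma)$ by the inductive formulas \eqref{sigma Eabij} and \eqref{sigma Fbaji}, and filter $\widehat{Y}_n(\sigma)$ by assigning a generator with superscript $r$ the degree $r-1$. The central claim is that the symbols of these higher root elements satisfy, in $\gr\widehat{Y}_n(\sigma)$, the bracket relations \eqref{Lie bracket current lie 2} of $\fg_\sigma$; this is the shifted counterpart of the identity \eqref{gryn+relation} used for the unshifted presentation. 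Granting this, there is a Lie algebra map $\fg_\sigma\to\gr\widehat{Y}_n(\sigma)$ sending each basis vector of \eqref{shifted current algebra spanned set} to the symbol of the corresponding generator, which extends to a surjective algebra homomorphism $\pi\colon U(\fg_\sigma)\twoheadrightarrow\gr\widehat{Y}_n(\sigma)$.

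To finish, I would compose $\pi$ with the graded map induced by $\theta$. By Lemma~\ref{lemma:grYnsigma=ugsigma} we have $\gr Y_n(\sigma)\cong U(\fg_\sigma)$ with $\gr_r{^\sigma}E_{a,b;i,j}^{(r+1)}=e_{a,b;i,j}t^r$ and $\gr_r{^\sigma}F_{b,a;j,i}^{(r+1)}=e_{b,a;j,i}t^r$, so the composite
\[
U(\fg_\sigma)\xrightarrow{\ \pi\ }\gr\widehat{Y}_n(\sigma)\xrightarrow{\ \gr\theta\ }\gr Y_n(\sigma)\cong U(\fg_\sigma)
\]
fixes every generator $e_{a,b;i,j}t^r$ and is therefore the identity. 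As $\pi$ is surjective and the composite is an isomorphism, both $\pi$ and $\gr\theta$ are isomorphisms; hence $\theta$ itself is an isomorphism. Equivalently, this shows that the ordered monomials of Theorem~\ref{Theorem: PBW parabolic for Ynsigma} span $\widehat{Y}_n(\sigma)$, and since $\theta$ carries them to a basis of $Y_n(\sigma)$ they must already be linearly independent, so $\theta$ is bijective.

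The main obstacle is the central claim, namely the verification of \eqref{Lie bracket current lie 2} for the symbols of the higher root elements in $\gr\widehat{Y}_n(\sigma)$. Because the generators are available only for $r>s_{a,b}^{\mu}$ and each ${^\sigma}E_{a,b;i,j}^{(r)}$ is assembled by the shifted bracketing \eqref{sigma Eabij}, the unshifted computation of \cite[Lemma~6.7]{BK05} cannot simply be quoted: every induction on $b-a$ must respect the constraint that intermediate superscripts exceed the relevant $s_{a,b}^{\mu}$, and one must re-establish in the presented algebra that \eqref{sigma Eabij} is independent of the auxiliary index $k$. As in the unshifted case I would reduce to the rank-two and rank-three situations via the shift map (Lemma~\ref{Lemma:property of psi}), where the bracket identities can be checked directly from \eqref{pr7}, \eqref{pr9}, \eqref{pr11} and \eqref{pr13}, and then transport them to general $n$.
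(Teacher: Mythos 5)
Your overall architecture is exactly the one the paper intends: its proof is deferred to the argument of \cite[Theorem 4.15]{BT18}, which proceeds precisely as you do — the relations hold in $Y_n(\sigma)$ because they hold in $Y_n$ (Theorem \ref{Theorem:parabolic presentation}), giving the surjection $\theta$, and injectivity is obtained by comparing $\gr\widehat{Y}_n(\sigma)$ with $U(\fg_\sigma)$ via Lemma \ref{lemma:grYnsigma=ugsigma} and the PBW Theorem \ref{Theorem: PBW parabolic for Ynsigma}. Your map $\pi\colon U(\fg_\sigma)\twoheadrightarrow\gr\widehat{Y}_n(\sigma)$ is a clean repackaging of the standard spanning-of-ordered-monomials argument, as you note yourself, and you correctly isolate the crux: the bracket identities \eqref{Lie bracket current lie 2} for the symbols of the higher root elements \eqref{sigma Eabij}--\eqref{sigma Fbaji}, including the independence of the auxiliary index $k$, must be re-established inside the presented algebra.

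One step would fail as written, and it is in that crux. You propose to reduce the central claim to low rank ``via the shift map (Lemma \ref{Lemma:property of psi})''; but $\psi_k$ is built from $\omega_n\colon T(u)\mapsto T(-u)^{-1}$ on the concrete RTT algebra, so it simply does not exist on the abstract algebra $\widehat{Y}_n(\sigma)$ — and the bracket identities must be verified there, not in $Y_n(\sigma)$, where quoting them would be circular. (The shift map is legitimate only in your first step, checking that the relations hold in $Y_n$.) The correct substitute is the evident filtered homomorphism from the presented algebra attached to a subinterval of the composition, say $(\mu_a,\mu_{a+1})$ or $(\mu_a,\mu_{a+1},\mu_{a+2})$, into $\widehat{Y}_n(\sigma)$ — generators go to generators and the imposed relations are a subset — combined with the induction on $b-a$ as in \cite[Lemma 6.7]{BK05}, which is how the computation is organized in \cite{BT18}. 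A second, smaller caution concerns your blanket convention of reading out-of-range generators as $0$: this is not the paper's convention (the paper only imposes relations all of whose terms make sense), and while it is harmless in \eqref{pr7}--\eqref{pr8}, where the stray terms with $t\leq s_{a,a+1}^{\mu}$ cancel identically between the two sums, a relation whose right-hand side genuinely left the shifted range would, under your convention, impose an identity that is false in $Y_n(\sigma)$ and destroy the existence of $\theta$. So you must check relation by relation (in the form the relations take in \cite{BK06}, where superscripts $r,r+1$ sit on one root vector and $s,s+1$ on the other) that once the left-hand side involves only generators of $Y_n(\sigma)$, no out-of-range term survives on the right.
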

\begin{proof}
The proof, 
which uses Theorem \ref{Theorem:parabolic presentation} and Lemma \ref{lemma:grYnsigma=ugsigma},
is similar to the proof of \cite[Theorem 4.15]{BT18}, 
and will be skipped here.
\end{proof}

\subsection{Automorphisms}\label{section: auto}
We list the following (anti)automorphisms of $Y_n$ which are needed in the next section;
see for instance \cite[Section 4.5]{BT18}.
\begin{enumerate}
\item(``Transposition'')
Let $\tau:Y_n\rightarrow Y_n$ be the anti-automorphism defined by $\tau(t_{i,j}^{(r)})=t_{j,i}^{(r)}$
or, on parabolic generators, $\tau(D_{a;i,j}(u))=D_{a;j,i}(u)$, $\tau(E_{a,b;i,j}(u))=F_{b,a;j,i}(u)$,
$\tau(F_{b,a;i,j}(u))=E_{a,b;j,i}(u)$ (cf. \cite[(6.6)-(6.8)]{BK05} and \cite[(3.20)]{BK06}).
\item (``Change of shift matrix'')
Suppose that $\sigma$ is a shift matrix with an admissible shape $\mu$ as usual and $\dot{\sigma}=(\dot{s}_{i,j})_{1\leq i,j\leq n}$
is another shift matrix satisfying $s_{i,i+1}+s_{i+1,i}=\dot{s}_{i,i+1}+\dot{s}_{i+1,i}$ for all $i=1,\dots,n-1$.
Clearly, $\mu$ is also admissible for $\dot{\sigma}$.
According to \cite[(3.21)]{BK06},
there is a unique algebra isomorphism $\iota: Y_n(\sigma)\Tilde{\rightarrow} Y_n(\dot{\sigma})$ defined by
\[
\iota(D_{a;i,j}^{(r)})=D_{a;i,j}^{(r)},
\iota({^{\sigma}}E_{a;i,j}^{(r)})={^{\dot{\sigma}}}E_{a;i,j}^{(r-s_{a,a+1}^{\mu}+\dot{s}_{a,a+1}^{\mu})},
\iota({^{\sigma}}F_{a;i,j}^{(r)})={^{\dot{\sigma}}}F_{a;i,j}^{(r-s_{a+1,a}^{\mu}+\dot{s}_{a+1,a}^{\mu})}.
\]
\item(``Permutation'')
Let $S_{n}$ be the Symmetric group on $n$ objects.
For each $w\in S_n$, there is an automorphism
$w:Y_n\rightarrow Y_n$ sending $t_{i,j}^{(r)} \mapsto t_{w(i),
  w(j)}^{(r)}$. This is clear from the RTT relation (\ref{RTT relations}).
\end{enumerate}

\begin{Lemma}\label{lemma:DEFab-DEFa}
Let $1\leq a<b\leq m$. Then the following statements hold:
\begin{enumerate}
    \item For any admissible $i,j$, if $i=j$, then the permutation automorphism of $Y_n$ by the transposition $(p_a(\mu)+1,p_a(\mu)+i)$ maps $D_{a;i,j}(u)\mapsto D_{a;1,1}(u)$, 
    and if $i\neq j$, then the permutation automorphism by $p_a(\mu)+i\mapsto p_a(\mu)+1$, $p_a(\mu)+j\mapsto p_a(\mu)+2$ maps $D_{a;i,j}(u)\mapsto D_{a;1,2}(u)$.
    \item For any admissible $i,j$, the permutation automorphism of $Y_n$ by the transposition $(p_{a+1}(\mu)+1,p_b(\mu)+j)$ maps
    $E_{a,b;i,j}(u)\mapsto E_{a;i,1}(u)$ and $F_{b,a;j,i}(u)\mapsto F_{a;1,i}(u)$.
\end{enumerate}
\end{Lemma}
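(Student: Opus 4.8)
The plan is to realise each permutation automorphism $w$ as the entrywise substitution $t_{p,q}(u)\mapsto t_{w(p),w(q)}(u)$, i.e. as the passage from $T(u)$ to the matrix $\widetilde T(u)$ with entries $\widetilde t_{p,q}(u)=t_{w(p),w(q)}(u)$, and then to read off the images of the parabolic generators by evaluating the quasideterminant formulas \eqref{quasiD}--\eqref{quasiF} on $\widetilde T(u)$ in place of $T(u)$. Since $w$ is an algebra automorphism it respects products and inverses, so $w(D_{a;i,j}(u))$, $w(E_{a,b;i,j}(u))$ and $w(F_{b,a;j,i}(u))$ are exactly the corresponding quasideterminants computed from $\widetilde T(u)$. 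Everything then reduces to bookkeeping of which blocks $\widetilde T(u)$ and $T(u)$ share.

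For (1) both permutations act only inside block $a$ and fix every index outside it; write $\bar w$ for the induced permutation of $\{1,\dots,\mu_a\}$, so that $w(p_a(\mu)+k)=p_a(\mu)+\bar w(k)$. Then $\widetilde T(u)$ agrees with $T(u)$ on every block ${}^\mu T_{c,d}(u)$ with $c,d\le a-1$, while the block-$a$ rows and columns are relabelled by $\bar w$. Writing the quasideterminant \eqref{quasiD} as $D_a=B-CA^{-1}D$ with $A$ the part in blocks $1,\dots,a-1$ and $B={}^\mu T_{a,a}(u)$, passing to $\widetilde T(u)$ merely permutes the rows and columns of $B$, the rows of $C$ and the columns of $D$ by $\bar w$, whence $w(D_{a;i,j}(u))=D_{a;\bar w(i),\bar w(j)}(u)$. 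Since $\bar w(i)=1$ in the first case and $(\bar w(i),\bar w(j))=(1,2)$ in the second, this gives $D_{a;1,1}(u)$ and $D_{a;1,2}(u)$ respectively.

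The main obstacle is (2): the transposition $\tau=(p_{a+1}(\mu)+1,\,p_b(\mu)+j)$ mixes blocks $a+1$ and $b$, so it does not preserve the block flag, and hence conjugation by $\tau$ does not respect the Gauss factorisation \eqref{gauss decomp} globally. The way around this is that $E_{a,b}(u)$, by \eqref{quasiE}, involves only the blocks $1,\dots,a$ together with the single block-column $b$, and the $(i,j)$-entry of that quasideterminant depends on the last block-column solely through its $j$-th column. Now $\tau$ fixes every index in blocks $1,\dots,a$, so $D_a(\widetilde T)=D_a(T)$ (hence the prefactor $D'_a(\widetilde T)=D'_a(T)$ as well) and all blocks used except the last column are unchanged; moreover, for each row $p$ in blocks $1,\dots,a$ one has $\widetilde t_{p,\,p_b(\mu)+j}(u)=t_{p,\,p_{a+1}(\mu)+1}(u)$, i.e. the $j$-th column of the last block of $\widetilde T(u)$ is precisely the first column of block $a+1$ of $T(u)$. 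Comparing \eqref{quasiE} for $E_{a,b;i,j}(\widetilde T)$ with \eqref{quasiE} for $E_{a,a+1;i,1}(T)$ entry by entry then yields $w(E_{a,b;i,j}(u))=E_{a;i,1}(u)$. The assertion for $F_{b,a;j,i}(u)$ is identical via \eqref{quasiF}, where $\tau$ instead replaces the $j$-th row of the last block-row by the first row of block $a+1$, giving $F_{a;1,i}(u)$.
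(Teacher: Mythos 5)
Your proposal is correct and takes essentially the same approach as the paper, whose entire proof is the one-line observation that the lemma follows from the quasideterminant formulas \eqref{quasiD}--\eqref{quasiF}. You have simply made explicit the bookkeeping the paper leaves to the reader --- in particular the key point in (2) that the transposition $(p_{a+1}(\mu)+1,\,p_b(\mu)+j)$ fixes every index in blocks $1,\dots,a$, so that applying the automorphism to the expression \eqref{quasiE} (resp.\ \eqref{quasiF}) only substitutes the single block-column (resp.\ block-row) indexed by $p_b(\mu)+j$, turning it into the first column (resp.\ row) of block $a+1$.
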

\begin{proof}
This follows from \eqref{quasiD}-\eqref{quasiF}.  
\end{proof}

\section{Centers of $Y_n$ and $Y_n(\sigma)$}\label{section name:Centers}
Continue with a fixed shift matrix $\sigma$ and let $\mu=(\mu_1,\dots,\mu_m)$ be an admissible shape to $\sigma$.
In this section, we describe the center of the modular shifted Yangian by using parabolic generators. 
\subsection{Harish-Chandra center}\label{section name HC center}
Following \cite{BK05}, we define a power series $c(u)\in Y_n[[u^{-1}]]$ by the rule
\begin{align}\label{power series: HC center drinfeld}
c(u)=\sum\limits_{r\geq 0}c^{(r)}u^{-r}:=d_1(u)d_2(u-1)\cdots d_n(u-n+1).
\end{align}
According to \cite[Theorem 5.1]{BT18}, the {\em Harish-Chandra center} $Z_{\HC}(Y_n)$ is generated by the elements $\{c^{(r)};~r>0\}$ and $\gr_r c^{(r+1)}=z_r$, so that $c^{(1)}, c^{(2)},\dots$ are algebraically independent. 

For each $a=1,\dots,m$,
recall that $D_a(u)$ is a $\mu_a\times\mu_a$ matrix.
Following \cite{MNO96}, we define the {\em quantum determinant} of the matrix $D_a(u)$ as follows:
\begin{align}\label{power series:qdet of Da}
\qdet D_a(u):=\sum\limits_{\sigma\in S_{\mu_a}}\sgn(\sigma)D_{a;\sigma(1),1}(u)D_{a;\sigma(2),2}(u-1)\cdots D_{a;\sigma(\mu_a),\mu_a}(u-\mu_a+1),
\end{align}
where $S_{\mu_a}$ is the symmetric group on $\mu_a$ letters.

\begin{Proposition}\label{prop-HC center gb parabolic generators}
\begin{align}\label{prop:HC center gb parabolic generators}
 c(u)=\qdet D_1(u-p_1(\mu))\qdet D_2(u-p_2(\mu))\cdots\qdet D_m(u-p_m(\mu)).
\end{align}
In particular, the elements $c^{(r)}$ lie in the center of $Y_n(\sigma)$ and they are algebraically independent.
\end{Proposition}
\begin{proof}
Actually, in term of Drinfeld generators, we have
\[
\qdet D_a(u-p_a(u))=\prod\limits_{k=p_a(\mu)+1}^{p_{a+1}(\mu)}d_k(u-k+1).
\]
This has been proven over $\CC$ in \cite[Proposition 3.2]{CH23-1}, 
the same proof works here. 
Moreover, \eqref{prop:HC center gb parabolic generators} implies that the elements $c^{(r)}$ are also central elements in $Y_n(\sigma)$.
\end{proof}
Proposition \ref{prop-HC center gb parabolic generators} implies $Z_{\HC}(Y_n)\subseteq Z(Y_n(\sigma))$ and so we may also denote it $Z_{\HC}(Y_n(\sigma))$ and call it the {\em Harish-Chandra center} of $Y_n(\sigma)$.

\begin{Remark}
When $\mu=(1,\dots,1)$, the product in \eqref{prop:HC center gb parabolic generators} coincides with the definition of $c(u)$. 
When $\mu=(n)$, the product is just the usual quantum determinant of $T(u)$,
which is also equal to $c(u)$ (see \cite[Theorem 8.6]{BK05}).
\end{Remark}
\subsection{Off-diagonal $p$-central elements}
This subsection is a generalization of \cite[Section 5.2]{BT18}.
We investigate the $p$-central elements that lie in the root subalgebras
$Y^+_{a,b;i,j}, Y^-_{b,a;k,l}\subseteq Y_n$ for $1\leq a<b\leq m$ and all admissible $i,j,k,l$,
that is generated by $\{E_{a,b;i,j}^{(r)};~r>0\}$ and $\{F_{b,a;k,l}^{(r)};~r>0\}$,
respectively.

\begin{Lemma}\label{Lemma:EupFDE=0}
For $1\leq a<b\leq m$ and all admissible $i,j$,
all coefficients in the power series $(E_{a,b;i,j}(u))^p$ and $(F_{b,a;i,j}(u))^p$ belong to $Z(Y_n)$.
\end{Lemma}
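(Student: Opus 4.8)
The plan is to reduce to the case of an adjacent diagonal block and then prove centrality by a divided-power computation with generating series. First I would record two reductions that exploit the (anti)automorphisms of Subsection~\ref{section: auto}. Since the transposition anti-automorphism $\tau$ preserves $Z(Y_n)$ and satisfies $\tau(E_{a,b;i,j}(u))=F_{b,a;j,i}(u)$, and an anti-automorphism sends a $p$-th power $X^p$ to $\tau(X)^p$, the statement for $(F_{b,a;i,j}(u))^p$ follows from that for $(E_{a,b;i,j}(u))^p$. Next, by Lemma~\ref{lemma:DEFab-DEFa}(2) a suitable permutation automorphism carries $E_{a,b;i,j}(u)$ to $E_{a;i,1}(u)=E_{a,a+1;i,1}(u)$, and automorphisms preserve the center; hence it suffices to show that all coefficients of $(E_{a;i,j}(u))^p$ lie in $Z(Y_n)$ for every $1\le a<m$ and all admissible $i,j$.

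Write $X:=E_{a;i,j}(u)$. The key elementary point is that in characteristic $p$ one has $\ad(X^p)=(\ad X)^p$ as operators on $Y_n[[u^{-1},v^{-1}]]$: since $\ad X=L_X-R_X$ with the left and right multiplications $L_X,R_X$ commuting, the binomial theorem gives $(L_X-R_X)^p=L_X^p-R_X^p=L_{X^p}-R_{X^p}$. Thus the coefficients of $X^p$ are central as soon as $(\ad X)^p(Y)=0$ for $Y$ ranging over the generating series $D_{b;k,l}(v)$, $E_{b;k,l}(v)$, $F_{b;k,l}(v)$ of the parabolic generators. By \eqref{pr4}, \eqref{pr5} and \eqref{pr11} the bracket $[X,Y]$ already vanishes unless $Y\in\{D_{a;k,l}(v),D_{a+1;k,l}(v),E_{a;k,l}(v),E_{a\pm1;k,l}(v),F_{a;k,l}(v)\}$. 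For $Y=E_{a\pm1;k,l}(v)$ the quantum Serre relation \eqref{cubic serre 1ge relation} (available because $p>2$) gives $(\ad X)^2(Y)=0$, and for $Y=D_{a;k,l}(v)$ the relation \eqref{ed1 relation} combined with the auxiliary vanishing \eqref{ede=0} from the proof of Lemma~\ref{Lemma:EDE relations1} gives $(\ad X)^2(Y)=0$; since $p>2$ both close.

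For $Y=E_{a;k,l}(v)$ and $Y=D_{a+1;k,l}(v)$ I would use the eigen-chains of Lemma~\ref{Lemma:EDE relations1}. Starting from \eqref{ee relation} (resp.\ \eqref{ed2 relation}) and iterating \eqref{eee relation} (resp.\ \eqref{ed2e relation}), an easy induction yields $(u-v)^n(\ad X)^n(Y)=n!\,C_{n-1}$, where $C_\ell$ denotes the relevant $v$-anchored product appearing in those relations. At $n=p$ the scalar $p!$ is zero in $\kk$, so $(u-v)^p(\ad X)^p(Y)=0$; since $u-v$ is a non-zero-divisor in $Y_n[[u^{-1},v^{-1}]]$ (by comparison of top $v$-coefficients), this forces $(\ad X)^p(Y)=0$.

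The case $Y=F_{a;k,l}(v)$ is the main obstacle, and the only one producing a stray term supported in $u$ alone. Put $P(w):=D'_{a;i,l}(w)D_{a+1;k,j}(w)$; since $D$'s from different blocks commute, $P(v)=H_0$, the $\ell=0$ term of the chain \eqref{ed2ed' relation}, and \eqref{ef relation} reads $(u-v)(\ad X)(F_{a;k,l}(v))=P(u)-P(v)$. Iterating $(u-v)\ad X$, using \eqref{ed2ed' relation} on the $P(v)$-part and letting the $P(u)$-part build up $\Pi_n(u):=(\ad X)^n(P(u))$, I expect the induction
\[
(u-v)^n(\ad X)^n(F_{a;k,l}(v))=(u-v)^{n-1}\Pi_{n-1}(u)-n!\,H_{n-1}.
\]
At $n=p$ the term $p!\,H_{p-1}$ drops out, leaving $(u-v)(\ad X)^p(F_{a;k,l}(v))=\Pi_{p-1}(u)$ after cancelling $(u-v)^{p-1}$. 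The right-hand side is a series in $u^{-1}$ only, whereas the left-hand side has $v$-adic valuation at least one; matching $v$-coefficients then forces both sides to be zero, so $(\ad X)^p(F_{a;k,l}(v))=0$. Reconciling this lone $P(u)$-term against the $v$-anchored chain is precisely the extra bookkeeping absent from the Drinfeld presentation, and I expect it to be the technical heart of the argument.
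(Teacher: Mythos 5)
Your proposal is correct and follows the paper's proof essentially step for step: the same reductions via the anti-automorphism $\tau$ and the permutation automorphisms of Lemma~\ref{lemma:DEFab-DEFa}, the same quadratic vanishings from \eqref{cubic serre 1ge relation} and \eqref{ede=0}, and the same $p!$-chains built by iterating \eqref{eee relation}, \eqref{ed2e relation} and \eqref{ed2ed' relation}. The only deviation is the endgame for $F_{a;k,l}(v)$: the paper first deduces $(\ad E_{a;i,j}(u))^{p-1}\bigl(D_{a+1;k,j}(v)D'_{a;i,l}(v)\bigr)=0$ and then specializes $v=u$ to kill your stray term $\Pi_{p-1}(u)$ outright before dividing by $(u-v)^p$, which is slightly cleaner than your coefficient-matching argument --- and note your phrase ``the left-hand side has $v$-adic valuation at least one'' is not quite accurate, since $(u-v)(\ad X)^p(F_{a;k,l}(v))$ does carry a $v^0$-term, although comparing coefficients of all powers of $v$ (using that the $u$-adic valuations are bounded below by $p$) does force everything to vanish as you claim.
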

\begin{proof}
Using Lemma \ref{lemma:DEFab-DEFa} and the anti-automorphism $\tau$, 
the proof reduces to checking that the coefficients of $(E_{a;i,j}(u))^p$ are central in $Y_n$ for each $a=1,\dots,m-1$ and admissible $i,j$.
Since we are in characteristic $p$,
Theorem \ref{Theorem: yn gb parabolic generators} implies that it is enough to show the following identities in $Y_n[[u^{-1},v^{-1}]]$ for all admissible $b,k,l$:
\begin{align}
(\ad E_{a;i,j}(u))^p (E_{b;k,l}(v)) &=0,\label{p power coff 1}\\
(\ad E_{a;i,j}(u))^p (D_{b;k,l}(v)) &=0,\label{p power coff 2}\\
(\ad E_{a;i,j}(u))^p (F_{b;k,l}(v)) &=0.\label{p power coff 3}
\end{align}

We first check \eqref{p power coff 1}.
We use \eqref{ee relation} and \eqref{eee relation} repeatedly:
\begin{align*}
&(u-v)^p(\ad E_{a;i,j}(u))^{p-1}([E_{a;i,j}(u), E_{a;k,l}(v)])\\
=&(u-v)^{p-1}(\ad E_{a;i,j}(u))^{p-1}\big((E_{a;i,l}(v)-E_{a;i,l}(u))(E_{a;k,j}(v)-E_{a;k,j}(u))\big)\\
=&(u-v)^{p-2}(\ad E_{a;i,j}(u))^{p-2}\big(2(E_{a;i,l}(v)-E_{a;i,l}(u))(E_{a;i,j}(v)-E_{a;i,j}(u))(E_{a;k,j}(v)-E_{a;k,j}(u))\big)\\
=&\cdots=p!\big((E_{a;i,l}(v)-E_{a;i,l}(u))(E_{a;i,j}(v)-E_{a;i,j}(u))^{p-1}(E_{a;k,j}(v)-E_{a;k,j}(u))\big)=0.
\end{align*}
Dividing by $(u-v)^p$ gives the desired identity.
To see that $(\ad E_{a;i,j}(u)))^p((E_{b;k,l})(v))=0$ when $|a-b|=1$, 
we actually already have that $(\ad E_{a;i,j}(u)))^2((E_{b;k,l})(v))=0$ by \eqref{cubic serre 1ge relation}.
When $|a-b|>1$, the identity is clear because $[E_{a;i,j}(u), E_{b;k,l}(v)]=0$ by \eqref{pr11} (see also \cite[Lemma 6.4(i)]{BK05}).

For \eqref{p power coff 2},
it is immediate from \eqref{pr5} if $b\neq a$ or $b\neq a+1$.
For the case $b=a$,
we have by \eqref{ed1 relation} and \eqref{ede=0} that
\begin{align*}
&(u-v)^2(\ad E_{a;i,j}(u))^{2}(D_{a;k,l}(v))\\
=&(u-v)(\ad E_{a;i,j}(u))(\sum\limits_{\alpha}D_{a;k,\alpha}(v)(E_{a;\alpha,j}(u)-E_{a;\alpha,j}(v))\delta_{i,l})=0.
\end{align*}
Hence, on dividing by $(u-v)^2$, we get $(\ad E_{a;i,j}(u))^p (D_{a;k,l}(v))=0$.
Finally, when $b=a+1$, repeated application of \eqref{ed2 relation} and \eqref{ed2e relation} yields
\begin{align*}
&(u-v)^p(\ad E_{a;i,j}(u))^{p}(D_{a+1;k,l}(v))\\
=&(u-v)^{p-1}(\ad E_{a;i,j}(u))^{p-1}\big(D_{a+1;k,j}(v)(E_{a;i,l}(v)-E_{a;i,l}(v))\big)\\
=&(u-v)^{p-2}(\ad E_{a;i,j}(u))^{p-2}\big(2 D_{a+1;k,j}(v)(E_{a;i,j}(v)-E_{a;i,j}(u))(E_{a;i,l}(v)-E_{a;i,l}(u))\big)\\
=&\cdots=p!D_{a+1;k,j}(v)(E_{a;i,j}(v)-E_{a;i,j}(u))^{p-1}(E_{a;i,l}(v)-E_{a;i,l}(u))=0.
\end{align*}
Dividing by $(u-v)^p$ completes the proof of \eqref{p power coff 2}.

Finally, for \eqref{p power coff 3}, when $b\neq a$, it follows from \eqref{pr4}.
When $a=b$, we have by \eqref{ed2ed' relation} that
\begin{align*}
&(u-v)^{p-1}(\ad E_{a;i,j}(u))^{p-1}(D_{a+1;k,j}(v)D'_{a;i,l}(v))\\
=&(u-v)^{p-2}(\ad E_{a;i,j}(u))^{p-2}\big(2 D_{a+1;k,j}(v)(E_{a;i,j}(v)-E_{a;i,j}(u))D'_{a;i,l}(v)\big)\\
=&\cdots=p!D_{a+1;k,j}(v)(E_{a;i,j}(v)-E_{a;i,j}(u))D'_{a;i,l}(v)=0.
\end{align*}
As a result, 
\[
(\ad E_{a;i,j}(u))^{p-1}(D_{a+1;k,j}(v)D'_{a;i,l}(v))=0.
\]
We also have 
\[
(\ad E_{a;i,j}(u))^{p-1}(D_{a+1;k,j}(u)D'_{a;i,l}(u))=0
\] 
by setting $v=u$. Then using \eqref{ef relation} and \eqref{pr3},
we conclude that
\begin{align*}
&(u-v)^p(\ad E_{a;i,j}(u))^p (F_{b;k,l}(v))\\
=&(u-v)^{p-1}(\ad E_{a;i,j}(u))^{p-1}(D_{a+1;k,j}(u)D'_{a;i,l}(u)-D_{a+1;k,j}(v)D'_{a;i,l}(v))=0.
\end{align*}
\end{proof}

\begin{Lemma}\label{Lemma:EabijrFbaijrFDE=0}
For $1\leq a<b\leq m$, $r>0$ and all admissible $i,j,k,l$,
we have that $(E_{a,b;i,j}^{(r)})^p$, $(F_{b,a;k,l}^{(r)})^p\in Z(Y_n)$.
\end{Lemma}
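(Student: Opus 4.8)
The plan is to run the argument of Lemma \ref{Lemma:EupFDE=0} at the level of a single coefficient $E_{a;i,j}^{(r)}$ rather than the full power series $E_{a;i,j}(u)$, using the discrete identities of Lemma \ref{Lemma:ErDrEr relations2} in place of the power-series identities of Lemma \ref{Lemma:EDE relations1}. First I would reduce to the adjacent case: by Lemma \ref{lemma:DEFab-DEFa}(2) the permutation automorphism sends $E_{a,b;i,j}^{(r)}$ to $E_{a;i,1}^{(r)}$, and $\tau$ is an anti-automorphism with $\tau(E_{a;i,j}^{(r)})=F_{a;j,i}^{(r)}$; since automorphisms and anti-automorphisms preserve the center, it suffices to prove that $(E_{a;i,j}^{(r)})^p$ is central for all $1\le a\le m-1$ and admissible $i,j$, the statements for the $F$'s and for general $b$ then following by applying $\tau$ and a permutation.

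Next I would use that we are in characteristic $p$: for any $x\in Y_n$ one has $[(E_{a;i,j}^{(r)})^p,x]=(\ad E_{a;i,j}^{(r)})^p(x)$, because $\ad(y^p)=(\ad y)^p$ holds in any associative $\FF_p$-algebra (left and right multiplication commute, and $(L_y-R_y)^p=L_y^p-R_y^p=L_{y^p}-R_{y^p}$). By Theorem \ref{Theorem: yn gb parabolic generators} it is then enough to check $(\ad E_{a;i,j}^{(r)})^p(x)=0$ as $x$ ranges over the parabolic generators $D_{b;k,l}^{(s)}$, $E_{b;k,l}^{(s)}$, $F_{b;k,l}^{(s)}$.

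The verification splits into cases, each driven by a relation of Lemma \ref{Lemma:ErDrEr relations2}. For $x=E_{a;k,l}^{(s)}$ the opening bracket \eqref{pr7} collapses, after cancelling the overlapping summation ranges, to the single family $\sum E_{a;i,l}^{(s_1)}E_{a;k,j}^{(s_2)}$ with all superscripts $\ge r$ when $r\le s$ (and vanishing when $r=s$), or to the opposite family with all superscripts $\le r-1$ when $r>s$; these are exactly the $\ell=0$ seeds of \eqref{1111 relation} and \eqref{2222 relation}. Each family is stable under $\ad E_{a;i,j}^{(r)}$, and iterating multiplies the seed by the successive factors $2,3,\dots$, so after $p$ applications the coefficient is $\pm p!\equiv 0$. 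For $|a-b|=1$ the bracket already squares to zero by \eqref{cubic serre 1ge relation-coeff}, and for $|a-b|>1$ it vanishes by \eqref{pr11}. For $x=D_{b;k,l}^{(s)}$ the cases $b\neq a,a+1$ are immediate from \eqref{pr5}; for $b=a+1$ the bracket \eqref{ed2 relation} is the $\ell=0$ seed of \eqref{4444 raltion} and the same factorial mechanism gives $p!=0$; for $b=a$ the bracket \eqref{ed1 relation} is the $\ell=0$ seed of \eqref{3333 raltion}, whose coefficient $\ell$ vanishes at $\ell=0$, so $(\ad E_{a;i,j}^{(r)})^2$ already annihilates $D_{a;k,l}^{(s)}$. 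Finally, for $x=F_{b;k,l}^{(s)}$ the case $b\neq a$ is immediate from \eqref{pr4}, while for $b=a$ the bracket \eqref{ef relation} is the $\ell=0$ seed of \eqref{5555 raltion} and again the factor $p!$ kills it.

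The substantive work has already been absorbed into Lemma \ref{Lemma:ErDrEr relations2}, so the main obstacle here is purely the bookkeeping that makes each iteration start correctly: one must match every initial commutator $[E_{a;i,j}^{(r)},x]$ on the nose with the $\ell=0$ instance of the correct relation, tracking the discrete index ranges ($\ge r$, $\le r-1$, or $\ge 0$) that their power-series counterparts hide. The delicate point is the case $x=E_{a;k,l}^{(s)}$, where the dichotomy $r\le s$ versus $r>s$ determines which of \eqref{1111 relation} or \eqref{2222 relation} governs the whole iteration; once the seed-matching is pinned down, the emergence of the factor $p!$, and hence the vanishing in characteristic $p$, is automatic in every case.
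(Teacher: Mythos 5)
Your proposal is correct and takes essentially the same route as the paper: reduce via the permutation automorphisms of Lemma \ref{lemma:DEFab-DEFa} and the anti-automorphism $\tau$ to the centrality of $(E_{a;i,j}^{(r)})^p$, use $\ad(y^p)=(\ad y)^p$ in characteristic $p$ together with Theorem \ref{Theorem: yn gb parabolic generators}, and annihilate each parabolic generator by matching the first commutator with the $\ell=0$ seed of the appropriate identity in Lemma \ref{Lemma:ErDrEr relations2} and iterating to produce the factor $p!\equiv 0$. The paper's proof writes out only the $D$-case (via \eqref{3333 raltion} and \eqref{4444 raltion}) and declares the rest ``very similar''; your treatment of the remaining cases, including the $r\le s$ versus $r>s$ dichotomy for \eqref{1111 relation}/\eqref{2222 relation} and the appeals to \eqref{cubic serre 1ge relation-coeff}, \eqref{pr11}, \eqref{pr4} and \eqref{5555 raltion}, is precisely the intended completion.
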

\begin{proof}
Similar to the proof of Lemma  \ref{Lemma:EupFDE=0}, 
it is enough to show that $(E_{a;i,j}^{(r)})^p\in Z(Y_n)$ for each $1\leq a\leq m-1$ and admissible $i,j,r$.
This reduces to checking 
\begin{align}
(\ad E_{a;i,j}^{(r)})^p (E_{b;k,l}^{(s)}) &=0,\label{coff p power coff 1}\\
(\ad E_{a;i,j}^{(r)})^p (D_{b;k,l}^{(s)}) &=0,\label{coff p power coff 2}\\
(\ad E_{a;i,j}^{(r)})^p (F_{b;k,l}^{(s)}) &=0.\label{coff p power coff 3}
\end{align} 
These may all be proved in a very similar way to \eqref{p power coff 1}-\eqref{p power coff 3}.
For example, we check \eqref{coff p power coff 2}.
Due to \eqref{pr5}, we may assume that $a\leq b\leq a+1$.
If $b=a$, then \eqref{pr5} and \eqref{3333 raltion} with $\ell=0$ yield
\begin{align*}
 (\ad E_{a;i,j}^{(r)})^2(D_{a;k,l}^{(s)})=-(\ad E_{a;i,j}^{(r)})(\sum_{\substack{s_1\geq r, t \geq 0 \\ s_1+t =(r-1)+s}}\sum\limits_{\alpha}D_{a;k,\alpha}^{(t)}E_{a;\alpha,j}^{(s_1)}\delta_{i,l})=0.
\end{align*}
When $b=a+1$, a consecutive application \eqref{pr5} and \eqref{4444 raltion} implies
\begin{align*}
(\ad E_{a;i,j}^{(r)})^p(D_{a+1;k,l}^{(s)})=&(\ad E_{a;i,j}^{(r)})^{p-1}\big(\sum_{\substack{s_1\geq r, t \geq 0 \\ s_1+t =(r-1)+s}}D_{a+1;k,j}^{(t)}E_{a;i,l}^{(s_1)}\big)\\
=&2(\ad E_{a;i,j}^{(r)})^{p-2}\big(\sum_{\substack{s_1, t_1\geq r, t \geq 0 \\ s_1+t_1+t =2(r-1)+s}}D_{a+1;k,j}^{(t)}E_{a;i,j}^{(t_1)}E_{a;i,l}^{(s_1)}\big)\\
=&\cdots = p!\sum_{\substack{s_1, t_1,\dots, t_{p-1}\geq r, t \geq 0 \\ s_1+t_1+\cdots+ t_{p-1}+t =p(r-1)+s}}D_{a+1;k,j}^{(t)}E_{a;i,j}^{(t_1)}\cdots E_{a;i,j}^{(t_{p-1})}E_{a;i,l}^{(s_1)}=0.
\end{align*}
This completes the proof of \eqref{coff p power coff 2}.
\end{proof}

Also, we put
\begin{align}\label{def: pabij-qabij}
P_{a,b;i,j}(u)=\sum\limits_{r\geq p}P_{a,b;i,j}^{(r)}u^{-r}:=E_{a,b;i,j}(u)^p,~Q_{b,a;i,j}(u)=\sum\limits_{r\geq p}Q_{b,a;i,j}^{(r)}u^{-r}:=F_{b,a;i,j}(u)^p.
\end{align}
\begin{Theorem}
For $1\leq a<b\leq m$ and all admissible $i,j,k,l$, 
the algebra $Z(Y_n)\cap Y^+_{a,b;i,j}$ and $Z(Y_n)\cap Y^-_{b,a;k,l}$ are infinite rank polynomial algebras freely generated by the central elements
$\{(E_{a,b;i,j}^{(r)})^p;~r>0\}$ and $\{(F_{b,a;k,l}^{(r)})^p;~r>0\}$, respectively.
We have that $(E_{a,b;i,j}^{(r)})^p$, $(F_{b,a;k,l}^{(r)})^p\in {\rm F}_{rp-p} Y_n$ and
\[
\gr_{rp-p}(E_{a,b;i,j}^{(r)})^p=(e_{a,b;i,j}t^{r-1})^p,~\gr_{rp-p}(F_{b,a;k,l}^{(r)})^p=(e_{b,a;k,l}t^{r-1})^p.
\]
For $r \geq p$ we have that
\begin{equation}\label{centre 1-222}
P_{a,b;i,j}^{(r)} = \left\{
\begin{array}{ll}
(E_{a,b;i,j}^{(r/p)})^p+(*)&\text{if $p \mid r$,}\\
(*)&\text{if $p \nmid r$,}
\end{array}
\right.
\end{equation}
where $(*)\in{\rm F}_{r-p-1} Y_n$
is a polynomial in
the elements
$(E_{a,b;i,j}^{(s)})^p$ for $1 \leq s<\lfloor r/p\rfloor$.
Hence, the central elements
$\{P_{a,b;i,j}^{(rp)};~r > 0\}$ give another algebraically
independent set of generators for
$Z(Y_n)\cap Y_{a,b;i,j}^+$
lifting the central elements $\{(e_{a,b;i,j}t^{r-1})^p;~r >
0\}$ of $\gr Y_n$.
Analogous statements with $Y_{a,b;i,j}^+, E, P$ and $e_{a,b;i,j}t^{r-1}$
replaced by $Y_{b,a;k,l}^-, F ,Q$ and $e_{b,a;k,l}t^{r-1}$ also hold.
\end{Theorem}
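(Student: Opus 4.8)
The plan is to reduce to the nearest-neighbour case and then determine the associated graded of $Z(Y_n)\cap Y^+_{a,b;i,j}$ with respect to the loop filtration. First I would apply the permutation automorphism of Lemma~\ref{lemma:DEFab-DEFa}(2), which sends $E_{a,b;i,j}(u)\mapsto E_{a;i,1}(u)$; being an algebra automorphism of $Y_n$ that permutes the RTT generators without changing their degree, it preserves both $Z(Y_n)$ and the loop filtration, so it reduces the $E$-statements to the case $b=a+1$, and the anti-automorphism $\tau$ then transfers these to the $F$-statements. Thus it suffices to study $Y^+:=Y^+_{a,a+1;i,j}$, generated by $\{E_{a;i,j}^{(r)};r>0\}$. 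Writing $et^{k}:=e_{a,b;i,j}t^{k}$, the PBW theorem~\ref{Theorem:PBW parabolic generators} together with \eqref{identification} shows that $\gr Y^+$ is the polynomial ring $\kk[et^{k};k\geq 0]$: the symbols $et^{k}=\gr_{k}E_{a,b;i,j}^{(k+1)}$ pairwise commute in $U(\fg)=\gr Y_n$ by \eqref{Lie bracket current lie 2} (as $a<b$) and are algebraically independent by PBW. Since $U(\fg)$ is a domain, the filtration and symbol assertions for $(E_{a,b;i,j}^{(r)})^p$ are then immediate from $\gr_{r-1}E_{a,b;i,j}^{(r)}=et^{r-1}$.

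The heart of the argument is the identification
\[
\gr\big(Z(Y_n)\cap Y^+\big)=\kk[(et^{k})^p;k\geq 0]\ \subseteq\ \kk[et^{k};k\geq 0].
\]
The inclusion $\supseteq$ is clear: $(E_{a;i,j}^{(k+1)})^p$ lies in $Z(Y_n)$ by Lemma~\ref{Lemma:EabijrFbaijrFDE=0} and has symbol $(et^{k})^p$. For $\subseteq$ I would invoke the standard fact that the leading symbol of a central element is central. Concretely, if $z\in Z(Y_n)\cap Y^+$ has filtered degree $d$, then $zy=yz$ for every $y\in Y_n$ forces $[\gr_{d}z,\gr y]=0$ in $\gr Y_n$; letting $\gr y$ run over the generators of $U(\fg)$ (lifting them via Lemma~\ref{Lemma:chi iso}) shows $\gr_{d}z\in Z(U(\fg))$. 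As also $\gr_{d}z\in\kk[et^{k};k\geq 0]$, it remains to compute $Z(U(\fg))\cap\kk[et^{k};k\geq 0]$, and here Theorem~\ref{theorem:center of ugsigma} with $\sigma=0$ applies: the only free generators of $Z(U(\fg))$ lying in the one-root polynomial subring $\kk[et^{k}]$ are the $p$-central elements $(et^{k})^p$ attached to the root $e_{a,b;i,j}$ with $a<b$, the elements $z_r$ and all other $p$-central generators being linearly independent from $\kk[et^{k}]$ in the PBW basis. Hence $Z(U(\fg))\cap\kk[et^{k};k\geq 0]=\kk[(et^{k})^p;k\geq 0]$, giving $\subseteq$.

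From this identity the remaining claims follow by routine filtered-algebra bookkeeping. The symbols $(et^{k})^p$ are algebraically independent in the domain $U(\fg)$, so the central elements $(E_{a,b;i,j}^{(r)})^p$ are algebraically independent, and they freely generate $Z(Y_n)\cap Y^+$: given $z$ of degree $d$ in this intersection, its symbol is a polynomial in the $(et^{k})^p$, which I lift to a polynomial $\Phi$ in the $(E_{a,b;i,j}^{(r)})^p$ with $\gr\Phi=\gr z$; then $z-\Phi$ is central of strictly smaller degree, and the claim follows by downward induction on the non-negative filtered degree. For formula~\eqref{centre 1-222}, all coefficients of $E_{a,b;i,j}(u)^p$ are central by Lemma~\ref{Lemma:EupFDE=0}, and $P_{a,b;i,j}^{(r)}=\sum_{s_1+\cdots+s_p=r}E_{a,b;i,j}^{(s_1)}\cdots E_{a,b;i,j}^{(s_p)}\in{\rm F}_{r-p}Y_n$. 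The freshman's-dream identity $\big(\sum_{k}(et^{k})x^{k}\big)^p=\sum_{k}(et^{k})^p x^{kp}$ in the commutative ring $\gr Y^+$ computes $\gr_{r-p}P_{a,b;i,j}^{(r)}$ as $(et^{r/p-1})^p$ when $p\mid r$ and as $0$ otherwise; subtracting $(E_{a,b;i,j}^{(r/p)})^p$ (when $p\mid r$) and running the same downward induction identifies the remainder as the polynomial $(*)\in{\rm F}_{r-p-1}Y_n$ in the lower $p$-th powers.

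I expect the main obstacle to be the inclusion $\subseteq$ of the displayed identity, namely the precise determination of $Z(U(\fg))\cap\kk[et^{k};k\geq 0]$: one must argue, using the freeness in Theorem~\ref{theorem:center of ugsigma} and the PBW basis of $U(\fg)$, that no nontrivial combination of the $z_r$ and the remaining $p$-central generators can land in the single-root polynomial subring except through the $(et^{k})^p$. The other delicate point is the filtered-degree bookkeeping in the downward induction, where one must check that the correction term $(*)$ indeed only involves $(E_{a,b;i,j}^{(s)})^p$ with $s<\lfloor r/p\rfloor$; both are manageable once the graded picture above is established.
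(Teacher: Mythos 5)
Your route is essentially the paper's own: the paper proves this theorem by pointing to the argument of \cite[Theorem 5.4]{BT18}, run with Theorem \ref{theorem:center of ugsigma}, Lemma \ref{Lemma:EupFDE=0} and Lemma \ref{Lemma:EabijrFbaijrFDE=0}, and your reconstruction --- centrality of the $p$-th powers from those two lemmas, reduction to the case $b=a+1$ via the permutation automorphism of Lemma \ref{lemma:DEFab-DEFa}(2) together with $\tau$ (the same reduction the paper itself uses in proving those lemmas), identification of $\gr\big(Z(Y_n)\cap Y^+_{a,b;i,j}\big)$ with $\kk[(e_{a,b;i,j}t^{k})^p;\,k\geq 0]$ via Theorem \ref{theorem:center of ugsigma} at $\sigma=0$, and a downward induction on filtered degree --- is exactly that argument. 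Your reduction to neighbouring blocks is also what legitimizes the step $\gr Y^+=\kk[e_{a,b;i,j}t^{k};\,k\geq 0]$, since for $b=a+1$ the relation \eqref{pr7} with $(k,l)=(i,j)$ straightens monomials \emph{inside} $Y^+$ with a strict drop in loop degree, making the PBW/filtration compatibility you assert unproblematic; for general $a<b$ this internal straightening relation is not available in the paper, so the reduction is doing real work.

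There is, however, one point where your bookkeeping does not deliver the literal statement: the strict range $1\leq s<\lfloor r/p\rfloor$ in \eqref{centre 1-222} when $p\nmid r$. Since $(E_{a,b;i,j}^{(s)})^p\in{\rm F}_{sp-p}Y_n$ with \emph{nonvanishing} symbol $(e_{a,b;i,j}t^{s-1})^p$, the constraint $(*)\in{\rm F}_{r-p-1}Y_n$ only forces $sp-p\leq r-p-1$, i.e.\ $s\leq\lfloor (r-1)/p\rfloor$, which for $p\nmid r$ equals $\lfloor r/p\rfloor$, with equality of filtered degrees occurring exactly when $r\equiv 1\pmod p$; the canonical filtration ($\deg t_{i,j}^{(r)}=r$) gives the same non-strict bound. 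So your induction proves the theorem with $s<\lfloor r/p\rfloor$ weakened to $s\leq\lfloor r/p\rfloor$ --- enough for all the structural conclusions (free generation, and the triangular change of generators showing the $P_{a,b;i,j}^{(rp)}$ lift the $(e_{a,b;i,j}t^{r-1})^p$), but not for the stated bound, and you flagged this without resolving it. To close it, observe first that the adjoint weight grading (e.g.\ brackets with the $D_{c;k,k}^{(1)}$ via \eqref{pr5}) forces $(*)$ to be a \emph{linear} combination $\sum_s c_s(E_{a,b;i,j}^{(s)})^p$, so the issue is only whether $c_{\lfloor r/p\rfloor}=0$ when $r\equiv 1\pmod p$; this requires computing the degree-$(r-p-1)$ symbol of $P_{a,b;i,j}^{(r)}$, which your argument never touches. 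The case $r=p+1$ illustrates both the danger and the expected outcome: straightening with \eqref{pr7} gives
\begin{align*}
P_{a,b;i,j}^{(p+1)}=\sum_{m=1}^{p}\big(E^{(1)}\big)^{m-1}E^{(2)}\big(E^{(1)}\big)^{p-m}=\binom{p}{2}\big(E^{(1)}\big)^p=0
\end{align*}
since $p>2$, consistent with the strict bound but established only by this extra computation, not by degree counting.
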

\begin{proof}
The proof, which uses Theorem \ref{theorem:center of ugsigma},
Lemma \ref{Lemma:EupFDE=0} and Lemma \ref{Lemma:EabijrFbaijrFDE=0},
is similar to the proof of \cite[Theorem 5.4]{BT18}.    
\end{proof}

\begin{Corollary}\label{corollary:central EF eleemnts ynsigma}
The elements $\{({^\sigma}E_{a,b;i,j}^{(r)})^p;~r>s_{a,b}^{\mu}\}$ and
$\{({^\sigma}F_{b,a;i,j}^{(r)})^p;~r>s_{b,a}^{\mu}\}$ are central in $Y_n(\sigma)$.
Moreover, they belong to ${\rm F}_{rp-p} Y_n(\sigma)$ and 
\begin{align}\label{gr of sigma EF abijrp}
\gr_{rp-p}({^\sigma}E_{a,b;i,j}^{(r)})^p=(e_{a,b;i,j}t^{r-1})^p,~\ \  \gr_{rp-p}({^\sigma}F_{b,a;i,j}^{(r)})^p=(e_{b,a;i,j}t^{r-1})^p.
\end{align}
\end{Corollary}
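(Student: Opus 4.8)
The plan is to deduce both assertions from facts already available for the unshifted Yangian $Y_n$, transporting centrality across the change-of-shift-matrix isomorphism $\iota$ of Subsection~\ref{section: auto} and reading off the graded data from Lemma~\ref{lemma:grYnsigma=ugsigma}. Concretely, centrality will be reduced to the Theorem immediately preceding this Corollary, which asserts that $(E_{a,b;i,j}^{(r)})^p\in Z(Y_n)$, while the filtration and graded statements will follow from the domain property of $\gr Y_n$.

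For centrality, fix the given $\sigma$ and let $\dot\sigma$ be the shift matrix with the same diagonal sums $s_{i,i+1}+s_{i+1,i}=\dot{s}_{i,i+1}+\dot{s}_{i+1,i}$ but with all upper shifts vanishing, i.e.\ $\dot{s}_{a,a+1}^{\mu}=0$ for every $a$; by additivity this gives $\dot{s}_{a,b}^{\mu}=0$ for all $a<b$, and $\mu$ remains admissible, so the isomorphism $\iota\colon Y_n(\sigma)\to Y_n(\dot\sigma)$ is available. First I would upgrade the intertwining rule for $\iota$ from the adjacent Drinfeld generators $E_{a;i,j}$ to all higher root elements, proving by induction on $b-a$ from the inductive definition \eqref{sigma Eabij} and the additivity $s_{a,b}^{\mu}=s_{a,b-1}^{\mu}+s_{b-1,b}^{\mu}$ coming from \eqref{shift matrix condition} that $\iota({^\sigma}E_{a,b;i,j}^{(r)})={^{\dot\sigma}}E_{a,b;i,j}^{(r-s_{a,b}^{\mu}+\dot{s}_{a,b}^{\mu})}$. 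Since $\dot{s}_{a,b}^{\mu}=0$, the definition \eqref{sigma Eabij} for $\dot\sigma$ collapses to \eqref{EFabij r induction formula}, so the target is the genuine root element $E_{a,b;i,j}^{(r-s_{a,b}^{\mu})}\in Y_n(\dot\sigma)\subseteq Y_n$, with $r-s_{a,b}^{\mu}\geq 1$. Applying the algebra homomorphism $\iota$ then gives $\iota(({^\sigma}E_{a,b;i,j}^{(r)})^p)=(E_{a,b;i,j}^{(r-s_{a,b}^{\mu})})^p$, which lies in $Z(Y_n)$ by the preceding Theorem, hence in $Z(Y_n(\dot\sigma))$; pulling back through the isomorphism $\iota$ puts $({^\sigma}E_{a,b;i,j}^{(r)})^p$ in $Z(Y_n(\sigma))$. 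The statement for the $F$'s is entirely analogous, obtained either by the upper-triangular choice $\dot{s}_{b,a}^{\mu}=0$ or by applying the transposition anti-automorphism $\tau$, which restricts to an anti-isomorphism $Y_n(\sigma)\to Y_n(\sigma^{T})$ sending ${^\sigma}F_{b,a;j,i}$ to ${^{\sigma^{T}}}E_{a,b;i,j}$ and preserves centers.

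The filtration and graded claims \eqref{gr of sigma EF abijrp} require no further work once Lemma~\ref{lemma:grYnsigma=ugsigma} is in hand. That lemma gives ${^\sigma}E_{a,b;i,j}^{(r)}\in{\rm F}_{r-1}Y_n(\sigma)$ with $\gr_{r-1}{^\sigma}E_{a,b;i,j}^{(r)}=e_{a,b;i,j}t^{r-1}$, so the $p$-th power lies in ${\rm F}_{p(r-1)}Y_n(\sigma)={\rm F}_{rp-p}Y_n(\sigma)$. Because $\gr Y_n(\sigma)$ is identified with $U(\fg_\sigma)$, which is a domain, and $e_{a,b;i,j}t^{r-1}\neq 0$, the leading symbol $(e_{a,b;i,j}t^{r-1})^p$ is nonzero, so it is exactly $\gr_{rp-p}({^\sigma}E_{a,b;i,j}^{(r)})^p$; the computation for $F$ is identical.

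The hard part will be the inductive verification that $\iota$ intertwines the higher root elements with the degree shift $r\mapsto r-s_{a,b}^{\mu}$: the change-of-shift-matrix isomorphism is prescribed only on the adjacent generators, so its effect on ${^\sigma}E_{a,b;i,j}^{(r)}$ must be propagated through the nested commutators \eqref{sigma Eabij}, and the degree bookkeeping must be tracked carefully so that the partial shifts telescope correctly via $s_{a,b}^{\mu}=s_{a,b-1}^{\mu}+s_{b-1,b}^{\mu}$. Once this intertwining is established, everything else is a formal transport of centrality across an isomorphism together with the already-proven graded identification.
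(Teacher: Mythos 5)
Your proposal is correct and follows essentially the same route as the paper: reduce via the change-of-shift-matrix isomorphism $\iota$ to a lower triangular $\dot\sigma$, where ${^{\dot\sigma}}E_{a,b;i,j}^{(r-s_{a,b}^{\mu})}=E_{a,b;i,j}^{(r-s_{a,b}^{\mu})}$ has central $p$-th power in all of $Y_n$ by Lemma \ref{Lemma:EabijrFbaijrFDE=0}, with the graded claim read off from Lemma \ref{lemma:grYnsigma=ugsigma}. The intertwining induction on $b-a$ that you flag as the hard part is exactly what the paper carries out in Lemma \ref{lemma:iota send center = center}, so your write-up just makes explicit a step the paper's proof of the Corollary uses without comment.
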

\begin{proof}
The assertion \eqref{gr of sigma EF abijrp} is immediate from \eqref{gr sigama EF abij},
so we just need to establish the centrality.
In case $\sigma$ is lower triangular, 
we have that ${^\sigma}E_{a,b;i,j}^{(r)}=E_{a,b;i,j}^{(r)}$ (see \eqref{EFabij r induction formula} and \eqref{sigma Eabij}).
Lemma \ref{Lemma:EabijrFbaijrFDE=0} implies that $({^\sigma}E_{a,b;i,j}^{(r)})^p$ is central in $Y_n$, so it is certainly central in the subalgebra $Y_n(\sigma)$.
For general case, we pick a lower triangular shift matrix $\dot{\sigma}$ such that 
$s_{i,i+1}+s_{i+1,i}=\dot{s}_{i,i+1}+\dot{s}_{i+1,i}$ for all $i=1,\dots,n-1$.
Using the change of shift matrix isomorphism $\iota$ from Section \ref{section: auto},
we obtain
\[
\iota(({^{\sigma}}E_{a,b;i,j}^{(r)})^p)=({^{\dot{\sigma}}}E_{a,b;i,j}^{(r-s_{a,b}^{\mu})})^p=(E_{a,b;i,j}^{(r-s_{a,b}^{\mu})})^p\in Z(Y_n(\dot{\sigma})).
\]
Consequently, $({^\sigma}E_{a,b;i,j}^{(r)})^p\in Z(Y_n(\sigma))$.
The centrality of $({^\sigma}F_{b,a;i,j}^{(r)})^p$ is proved similarly.
\end{proof}

\subsection{Diagonal $p$-central elements}
In this subsection we introduce the $p$-central elements generated by the diagonal parabolic generators $\{D_{a;i,j}^{(r)}\}$. 
For $a=1,\dots,m$ and admissible $i,j$, we define
\begin{align}\label{formula:baij}
B_{a;i,j}(u)=\sum\limits_{r\geq 0}B_{a;i,j}^{(r)}u^{-r}:=D_{a;i,j}(u)D_{a;i,j}(u-1)\cdots D_{a;i,j}(u-p+1).
\end{align}

\begin{Lemma}\label{Lemma:Baij in center}
For all $a=1,\dots,m$, $r>0$ and admissible $i,j$,
the element $B_{a;i,j}^{(r)}$ belongs to $Z(Y_n)$. 
\end{Lemma}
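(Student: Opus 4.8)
The plan is to show that every coefficient $B_{a;i,j}^{(r)}$ is central by checking that the power series $B_{a;i,j}(u)$ commutes with each family of parabolic generators; by Theorem \ref{Theorem: yn gb parabolic generators} this suffices. The key observation is that $B_{a;i,j}(u)$ is exactly the length-$p$ product $D_{a;i,j\downarrow p}(u)$ of \eqref{definition:down formula}, and since its factors commute by \eqref{Daijrs comm} it also equals $D_{a;i,j\uparrow p}(u-p+1)$. Consequently the commutators of $B_{a;i,j}(u)$ with the neighbouring $E$-generators are governed by Lemma \ref{Lemma: Daij-up-down} taken at $\ell=p$: both \eqref{down formula} applied to $E_{a;k,l}$ and \eqref{up formula} applied to $E_{a-1;k,l}$ (after the shift $u\mapsto u-p+1$) carry an overall factor $\ell=p$, which is $0$ in $\kk$, so both brackets vanish. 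For every remaining $E_{b;k,l}$ with $b\notin\{a-1,a\}$ the commutator is already zero by \eqref{pr5}. This settles commutation with all $E$-generators, and applying the transposition anti-automorphism $\tau$ of Subsection \ref{section: auto} — which sends $B_{a;i,j}(u)$ to $B_{a;j,i}(u)$ and interchanges $E$'s and $F$'s (with indices transposed) — transfers these identities to commutation with every $F$-generator.

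It then remains to treat the $D$-generators. For $b\neq a$ the series $B_{a;i,j}(u)$, being built from block-$a$ entries alone, commutes with $D_{b;k,l}(v)$ by \eqref{pr3}. The genuine difficulty — and the point where the parabolic setting departs from the Drinfeld case of \cite{BT18}, in which the diagonal blocks are $1\times1$ and all the $d$'s automatically commute — is the same-block identity $[B_{a;i,j}(u),D_{a;k,l}(v)]=0$. My plan exploits what has just been proved: $B_{a;i,j}(u)$ already commutes with the $E$- and $F$-generators attached to an adjacent block. Assuming $m\geq2$, fix a block $c\in\{a-1,a\}$ that exists; the power-series form \eqref{ef relation} of relation \eqref{pr4} expresses $[E_{c;i',j'}(v),F_{c;k',l'}(w)]$, up to the scalar $(v-w)^{-1}$, as a difference of products in which a block-$a$ factor ($D_a$ when $c=a-1$, or $D'_a$ when $c=a$) stands alongside a factor belonging to a block other than $a$. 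Since $B_{a;i,j}(u)$ commutes with both $E_c$ and $F_c$ it commutes with this bracket, and since it commutes with the non-block-$a$ factor it may be slid past it; the orthogonality relation \eqref{pr2} then cancels that auxiliary factor and lets one pass between $D'$ and $D$, isolating $[B_{a;i,j}(u),D_{a;k,l}(v)]=0$ for all admissible $k,l$.

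I expect this same-block $D$-commutation to be the main obstacle, and the index bookkeeping in the $E$–$F$ bracket together with the cancellation via \eqref{pr2} is the part I would write out most carefully. The cancellation argument is clean but relies on an adjacent block, so the pure RTT case $\mu=(n)$, $m=1$, must be handled separately: there no $E$ or $F$ generators are available, and one must instead establish $[D_{a;i,j\downarrow p}(u),D_{a;k,l}(v)]=0$ directly, for instance by proving an inductive power-series formula for $[D_{a;i,j\downarrow\ell}(u),D_{a;k,l}(v)]$ in the spirit of Lemma \ref{Lemma: Daij-up-down}, whose right-hand side carries an explicit factor $\ell$ and hence vanishes at $\ell=p$, or by reducing through the permutation automorphisms of Lemma \ref{lemma:DEFab-DEFa} to a finite low-rank computation. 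Either route yields the full claim that $B_{a;i,j}^{(r)}\in Z(Y_n)$ for all $a$, $r>0$ and admissible $i,j$.
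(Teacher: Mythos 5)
Your handling of the $E$- and $F$-generators coincides with the paper's own proof: it too writes $B_{a;i,j}(u)=D_{a;i,j\downarrow p}(u)=D_{a;i,j\uparrow p}(u-p+1)$, kills the brackets with $E_{a;k,l}(v)$ and $E_{a-1;k,l}(v)$ through the overall factor $\ell=p$ in \eqref{down formula} and \eqref{up formula}, uses $\tau$ for the $F$'s, and disposes of $b\notin\{a-1,a\}$ by \eqref{pr5}, and of $[B_{a;i,j}(u),D_{b;k,l}(v)]$ for $b\neq a$ by \eqref{pr3}. Where you genuinely diverge is the same-block identity $[B_{a;i,j}(u),D_{a;k,l}(v)]=0$: the paper reduces to $a=1$ by the shift map \eqref{psik D}, notes $D_{1;k,l}(u)=t_{k,l}(u)$, and simply cites \cite[Lemma 6.8]{BT18}. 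Your bootstrap through \eqref{ef relation} is correct and works cleanly whenever $m\geq 2$: commuting $B:=B_{a;i,j}(u)$ past $[E_c(v),F_c(w)]$ for an adjacent block $c$, and sliding it past the non-block-$a$ factor (whose primed entries $B$ commutes with, by \eqref{pr3} together with the recursion \eqref{pr2}), one finds that $D'_{a-1;i',i'}(v)\,[B,D_{a;k',j'}(v)]$ is independent of $v$; since $[B,D_{a;k',j'}(v)]$ has vanishing constant term while $D'_{a-1;i',i'}(v)$ has constant term $1$ and is hence invertible in $Y_n[[v^{-1}]]$, the bracket vanishes, and the $a=1$ variant with $c=a$ yields $[B,D'_{1}(v)]=0$ and then $[B,D_{1}(v)]=0$ via \eqref{pr2}. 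This is a nice internalization: for every $m\geq 2$ it derives the parabolic same-block case from the already-proved $E$/$F$ commutation, with no appeal to the RTT-case result that the paper cites.

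The residual case $m=1$, i.e.\ $\mu=(n)$ where $B_{1;i,j}(u)=t_{i,j}(u)t_{i,j}(u-1)\cdots t_{i,j}(u-p+1)$, is exactly \cite[Lemma 6.8]{BT18}, and your two fallbacks there are not on equal footing. The proposed factor-$\ell$ formula for $[D_{a;i,j\downarrow\ell}(u),D_{a;k,l}(v)]$ is unlikely to exist in the required generality: the induction behind Lemma \ref{Lemma: Daij-up-down} closes because bracketing $E$'s against the $D_{\downarrow\ell}$ string reproduces terms in the same one-parameter family, whereas by \eqref{tiju tklv relation} the bracket $[t_{i,j}(u),t_{k,l}(v)]$ creates the \emph{new} entries $t_{k,j}$, $t_{i,l}$, so iterating does not return a multiple of the original shape; concretely, at $\ell=p$ the vanishing of $[t_{1,2\downarrow p}(u),t_{2,1}(v)]$ encodes the centrality of $d_1(u)\cdots d_1(u-p+1)(e_1(u))^p$, which in both this paper and \cite{BT18} requires the separate $p$-th power analysis of Lemma \ref{Lemma:EupFDE=0} and cannot drop out of a single telescoping identity. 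Your second fallback (permutation reduction to low rank) is viable but amounts to reproving \cite[Lemma 6.8]{BT18}; the economical repair is to cite that lemma for $m=1$, as the paper does, or to adapt the paper's alternative proof, which normalizes $(i,j)$ to $(1,1)$ or $(1,2)$ via Lemma \ref{lemma:DEFab-DEFa}, applies the shift map, and invokes the factorizations $t_{1,1}(u)\cdots t_{1,1}(u-p+1)=d_1(u)\cdots d_1(u-p+1)$ and $t_{1,2}(u)\cdots t_{1,2}(u-p+1)=d_1(u)\cdots d_1(u-p+1)(e_1(u))^p$ together with Lemma \ref{Lemma:EupFDE=0}. With the $m=1$ case patched in this way, your argument is complete and, for $m\geq 2$, more self-contained than the paper's.
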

\begin{proof}
Thanks to Theorem \ref{Theorem: yn gb parabolic generators},
it suffices to check for all admissible $b,k,l$ that
\begin{align}\label{BEF=0}
[B_{a;i,j}(u),E_{b;k,l}(v)]&=0=[B_{a;i,j}(u),F_{b;k,l}(v)],
\end{align}
\begin{align}\label{BD=0}
[B_{a;i,j}(u),D_{b;k,l}(v)]&=0.
\end{align}

To prove \eqref{BEF=0}, we use the anti-automorphism $\tau$ to reduce to showing the first equality.
This is clear when $b\notin\{a-1,a\}$ by \eqref{pr5}.
Consider first the case $b=a-1$. Then \eqref{up formula} implies that
\[
[B_{a;i,j}(u),E_{a-1;k,l}(v)]=[D_{a;i,j\downarrow p}(u),E_{a-1;k,l}(v)]=[D_{a;i,j\uparrow p}(u-p+1),E_{a-1;k,l}(v)]=0.
\]
One argues similarly for the case $b=a$ using \eqref{down formula} instead of \eqref{up formula}.

For \eqref{BD=0}, when $b\neq a$, it follows from \eqref{pr3}.
If $b=a$, then we may assume that $a=1$ by using the shift map \eqref{psik D}.
In view of \eqref{quasiD}, it remains to show that
\[
[t_{i,j}(u)t_{i,j}(u-1)\cdots t_{i,j}(u-p+1), t_{k,l}(v)]=0.
\]
Then this follows immediately from \cite[Lemma 6.8]{BT18}.
\end{proof}

\begin{Remark}\label{Remark:b sij}
In the special case $\mu=(1,\dots,1)$,
we recall that the presentation is just Drinfeld presentation.
We define $b_i(u)$ via
\begin{align}\label{biu}
b_i(u)=\sum\limits_{r\geq 0}b_i^{(r)}u^{-r}:=d_i(u)d_i(u-1)\cdots d_i(u-p+1)    
\end{align}
for $i=1,\dots,n$ in terms of the diagonal Drinfeld generators, 
and it is shown that all elements $b_i^{(r)}$ belong to $Z(Y_n)$ (\cite[Lemma 5.7]{BT18}).
On the other hand, when $\mu=(n)$, we have $m=1$,
so that $D_{a;i,j}(u)=D_{1;i,j}(u)=t_{i,j}(u)$ and 
\begin{align}\label{siju}
B_{a;i,j}(u)=t_{i,j}(u)t_{i,j}(u-1)\cdots t_{i,j}(u-p+1)=:\sum\limits_{r\geq 0}s_{i,j}^{(r)}u^{-r}=s_{i,j}(u).  
\end{align}
Also, all of the coefficients $s_{i,j}^{(r)}$ belong to $Z(Y_n)$ (\cite[Lemma 6.8]{BT18}).
\end{Remark}

Here we give another proof for Lemma \ref{Lemma:Baij in center}
\begin{proof}[Proof of Lemma \ref{Lemma:Baij in center}]
Using Lemma \ref{lemma:DEFab-DEFa}, 
we reduce to proving that all coefficients of $B_{a;1,1}(u)$ and of $B_{a;1,2}(u)$ are central.
Let $\bar{\mu}:=(\mu_a,\mu_{a+1},\dots,\mu_m)$ and we put $k:=p_a(\mu)=\mu_1+\cdots+\mu_{a-1}$.
We have by \eqref{psik D} that
\[
{^\mu}B_{a;1,1}(u)=\psi_{k}({^{\bar{\mu}}}B_{1;1,1}(u)),~{^\mu}B_{a;1,2}(u)=\psi_{k}({^{\bar{\mu}}}B_{1;1,2}(u)).
\]
Hence the definition \eqref{formula:baij} and \eqref{quasiD} in conjunction with \cite[(6.32),(6.33)]{BT18} yield
\begin{align*}
{^{\bar{\mu}}}B_{1;1,1}(u)=&t_{1,1}(u)t_{1,1}(u-1)\cdots t_{1,1}(u-p+1)\\
=&d_1(u)d_1(u-1)\cdots d_{1}(u-p+1),\\
{^{\bar{\mu}}}B_{1;1,2}(u)=&t_{1,2}(u)t_{1,2}(u-1)\cdots t_{1,2}(u-p+1)\\
=&d_1(u)d_1(u-1)\cdots d_1(u-p+1)(e_1(u))^p.
\end{align*}
By applying \eqref{psik d} and \eqref{psik e}, 
we obtain
\[
{^\mu}B_{a;1,1}(u)=\psi_{k}({^{\bar{\mu}}}B_{1;1,1}(u))=d_{k+1}(u)d_{k+1}(u-1)\cdots d_{k+1}(u-p+1),\]
\[
{^\mu}B_{a;1,2}(u)=\psi_{k}({^{\bar{\mu}}}B_{1;1,2}(u))=d_{k+1}(u)d_{k+1}(u-1)\cdots d_{k+1}(u-p+1)(e_{k+1}(u))^p.
\]
Our result now follows from Lemma \ref{Lemma:EupFDE=0} together with Remark \ref{Remark:b sij}.
\end{proof}

Next we introduce the diagonal subalgebras
\begin{align}\label{Yaij0}
Y_{a;i,j}^0:=\kk[D_{a;i,j}^{(r)};~r>0]
\end{align}
of $Y_n$. 
By definition (see subsection \ref{subsection:shifted yangian}), 
we note that $Y_{a;i,j}^0$ is also subalgebra of $Y_n(\sigma)$.

\begin{Theorem}
Assume that $n\geq 2$.
For $1\leq a\leq m$ and all admissible $i,j$,
the algebra $Z(Y_n)\cap Y_{a;i,j}^0$ is an infinite rank polynomial algebra freely generated by the central elements
$\{B_{a;i,j}^{(rp)};~r>0\}$. This statement also describes the algebras $Z(Y_n(\sigma))\cap Y_{a;i,j}^0$.
We have that $B_{a;i,j}^{(rp)}\in {\rm F}_{rp-p} Y_n$ and 
\begin{align}\label{gr baijrp}
\gr_{rp-p} B_{a;i,j}^{(rp)}=(e_{a,a;i,j}t^{r-1})^p-\delta_{i,j}e_{a,a;i,j}t^{rp-p}. 
\end{align}
For $0<r<p$, we have that $B_{a;i,j}^{(r)}=0$.
For $r>p$ and $p\nmid r$,
we have that $B_{a;i,j}^{(r)}\in{\rm F}_{r-p-1} Y_n$ and it may be expressed as a polynomial in the elements $\{B_{a;i,j}^{(sp)};~0<s\leq \lfloor r/p\rfloor\}$.
\end{Theorem}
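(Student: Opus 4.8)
The plan is to deduce the whole statement from one leading-term computation combined with the description of $Z(\fg)$ in Theorem \ref{theorem:center of ugsigma}. First I record the underlying polynomial structure: by \eqref{Daijrs comm} the generators $D_{a;i,j}^{(r)}$ ($r>0$) commute, and by the PBW Theorem \ref{Theorem:PBW parabolic generators} they are algebraically independent, so $Y_{a;i,j}^0=\kk[D_{a;i,j}^{(r)};~r>0]$ is a polynomial algebra. Under the identification \eqref{identification} its loop filtration has associated graded $\gr Y_{a;i,j}^0=\kk[e_{a,a;i,j}t^s;~s\geq 0]\subseteq U(\fg)$, a polynomial subalgebra on the pairwise commuting elements $e_{a,a;i,j}t^s$ (they commute by \eqref{Lie bracket of current Lie}).

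Next I would pin down the graded target. Since the leading symbol of a central element is central, $\gr\big(Z(Y_n)\cap Y_{a;i,j}^0\big)\subseteq Z(\fg)\cap\gr Y_{a;i,j}^0$. Applying Theorem \ref{theorem:center of ugsigma} with $\sigma=0$, the free generators of $Z(\fg)$ are the $z_r$ and the $p$-central elements $(e_{b,c;k,l}t^s)^p-\delta_{b,c}\delta_{k,l}e_{b,c;k,l}t^{sp}$; intersecting with $\kk[e_{a,a;i,j}t^s]$ and using $n\geq 2$ to discard every $z_r$ (each involves diagonal entries outside the single position $(a,a;i,j)$), I obtain that $Z(\fg)\cap\gr Y_{a;i,j}^0$ is freely generated by
\[
\{(e_{a,a;i,j}t^s)^p-\delta_{i,j}e_{a,a;i,j}t^{sp};~s\geq 0\}.
\]
In particular this subalgebra is concentrated in filtration degrees divisible by $p$. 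Running the same computation with Theorem \ref{theorem:center of ugsigma} applied to $\fg_\sigma$ (via Lemma \ref{lemma:grYnsigma=ugsigma}) yields the identical answer, because the diagonal blocks always carry shift $0$; this is what will make the $Y_n(\sigma)$ statement follow for free.

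The crux is the leading-term identity \eqref{gr baijrp}, where I would reuse the reductions already exploited for Lemma \ref{Lemma:Baij in center}. By Lemma \ref{lemma:DEFab-DEFa} a permutation automorphism, which preserves $Z(Y_n)$ and the loop filtration, carries $B_{a;i,j}$ to $B_{a;1,1}$ when $i=j$ and to $B_{a;1,2}$ when $i\neq j$, so it suffices to treat these two cases; the shift-map identities \eqref{psik D}, \eqref{psik d}, \eqref{psik e} then identify ${}^\mu B_{a;1,1}(u)=b_{k+1}(u)$ and ${}^\mu B_{a;1,2}(u)=b_{k+1}(u)(e_{k+1}(u))^p$ with $k=p_a(\mu)$, exactly as in the second proof of Lemma \ref{Lemma:Baij in center}. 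For $i\neq j$ the series $D_{a;i,j}(u)$ has no constant term, and since $\kk$ has characteristic $p$ and the $e_{a,a;i,j}t^s$ commute, the Frobenius identity gives $\gr_{rp-p}B_{a;i,j}^{(rp)}=(e_{a,a;i,j}t^{r-1})^p$, matching \eqref{gr baijrp} as $\delta_{i,j}=0$. For $i=j$ the corresponding statement for the diagonal Drinfeld series $b_{k+1}(u)$ is part of the analysis in \cite{BT18} (equivalently one expands \eqref{formula:baij} directly, where the characteristic-$p$ vanishing of the power sums $\sum_{c=0}^{p-1}c^t$ kills all but the claimed term), and transporting back gives $\gr_{rp-p}B_{a;i,i}^{(rp)}=(e_{a,a;i,i}t^{r-1})^p-e_{a,a;i,i}t^{rp-p}$. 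I expect this $i=j$ leading-term computation, with its delicate characteristic-$p$ cancellations, to be the main obstacle.

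Finally I would assemble the structure statement by the standard graded-lifting argument. The elements $B_{a;i,j}^{(rp)}$ ($r>0$) are central by Lemma \ref{Lemma:Baij in center}, lie in $Y_{a;i,j}^0$, and by \eqref{gr baijrp} their leading symbols are precisely the free generators of $Z(\fg)\cap\gr Y_{a;i,j}^0$ found above; an induction on filtration degree then shows they freely generate $Z(Y_n)\cap Y_{a;i,j}^0$, and the identical graded target gives the same conclusion for $Z(Y_n(\sigma))\cap Y_{a;i,j}^0$. The remaining assertions are read off from the filtration: for $i\neq j$ the absence of a constant term forces $B_{a;i,j}^{(r)}=0$ when $0<r<p$ (for $i=j$ this again follows from the $b_{k+1}$ reduction); and for $r>p$ with $p\nmid r$, the graded target being concentrated in degrees divisible by $p$ forces $\gr_{r-p}B_{a;i,j}^{(r)}=0$, whence $B_{a;i,j}^{(r)}\in{\rm F}_{r-p-1}Y_n$, while membership in $\kk[B_{a;i,j}^{(sp)}]$ together with a degree count pins the occurring generators down to $0<s\leq\lfloor r/p\rfloor$.
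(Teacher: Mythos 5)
Your proposal is correct, and its skeleton is the same as the paper's: both sandwich $\gr\big(Z(Y_n)\cap Y_{a;i,j}^0\big)$ inside $Z(\fg)\cap U(\fg_{a;i,j}^0)$, identify the latter via Theorem \ref{theorem:center of ugsigma} as $\kk[(e_{a,a;i,j}t^s)^p-\delta_{i,j}e_{a,a;i,j}t^{sp};~s\geq 0]$, and then lift through the symbols of the central elements $B_{a;i,j}^{(rp)}$, with the $Y_n(\sigma)$ statement following because the diagonal blocks carry shift zero so the graded target is unchanged. Where you genuinely diverge is the key filtration/symbol step \eqref{gr baijrp}: the paper applies the abstract combinatorial results \cite[Lemmas 2.9 and 2.11]{BT18} directly to the commuting family $D_{a;i,j}^{(r)}$ (commutativity being \eqref{Daijrs comm}), which yields in one stroke the vanishing $B_{a;i,j}^{(r)}=0$ for $0<r<p$, the membership $B_{a;i,j}^{(rp)}\in{\rm F}_{rp-p}Y_n$ with the stated symbol, and the drop ${\rm F}_{r-p-1}$ for $p\nmid r$; you instead reduce by permutation automorphisms and the shift map to the $(1^n)$ Drinfeld case of \cite{BT18} when $i=j$, and for $i\neq j$ give a direct Frobenius/multinomial argument, which is valid because the binomial corrections coming from the shifts $u\mapsto u-c$ land in strictly lower filtration and the ordered-composition multinomial coefficients are divisible by $p$ unless all parts coincide. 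Your alternative derivation of the $p\nmid r$ drop from centrality plus the observation that the graded target is concentrated in degrees divisible by $p$ is also sound, with one caution that you implicitly handle correctly: for $i=j$ even the a priori membership $B_{a;i,j}^{(r)}\in{\rm F}_{r-p}Y_n$ (needed before $\gr_{r-p}B_{a;i,j}^{(r)}$ makes sense) is itself a characteristic-$p$ cancellation, since terms containing $D_{a;i,i}^{(0)}=1$ factors naively give only ${\rm F}_{r-1}$, and outsourcing this to \cite{BT18} is exactly what is required (it is also what the paper does). One cosmetic point, at the level of the paper's own brevity: for the position $(a,i,j)=(1,1,1)$ your phrase ``discard every $z_r$'' is slightly loose, since there the elements $(e_{1,1;1,1}t^s)^p-e_{1,1;1,1}t^{sp}$ are not among the listed free generators of $Z(\fg)$ but are polynomials in them involving the $z_r$'s; the identification of the intersection, and hence your argument, is unaffected. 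Net comparison: the paper's citation of the abstract lemmas is shorter and uniform in $i,j$, while your route is marginally more self-contained off the diagonal and recycles the reductions already used in the second proof of Lemma \ref{Lemma:Baij in center}.
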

\begin{proof}
Let $\fg_{a;i,j}^0$ be the abelian subalgebra of $\fg$ spanned by $\{e_{a,a;i,j}t^r;~r\geq 0\}$.
By Theorem \ref{theorem:center of ugsigma},
one sees that
\begin{align}
 Z(\fg)\cap U(\fg_{a;i,j}^0)=\kk[(e_{a,a;i,j}t^{r})^p-\delta_{i,j}e_{a,a;i,j}t^{rp};~r\geq 0].   
\end{align}
We have that $\gr(Z(Y_n)\cap Y_{a;i,j}^0)\subseteq Z(\fg)\cap U(\fg_{a;i,j}^0)$.
On the other hand, we know that $B_{a;i,j}^{(rp+p)}$ belongs to $Z(Y_n)\cap Y_{a;i,j}^0$ by Lemma \ref{Lemma:Baij in center}.
Moreover, applying \cite[Lemma 2.9]{BT18} if $i=j$ or \cite[Lemma 2.11]{BT18} if $i\neq j$,
we see that $B_{a;i,j}^{(rp+p)}\in {\rm F}_{rp} Y_{a;i,j}^0$ and $\gr_{rp}B_{a;i,j}^{(rp+p)}=(e_{a,a;i,j}t^{r})^p-\delta_{i,j}e_{a,a;i,j}t^{rp}$.
Consequently, $\gr(Z(Y_n)\cap Y_{a;i,j}^0)=Z(\fg)\cap U(\fg_{a;i,j}^0)$ and the elements $\{B_{a;i,j}^{(rp)};~r>0\}$ are algebraically independent generators. 
These lemmas also show that $B_{a;i,j}^{(r)}=0$ for $0<r<p$ and that $B_{a;i,j}^{(r)}\in {\rm F}_{r-p-1}Y_{a;i,j}^0$ when $r\geq p$ with $p\nmid r$. 
Since it is central by Lemma \ref{Lemma:Baij in center} again,
it must be a polynomial in the elements $\{B_{a;i,j}^{(sp)};~0<s\leq \lfloor r/p\rfloor\}$.
The same arguments works for $Y_n(\sigma)$.
\end{proof}

\subsection{The center of $Z(Y_n(\sigma))$}
We will use the notation ${^\mu}Y_n(\sigma)$ to emphasize the choice of the admissible $\mu$.
We have already defined the Harish-Chandra center $Z_{\HC}({^\mu}Y_n(\sigma))$ in Section \ref{section name HC center}.
Also define the {\em $p$-center} $Z_p({^\mu}Y_n(\sigma))$ of ${^\mu}Y_n(\sigma)$ to be the subalgebra generated by
\begin{align*}
I_p&:=\{B_{a;i,j}^{(rp)};~1\leq a\leq m, 1\leq i,j\leq \mu_a, r>0\},\\
J_p&:=\{({^\sigma}E_{a,b;i,j}^{(r)})^p;~1\leq a<b\leq m, 1\leq i\leq \mu_a, 1\leq j\leq \mu_b, r>s_{a,b}^{\mu}\},\\
K_p&:=\{({^\sigma}F_{b,a;k,l}^{(r)})^p;~1\leq a<b\leq m, 1\leq l\leq \mu_a, 1\leq k\leq \mu_b, r>s_{b,a}^{\mu}\}.
\end{align*}
We have shown that $Z_p({^\mu}Y_n(\sigma))$ is a subalgebra of $Z({^\mu}Y_n(\sigma))$,
see Corollary \ref{corollary:central EF eleemnts ynsigma}
and Lemma \ref{Lemma:Baij in center}.
Note also by \eqref{gr of sigma EF abijrp} and \eqref{gr baijrp} that $\gr Z_p({^\mu}Y_n(\sigma))$
may be identified with the $p$-center $Z_p(\fg_\sigma)$ of $U(\fg_\sigma)$ from \eqref{generator of p-centerof ug sigma}.

We also need one more family of elements.
We let
\begin{align}\label{bcu}
bc(u):=\sum\limits_{r\geq 0}bc^{(r)}u^{-r}:=c(u)c(u-1)\cdots c(u-p+1).
\end{align}

\begin{Lemma}\label{Lemma:bcr in Zpyn sigma}
For any $r>0$, we have that $bc^{(r)}\in Z_p({^\mu}Y_n(\sigma))$.   
\end{Lemma}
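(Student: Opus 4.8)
The plan is to factor the series $bc(u)$ into a product of the diagonal $p$-central series $b_k$ of \eqref{biu}, whose coefficients are already known to be $p$-central. The starting point is the defining factorization $c(u)=d_1(u)d_2(u-1)\cdots d_n(u-n+1)$ from \eqref{power series: HC center drinfeld}, together with the fact that the Drinfeld series $d_1(u),\dots,d_n(u)$ pairwise commute: the coefficients of a single $d_k$ commute with one another by \eqref{Daijrs comm}, while for $k\neq l$ the bracket $[d_k^{(r)},d_l^{(s)}]$ vanishes because the right-hand side of \eqref{pr3} carries a factor $\delta_{a,b}$ and in the shape $(1,\dots,1)$ the indices $k,l$ sit in distinct blocks. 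Hence the subalgebra of $Y_n$ generated by all $d_k^{(r)}$ is commutative, and I may freely reorder the factors of
\[
bc(u)=\prod_{j=0}^{p-1}c(u-j)=\prod_{k=1}^{n}\;\prod_{j=0}^{p-1}d_k(u-k+1-j)=\prod_{k=1}^{n}b_k(u-k+1),
\]
where the last equality is \eqref{biu} with spectral parameter $u-k+1$. Since every coefficient $b_k^{(s)}$ is $p$-central (Remark \ref{Remark:b sij}, i.e.\ \cite[Lemma 5.7]{BT18}), each $bc^{(r)}$ is a polynomial in the $b_k^{(s)}$ and therefore lies in the subalgebra generated by these diagonal $p$-central elements.

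The remaining task is to place the right-hand side inside the shape-$\mu$ $p$-center $Z_p({}^{\mu}Y_n(\sigma))$ rather than merely inside the Drinfeld one. To do this I would not pass to Drinfeld generators globally, but instead organize the factorization blockwise. By the block quantum determinant factorization \eqref{prop:HC center gb parabolic generators} of $c(u)$ and the commutativity of the diagonal blocks $D_a,D_b$ for $a\neq b$ (again \eqref{pr3}), one obtains
\[
bc(u)=\prod_{a=1}^{m}\;\prod_{j=0}^{p-1}\qdet D_a\big(u-p_a(\mu)-j\big).
\]
Using the shift map (Lemma \ref{Lemma:property of psi}) to reduce block $a$ to the leading block of $\bar{\mu}=(\mu_a,\dots,\mu_m)$, each block factor is built only from the $\mu_a\times\mu_a$ diagonal generators $D_{a;i,j}$, and I would show that its coefficients lie in the algebra generated by the $B_{a;i,j}^{(rp)}$ of \eqref{formula:baij}, which are exactly the generators $I_p$ of $Z_p({}^{\mu}Y_n(\sigma))$; the off-diagonal generators $J_p,K_p$ play no role. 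Combined with the first paragraph, this exhibits $bc^{(r)}$ as a polynomial in the central elements $B_{a;i,j}^{(rp)}$, hence $bc^{(r)}\in Z_p({}^{\mu}Y_n(\sigma))$.

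I expect the main obstacle to be this blockwise identification: matching the Drinfeld factors $b_k$ arising from the inner Gauss decomposition of a single block with the block generators $B_{a;i,j}^{(rp)}$ amounts to the shape-independence of the diagonal $p$-center restricted to one block. This is where the bookkeeping has to be done carefully, using \eqref{formula:baij}, the commutativity \eqref{Daijrs comm}, and the centrality already established in Lemma \ref{Lemma:Baij in center}, rather than any genuinely new relation; everything else is the routine rearrangement justified by the commutativity established above.
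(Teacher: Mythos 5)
Your overall route is the same as the paper's: factor $bc(u)=\prod_{k=1}^{n}b_k(u-k+1)$ using the commutativity of the $d_k$'s, and then handle the higher blocks via the shift map. The paper reduces in exactly this way to the claim that each $b_k^{(r)}$ lies in the algebra generated by $I_p$, treating $1\leq k\leq\mu_1$ first and the blocks $a\geq 2$ with $\psi_{p_a(\mu)}$ and Lemma \ref{Lemma:property of psi}, just as you propose. The difference is at the step you yourself flag as ``the main obstacle,'' and there your proposal has a genuine gap: matching the Drinfeld series $b_k(u)$ for $p_a(\mu)<k\leq p_{a+1}(\mu)$ with the block generators $B_{a;i,j}^{(rp)}$ is precisely the one-block shape-independence $Z_p({^{(1^{\mu_a})}}Y_{\mu_a})=Z_p({^{(\mu_a)}}Y_{\mu_a})$, which is \cite[Theorem 6.9]{BT18}. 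The paper imports it through the embedding $\eta\colon Y_{\mu_1}\hookrightarrow Y_n$, $t_{i,j}^{(r)}\mapsto D_{1;i,j}^{(r)}$, under which $\eta(s_{i,j}(u))=B_{1;i,j}(u)$ and $\eta(b_k(u))=b_k(u)$. This is a substantive theorem, not bookkeeping, and it cannot be extracted from \eqref{formula:baij}, the commutativity \eqref{Daijrs comm} and the centrality of Lemma \ref{Lemma:Baij in center} alone, contrary to your closing claim.

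To see why those tools are insufficient: centrality and commutativity give no formula expressing $b_k$ through the $B_{a;i,j}$. Already for a block of size $2$, $d_2(u)$ is the quasideterminant $t_{2,2}(u)-t_{2,1}(u)\bigl(t_{1,1}(u)\bigr)^{-1}t_{1,2}(u)$ (shifted into the block by \eqref{quasiD}), and to show that $b_2^{(r)}$ is a polynomial in the $s_{i,j}^{(sp)}$ one needs Gauss-decomposition identities such as $s_{1,1}(u)=b_1(u)$, $s_{1,2}(u)=b_1(u)(e_1(u))^p$ --- the identities \cite[(6.32),(6.33)]{BT18} quoted in the paper's second proof of Lemma \ref{Lemma:Baij in center} --- together with their companions for $s_{2,1}(u)$ and $s_{2,2}(u)$, whose proofs use the $E$/$F$ commutation relations via Leibniz-type computations of the kind in Lemma \ref{Lemma:EDE relations1}, not just relations among $D$-coefficients. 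In particular, your remark that ``the off-diagonal generators $J_p,K_p$ play no role'' is true of the final answer but false of the proof: the series $(e_k(u))^p$ and $(f_k(u))^p$ enter the intermediate identities essentially, and eliminating them to isolate $b_k$ is exactly the content of \cite[Theorem 6.9]{BT18}. Your argument becomes complete (and then coincides with the paper's) once the ``careful bookkeeping'' paragraph is replaced by an appeal to that theorem; as written, the key step is asserted rather than proved.
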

\begin{proof}
Recalling \eqref{biu}, we claim that
\[(\ast)\ \ \ \ b_k^{(r)}~\text{belongs to the algebra generated by}~I_p\]
for all $1\leq k\leq n$ and $r>0$.
The result follows from the claim since we know from \eqref{power series: HC center drinfeld} 
that each $bc^{(r)}$ can be expressed as a polynomial in the elements $\{b_k^{(r)};~1\leq k\leq n, r>0\}$.

To prove the claim, we first consider the natural embedding $\eta: Y_{\mu_1}\hookrightarrow Y_n;~t_{i,j}^{(r)}\mapsto D_{1,i,j}^{(r)}=t_{i,j}^{(r)}$.
Clearly, $\eta(s_{i,j}(u))=B_{1;i,j}(u)$ and $\eta(b_k(u))=b_k(u)$ for $1\leq i,j\leq \mu_1$ and $1\leq k\leq \mu_1$.
Thanks to \cite[Theorem 6.9]{BT18}, we know $Z_p({^{(1^{\mu_1})}}Y_{\mu_1})=Z_p({^{(\mu_1)}}Y_{\mu_1})$, 
that is, $b_k^{(r)}$ belongs to the algebra generated by the elements $\{s_{i,j}^{(rp)};~1\leq i,j\leq \mu_1, r>0\}$.
Consequently, $(\ast)$ holds for $1\leq k\leq \mu_1$. 
Assume that $p_a(\mu)+1\leq k\leq p_{a+1}(\mu)$ for some $a\geq 2$.
Let $\bar{\mu}:=(\mu_a,\mu_{a+1},\dots,\mu_m)$. 
We apply Lemma \ref{Lemma:property of psi} to see that
$\psi_{p_a(\mu)}(b_l(u))=b_{p_a(\mu)+l}(u)$ for all $1\leq l\leq \mu_a$
and $\psi_{p_a(\mu)}({^{\bar{\mu}}}B_{1;i,j}(u))={^\mu}B_{a;i,j}(u)$.
The foregoing observation implies that $b_l^{(r)}$ belongs to the algebra generated by the elements $\{{^{\bar{\mu}}}B_{1;i,j}^{(rp)};~1\leq i,j\leq \mu_a, r>0\}$.
As a result, we obtain $(\ast)$ for $p_a(\mu)+1\leq k\leq p_{a+1}(\mu)$.
\end{proof}

From the definition \eqref{bcu}, 
it follows that each $bc^{(r)}$ can be expressed as a polynomial in the elements $\{c^{(s)};~s>0\}$,
so that it belongs to $Z_{\HC}({^\mu}Y_n(\sigma))$.
By Lemma \ref{Lemma:bcr in Zpyn sigma}, we have just shown that $bc^{(r)}\in Z_{\HC}({^\mu}Y_n(\sigma))\cap Z_p({^\mu}Y_n(\sigma))$.
Moreover, \cite[Lemma 5.10]{BT18} shows that $bc^{(rp)}\in {\rm F}_{rp-p}{^\mu}Y_n(\sigma)$ and 
\begin{align}\label{gr rp bc}
\gr_{rp-p}bc^{(rp)}=z_{r-1}^p-z_{rp-p}.   
\end{align}

\begin{Theorem}\label{Theorem:main theorem}
The center $Z({^\mu}Y_n(\sigma))$ is generated by $Z_{\HC}({^\mu}Y_n(\sigma))$ and $Z_p({^\mu}Y_n(\sigma))$. 
Moreover,
\begin{itemize}
    \item[(1)] $Z_{\HC}({^\mu}Y_n(\sigma))$ is the free polynomial algebra generated by $\{c^{(r)};~r>0\}$;
    \item[(2)] $Z_p({^\mu}Y_n(\sigma))$ is the free polynomial algebra generated by $I_p\cup J_p\cup K_p$;
    \item[(3)] $Z({^\mu}Y_n(\sigma))$ is the free polynomial algebra generated by \[
    J_p\cup K_p\cup\{c^{(r)}, B_{a;i,j}^{(rp)};~1\leq a\leq m, 1\leq i,j\leq \mu_a, (a,i,j)\neq (1,1,1), r>0\};
    \]
    \item[(4)] $Z_{\HC}({^\mu}Y_n(\sigma))\cap Z_p({^\mu}Y_n(\sigma))$ is the free polynomial algebra generated by $\{bc^{(rp)};~r>0\}$.
\end{itemize}
\end{Theorem}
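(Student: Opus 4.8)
The plan is to reduce every assertion to the associated graded algebra, exploiting the identification $\gr{^\mu}Y_n(\sigma)\cong U(\fg_\sigma)$ from Lemma~\ref{lemma:grYnsigma=ugsigma} and the description of $Z(\fg_\sigma)$ in Theorem~\ref{theorem:center of ugsigma}. The basic principle I would use repeatedly is: if $A\subseteq B$ are subalgebras of a non-negatively filtered algebra whose loop filtration is exhaustive with $F_{-1}=0$, and if $\gr A=\gr B$, then $A=B$ (correct an element of $B$ of top degree $d$ by an element of $A$ with the same leading symbol and induct downward on $d$). Since leading symbols of central elements are central, one always has $\gr Z({^\mu}Y_n(\sigma))\subseteq Z(\fg_\sigma)$, so it suffices to pin down leading symbols and match them against Theorem~\ref{theorem:center of ugsigma}.

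First I would record the leading symbols. By $\gr_rc^{(r+1)}=z_r$ (\S\ref{section name HC center}), by \eqref{gr of sigma EF abijrp}, and by \eqref{gr baijrp}, the elements $c^{(r)}$, $({^\sigma}E_{a,b;i,j}^{(r)})^p$, $({^\sigma}F_{b,a;i,j}^{(r)})^p$ and $B_{a;i,j}^{(rp)}$ have leading symbols $z_{r-1}$, $(e_{a,b;i,j}t^{r-1})^p$, $(e_{b,a;i,j}t^{r-1})^p$ and $(e_{a,a;i,j}t^{r-1})^p-\delta_{i,j}e_{a,a;i,j}t^{(r-1)p}$. Writing $s=r-1$ and using admissibility ($s_{a,a}^\mu=0$), these run over exactly the elements $\{z_s\}$ together with all $\pi_{a,b;i,j}^{(s)}:=(e_{a,b;i,j}t^s)^p-\delta_{a,b}\delta_{i,j}e_{a,b;i,j}t^{sp}$ listed in \eqref{generator of p-centerof ug sigma}. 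Dropping the index $(a,i,j)=(1,1,1)$ removes precisely the redundant symbol $\pi_{1,1;1,1}^{(s)}$, so the leading symbols of the generators in (3) are exactly the free generators of $Z(\fg_\sigma)$ from Theorem~\ref{theorem:center of ugsigma}. As these are algebraically independent and generate $Z(\fg_\sigma)$, the filtered principle shows that the subalgebra they generate is all of $Z({^\mu}Y_n(\sigma))$ and that they are algebraically independent; this proves (3), hence also $Z({^\mu}Y_n(\sigma))=\langle Z_{\HC},Z_p\rangle$. The same count gives (2), since the symbols of $I_p\cup J_p\cup K_p$ are the free generators of the full $p$-centre $Z_p(\fg_\sigma)$ (including $(1,1,1,1)$), so that $\gr Z_p=Z_p(\fg_\sigma)$; and (1) is \cite[Theorem~5.1]{BT18}, recalled in \S\ref{section name HC center}.

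For (4) I would work entirely inside $Z(\fg_\sigma)=\kk[z_s]\otimes\kk[\pi_\nu]$, where $\nu$ runs over the indices $\neq(1,1,1,1)$. By (1), $\gr Z_{\HC}=\kk[z_s]$, and by (2), $\gr Z_p=Z_p(\fg_\sigma)$. Setting $\zeta_s:=z_s^p-z_{sp}$, the Frobenius identity $z_s^p=\sum_{a,i}(e_{a,a;i,i}t^s)^p$ gives $\zeta_s=\sum_{a,i}\pi_{a,a;i,i}^{(s)}\in Z_p(\fg_\sigma)$; substituting this for the redundant $\pi_{1,1;1,1}^{(s)}$ yields $\gr Z_p=\kk[\zeta_s]\otimes\kk[\pi_\nu]$. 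Since $\kk[\zeta_s]\subseteq\kk[z_s]$, the tensor decomposition gives $\gr Z_{\HC}\cap\gr Z_p=\kk[\zeta_s]$. Now $\gr(Z_{\HC}\cap Z_p)\subseteq\gr Z_{\HC}\cap\gr Z_p=\kk[\zeta_s]$, while \eqref{gr rp bc} shows $\gr_{rp-p}bc^{(rp)}=\zeta_{r-1}$ with $\{bc^{(rp)}\}\subseteq Z_{\HC}\cap Z_p$, so $\gr\kk[bc^{(rp)}]=\kk[\zeta_s]$ sandwiches the intersection. The filtered principle then forces $Z_{\HC}\cap Z_p=\kk[bc^{(rp)};~r>0]$, freely generated.

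I expect the main obstacle to be the bookkeeping around the excluded index $(1,1,1,1)$: verifying that the single linear relation $\zeta_s=\sum_{a,i}\pi_{a,a;i,i}^{(s)}$ is the only dependence tying $Z_{\HC}$ to $Z_p$, and that the $\zeta_s$ are themselves algebraically independent. The latter is a leading-monomial computation: grading by $\deg z_r=r$, each $\zeta_s$ is homogeneous of degree $sp$ with leading monomial $z_s^p$, and these lie in distinct variables for distinct $s$, so no cancellation can arise in a putative relation. The remaining care is to match the index ranges $r>s_{a,b}^\mu$ for $J_p,K_p$ against $s\geq s_{a,b}^\mu$ via $s=r-1$, so that the leading symbols land exactly on the generating set of Theorem~\ref{theorem:center of ugsigma}.
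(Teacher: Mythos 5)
Your proposal is correct and follows essentially the same route as the paper: pass to the associated graded algebra via Lemma \ref{lemma:grYnsigma=ugsigma}, match the leading symbols $z_{r-1}$, $(e_{a,b;i,j}t^{r-1})^p$ and $(e_{a,a;i,j}t^{r-1})^p-\delta_{i,j}e_{a,a;i,j}t^{rp-p}$ of $c^{(r)}$, $({^\sigma}E_{a,b;i,j}^{(r)})^p$, $({^\sigma}F_{b,a;i,j}^{(r)})^p$, $B_{a;i,j}^{(rp)}$ against the free generators of $Z(\fg_\sigma)$ from Theorem \ref{theorem:center of ugsigma}, and lift back by the filtered sandwich principle. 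For part (4) your substitution $\zeta_s=z_s^p-z_{sp}=\sum_{a,i}\pi_{a,a;i,i}^{(s)}$ replacing $\pi_{1,1;1,1}^{(s)}$ is exactly the graded shadow of the paper's replacement of $B_{1;1,1}^{(rp)}$ by $bc^{(rp)}$ in the generating set $(\ast\ast)$ of $Z_p({^\mu}Y_n(\sigma))$, with your tensor-decomposition computation of the intersection merely making explicit what the paper leaves implicit.
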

\begin{proof}
(1) This is Proposition \ref{prop-HC center gb parabolic generators},
see also \cite[Theorem 5.11(1)]{BT18}.

(2) We just need to check that they are algebraically independent.
This follows from \eqref{gr of sigma EF abijrp} and \eqref{gr baijrp},
because they are lifts of the algebraically independent generators of the $p$-center $Z_p(\fg_\sigma)$ of the associated graded algebra $U(\fg_\sigma)$ from \eqref{generator of p-centerof ug sigma}.

(3) Let $Z$ be the subalgebra of $Z({^\mu}Y_n(\sigma))$ generated by the given elements. We have that
\[
\gr Z\subseteq \gr Z({^\mu}Y_n(\sigma))\subseteq Z(\gr{^\mu}Y_n(\sigma))=Z(\fg_\sigma)
\]
Using again \eqref{gr of sigma EF abijrp} and \eqref{gr baijrp} in conjunction with Theorem \ref{theorem:center of ugsigma},
we know that generators of $Z$ are lifts of the algebraically independent generators of $Z(\fg_\sigma)$.
Hence, they are algebraically independent and $Z=Z({^\mu}Y_n(\sigma))$.
Moreover, (1) and (2) imply in particular that the center $Z({^\mu}Y_n(\sigma))$ is generated by $Z_{\HC}({^\mu}Y_n(\sigma))$ and $Z_p({^\mu}Y_n(\sigma))$.

(4) The algebraic independence follows
immediately from \eqref{gr rp bc}.
Observe that all $bc^{(r)}$ belong to $Z_p({^\mu}Y_n(\sigma))$ by Lemma \ref{Lemma:bcr in Zpyn sigma}.
We first show that $Z_p({^\mu}Y_n(\sigma))$ can be freely generated by the elements
\[
(\ast\ast)\ \ \ J_p\cup K_p\cup \{bc^{(rp)}, B_{a;i,j}^{(rp)};~1\leq a\leq m, 1\leq i,j\leq \mu_a, (a,i,j)\neq (1,1,1), r>0\}.
\]
We use \eqref{gr of sigma EF abijrp}, \eqref{gr baijrp} and \eqref{gr rp bc} to pass to the associated graded algebra,
it remains to show that $Z_p(\fg_\sigma)$ is freely generated by the elements
\begin{align*}
\{(e_{a,a;i,j}t^{r})^p-\delta_{i,j}e_{a,a;i,j}t^{rp};~1\leq a\leq m,1\leq i,j\leq \mu_a,&(a,a,i,j)\neq (1,1,1,1), r\geq 0\}\\
\cup\{(e_{a,b;i,j}t^{r})^p;~1\leq a<b\leq m, 1\leq i\leq \mu_a,& 1\leq j\leq \mu_b, r\geq s_{a,b}^{\mu}\}\\
 \cup \{(e_{b,a;i,j}t^{r})^p;~1\leq a<b\leq m,&1\leq j\leq \mu_a, 1\leq i\leq \mu_b, r\geq s_{b,a}^{\mu}\}\\
 &\cup\{z_r^p-z_{rp};~r\geq 0\}.
\end{align*}
This is easily seen by comparing them to the algebraically independent generators from \eqref{generator of p-centerof ug sigma}.
We know from (3) that all the elements in $(\ast\ast)$ different from 
$bc^{(rp)}$ are algebraically independent of anything in $Z_{\HC}({^\mu}Y_n(\sigma))$.
Our assertion thus follows from the fact that all $bc^{(r)}$ belong to $Z_p({^\mu}Y_n(\sigma))\cap Z_{\HC}({^\mu}Y_n(\sigma))$.
\end{proof}

Recall that $\mu=(\mu_1,\dots,\mu_m)$. 
Suppose that $\mu_b>1$ for some $1\leq b\leq m$ and we decompose $\mu_b=x+y$ for some positive integers $x,y$, 
and define a finer composition $\nu$ of length $m+1$ by setting $\nu_i=\mu_i$ for all $1\leq i\leq b-1$, $\nu_b=x$,
$\nu_{b+1}=y$, $\nu_{j+1}=\mu_j$ for all $b+1\leq j\leq m$; that is,
\begin{align}\label{def of nu}
\nu=(\mu_1\,\dots,\mu_{b-1},x,y,\mu_{b+1},\dots,\mu_m),
\end{align}
which is also admissible to $\sigma$ by definition.
Consider the Gauss decomposition of the matrix $T(u)$ with respect to $\mu$ and $\nu$, respectively:
\[
T(u)={^\mu}F(u){^\mu}D(u){^\mu}E(u)={^\nu}F(u){^\nu}D(u){^\nu}E(u),
\]
where the matrices are block matrices as described in Section \ref{subsection name:Gauss decomp}.
Denote by ${^\mu}D_a$ and ${^\nu}D_a$ the $a$-th diagonal matrices in ${^\mu}D(u)$ and ${^\nu}D(u)$, respectively.
Similarly, let ${^\mu}E_a$ and ${^\mu}F_a$ denote the matrices in the $a$-th upper and the $a$-th lower diagonal of $^{\mu}E(u)$
and ${^\mu}F(u)$, respectively;
${^\nu}E_a$ and ${^\nu}F_a$ denote the matrices in the $a$-th upper and the $a$-th lower diagonal of ${^\nu}E(u)$
and ${^\nu}F(u)$, respectively.

We may apply Lemma \ref{lemma:grYnsigma=ugsigma} and the following result to see that the algebra ${^\mu}Y_n(\sigma)$
is independent of the choice of the admissible shape $\mu$ (see \cite[page 150]{BK06}).
\begin{Lemma}\cite[Lemma 3.1]{BK06}\label{Lemma: split block lemma}
In the above notation, define an $x\times x$-matrix $A$, an $x\times y$-matrix $B$, 
a $y\times x$-matrix $C$ and a $y\times y$-matrix $D$ from the equation  
\[
{^\mu D_b}=\left(\begin{array}{cc}
     I_x&0  \\
     C& I_y
\end{array}\right)\left(\begin{array}{cc}
     A&0  \\
     0& D
\end{array}\right)\left(\begin{array}{cc}
     I_x&B  \\
     0& I_y
\end{array}\right).
\]
Then
\begin{itemize}
    \item[(i)] ${^\nu}D_a={^\mu}D_a$ for $a<b$, ${^\nu}D_b=A$, ${^\nu}D_{b+1}=D$, and ${^\nu}D_c={^\mu}D_{c-1}$ for $c>b+1$;
    \item[(ii)] ${^\nu}E_a={^\mu}E_a$ for $a<b-1$, ${^\nu}E_{b-1}$ is the submatrix consisting of the first $x$ columns of ${^\mu}E_{b-1}$,
    ${^\nu}E_b=B$, ${^\nu}E_{b+1}$ is the submatrix consisting of the last $y$ rows of ${^\mu}E_b$, and ${^\nu}E_c={^\mu}E_{c-1}$ for $c>b+1$;
    \item[(iii)] ${^\nu}F_a={^\mu}F_a$ for $a<b-1$, ${^\nu}F_{b-1}$ is the submatrix consisting of the first $x$ rows of ${^\mu}F_{b-1}$,
    ${^\nu}F_b=C$, ${^\nu}F_{b+1}$ is the submatrix consisting of the last $y$ columns of ${^\mu}F_b$, and ${^\nu}F_c={^\mu}F_{c-1}$ for $c>b+1$.
\end{itemize}
\end{Lemma}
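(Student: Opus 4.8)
The plan is to deduce the lemma from the \emph{uniqueness} of the Gauss factorization \eqref{gauss decomp} together with the supplied refinement of the central block ${}^\mu D_b$. First I would read the hypothesis as a factorization ${}^\mu D_b = L_b\,\hat{D}_b\,U_b$ in which $L_b = \left(\begin{smallmatrix} I_x & 0 \\ C & I_y\end{smallmatrix}\right)$ is lower-unitriangular, $\hat{D}_b = \left(\begin{smallmatrix} A & 0 \\ 0 & D\end{smallmatrix}\right)$ is block-diagonal, and $U_b = \left(\begin{smallmatrix} I_x & B \\ 0 & I_y\end{smallmatrix}\right)$ is upper-unitriangular, all with respect to the finer $(x,y)$-splitting of the $b$-th slot. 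I would then lift these to $n\times n$ matrices $\cL$, $\widehat{\ccD}$, $\cU$ that are block-diagonal for the shape $\mu$, equal to the identity in every diagonal block except the $b$-th, where they equal $L_b$, $\hat{D}_b$, $U_b$ respectively. By construction ${}^\mu D(u) = \cL\,\widehat{\ccD}\,\cU$, and substituting into the $\mu$-factorization gives
\[
T(u) = \big({}^\mu F(u)\,\cL\big)\,\widehat{\ccD}\,\big(\cU\,{}^\mu E(u)\big).
\]

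The key step is to recognize this as the Gauss factorization of $T(u)$ for the shape $\nu$. Here I would verify the three triangularity properties relative to $\nu$: the matrix ${}^\mu F(u)$ is $\nu$-block-lower-unitriangular (its $\mu$-diagonal identity blocks stay the identity once the $b$-th is refined into $\left(\begin{smallmatrix}I_x&0\\0&I_y\end{smallmatrix}\right)$), and $\cL$ is $\nu$-block-lower-unitriangular since $L_b$ contributes only the lower-left $\nu$-block $C$; hence the product ${}^\mu F(u)\,\cL$ is $\nu$-block-lower-unitriangular. Dually, $\cU\,{}^\mu E(u)$ is $\nu$-block-upper-unitriangular, while $\widehat{\ccD}$ is $\nu$-block-diagonal with diagonal $\nu$-blocks ${}^\mu D_a$ for $a<b$, then $A$, then $D$, then ${}^\mu D_{c-1}$ for $c>b+1$. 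By the uniqueness in \eqref{gauss decomp} I may then identify
\[
{}^\nu F(u) = {}^\mu F(u)\,\cL,\qquad {}^\nu D(u) = \widehat{\ccD},\qquad {}^\nu E(u) = \cU\,{}^\mu E(u),
\]
and part (i) is immediate from the diagonal blocks of $\widehat{\ccD}$.

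For (ii) and (iii) I would track the effect of $\cU$ and $\cL$ block-entry by block-entry. Left multiplication by $\cU$ alters only the $b$-th $\mu$-block row of ${}^\mu E(u)$, adding $B$ times its lower $y$ rows to its upper $x$ rows; thus the $(b,b)$-corner $I_{\mu_b}$ becomes $U_b$, whose $\nu$-superdiagonal entry is $B$, giving ${}^\nu E_b = B$. As the lower $y$ rows of block $b$ are fixed, ${}^\nu E_{b+1}$ is the last $y$ rows of ${}^\mu E_b$; as the block rows above $b$ are untouched while the $\nu$-block column $b$ is the first $x$ columns of the $\mu$-block column $b$, the block ${}^\nu E_{b-1}$ is the first $x$ columns of ${}^\mu E_{b-1}$; the remaining entries are unchanged up to the index shift $c\mapsto c-1$ for $c>b+1$. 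The statements for ${}^\nu F$ follow by the symmetric computation for right multiplication by $\cL$ (with $L_b$ contributing $C$ on the $\nu$-subdiagonal), or by applying the transposition anti-automorphism $\tau$.

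I expect the main obstacle to be purely bookkeeping: correctly matching the $\nu$-block indices, which shift by one past the $b$-th slot, against the ``first $x$ columns / last $y$ rows'' descriptions, and confirming that multiplication by $\cL$ and $\cU$ touches exactly the claimed strips and nothing else. Since every step is a formal manipulation of block matrices over $Y_n[[u^{-1}]]$ relying only on uniqueness of the Gauss factorization, no feature of positive characteristic intervenes, and the argument of \cite[Lemma 3.1]{BK06} transfers verbatim.
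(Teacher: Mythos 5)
Your proposal is correct and takes essentially the same route as the paper, which simply quotes \cite[Lemma 3.1]{BK06}: there, too, one lifts the refined factorization of ${}^\mu D_b$ to block matrices $\cL$, $\widehat{\ccD}$, $\cU$, rewrites $T(u)=\big({}^\mu F(u)\,\cL\big)\widehat{\ccD}\big(\cU\,{}^\mu E(u)\big)$, checks $\nu$-block triangularity, and concludes by the uniqueness of the Gauss factorization \eqref{gauss decomp}. Your block-by-block bookkeeping for (ii) and (iii) is accurate, and your closing observation is exactly why the paper cites the characteristic-zero source verbatim: the argument is a formal manipulation over $Y_n[[u^{-1}]]$ with no dependence on $\Char(\kk)$.
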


\begin{Remark}\label{Remark:highly plausible}
In the special case $\mu=(1^n)$, the definition of the $p$-center was given by Brundan and Topley in \cite[(5.18)]{BT18}.
In the following, we show that the $p$-center $Z_p({^\mu}Y_n(\sigma))$ of $Y_n(\sigma)$ is independent of the choice of the admissible shape $\mu$.
Hence we obtain a generalization of \cite[Theorem 6.9]{BT18}.
\end{Remark}

\begin{Lemma}\label{lemma: p-center stable under permutation auto}
Let $w$ be any permutation automorphism of $Y_n$ from Section \ref{section: auto}.
Then we have
\[
w(Z_p({^{(1^n)}}Y_n))=w(Z_p({^{(n)}}Y_n))=Z_p({^{(n)}}Y_n)=Z_p({^{(1^n)}}Y_n).
\]
\end{Lemma}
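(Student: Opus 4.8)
The plan is to establish the chain of four equalities by exploiting two structural facts proved earlier in the paper. First, by the two extreme-case computations recorded in Remark~\ref{Remark:b sij}, the generators of $Z_p({^{(1^n)}}Y_n)$ are $\{b_i^{(rp)}\}$ together with the off-diagonal $p$-th powers $\{(e_i(u))^p,(f_i(u))^p\}$-coefficients, while $Z_p({^{(n)}}Y_n)$ is generated by $\{s_{i,j}^{(rp)}\}$, where $s_{i,j}(u)=t_{i,j}(u)t_{i,j}(u-1)\cdots t_{i,j}(u-p+1)$. The crucial input is \cite[Theorem~6.9]{BT18}, which states precisely that these two $p$-centers coincide: $Z_p({^{(1^n)}}Y_n)=Z_p({^{(n)}}Y_n)$. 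This already gives me the rightmost equality in the statement for free.

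\textbf{Permutation-equivariance of the RTT $p$-center.} First I would show $w(Z_p({^{(n)}}Y_n))=Z_p({^{(n)}}Y_n)$. Recall from Section~\ref{section: auto}(3) that $w$ acts on RTT generators by $t_{i,j}^{(r)}\mapsto t_{w(i),w(j)}^{(r)}$, and this action commutes with the generating-series formalism, so $w(t_{i,j}(u))=t_{w(i),w(j)}(u)$. Since the defining product $s_{i,j}(u)=t_{i,j}(u)t_{i,j}(u-1)\cdots t_{i,j}(u-p+1)$ involves only the single entry $t_{i,j}$, the automorphism $w$ simply permutes the labels: $w(s_{i,j}(u))=s_{w(i),w(j)}(u)$, hence $w(s_{i,j}^{(rp)})=s_{w(i),w(j)}^{(rp)}$. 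As $(i,j)$ ranges over all pairs, so does $(w(i),w(j))$, so $w$ permutes the full generating set $\{s_{i,j}^{(rp)}\}$ of $Z_p({^{(n)}}Y_n)$ among itself. Therefore $w$ maps this algebra onto itself, giving the third equality.

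\textbf{Assembling the chain.} Combining the previous two steps, $w(Z_p({^{(n)}}Y_n))=Z_p({^{(n)}}Y_n)=Z_p({^{(1^n)}}Y_n)$, which settles the second, third, and fourth equalities. It remains to prove the first equality $w(Z_p({^{(1^n)}}Y_n))=w(Z_p({^{(n)}}Y_n))$; but since $Z_p({^{(1^n)}}Y_n)=Z_p({^{(n)}}Y_n)$ by \cite[Theorem~6.9]{BT18}, applying the single automorphism $w$ to both sides yields this immediately. Thus all four quantities agree.

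\textbf{Anticipated main obstacle.} The argument is light precisely because \cite[Theorem~6.9]{BT18} does the heavy lifting; the only genuine content I would have to verify carefully is that $w$ stabilizes $Z_p({^{(n)}}Y_n)$, and there the delicate point is confirming that $w$ respects the shifted-argument product defining $s_{i,j}(u)$. The potential subtlety is that $w$ must commute with the substitution $u\mapsto u-1$; but since $w$ is a $\kk$-algebra automorphism fixing the formal variable $u$ and acting coefficientwise, it commutes with all spectral shifts, so $w\bigl(t_{i,j}(u-c)\bigr)=t_{w(i),w(j)}(u-c)$ for every constant $c$. This makes the label-permutation computation routine, and no further estimates are needed.
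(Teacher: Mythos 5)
Your proposal is correct and follows essentially the same route as the paper: invoke \cite[Theorem 6.9]{BT18} for $Z_p({^{(1^n)}}Y_n)=Z_p({^{(n)}}Y_n)$, then observe that $w(s_{i,j}(u))=s_{w(i),w(j)}(u)$ permutes the generating set $\{s_{i,j}^{(rp)}\}$ of $Z_p({^{(n)}}Y_n)$, which settles all four equalities. (Your side description of the generators of $Z_p({^{(1^n)}}Y_n)$ is slightly off --- the off-diagonal generators are the coefficients of $(e_{a,b}(u))^p$ and $(f_{b,a}(u))^p$ for \emph{all} $1\leq a<b\leq n$, not just the simple-root series $e_i$, $f_i$ --- but this plays no role in your argument.)
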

\begin{proof}
Thanks to \cite[Theorem 6.9]{BT18}, we have
$Z_p({^{(1^n)}}Y_n)=Z_p({^{(n)}}Y_n)$.
It actually suffices to check just the second equality.
As $Z_p({^{(n)}}Y_n)$ is generated by the elements $\{s_{i,j}^{(rp)};~1\leq i,j\leq n, r>0\}$,
our assertion follows from the definition that $w(s_{i,j}(u))=s_{w(i),w(j)}(u)$.
\end{proof}

\begin{Lemma}\label{Lemma:psi pcenter to p-center}
Let $\bar{\mu}:=(\mu_a,\mu_{a+1},\dots,\mu_m)$ and we put $k:=p_a(\mu)=\mu_1+\cdots+\mu_{a-1}$.
Then we have 
\[
\psi_k(Z_p({^{\bar{\mu}}}Y_{n-k}))\subseteq Z_p({^{\mu}}Y_n).
\]
\end{Lemma}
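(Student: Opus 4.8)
The plan is to use that $\psi_k$ is an algebra homomorphism, so that the inclusion reduces to a statement about generators. By Theorem \ref{Theorem:main theorem}(2) applied with shape $\bar{\mu}$ (and $\sigma=0$), the algebra $Z_p({^{\bar{\mu}}}Y_{n-k})$ is generated by the three families
\[
\{{^{\bar{\mu}}}B_{c;i,j}^{(rp)}\},\quad
\{({^{\bar{\mu}}}E_{c,d;i,j}^{(r)})^p\},\quad
\{({^{\bar{\mu}}}F_{d,c;i,j}^{(r)})^p\},
\]
so it suffices to check that $\psi_k$ sends each of these elements into $Z_p({^{\mu}}Y_n)$. The key point on which everything rests is the block-shifting behaviour of $\psi_k$ on the parabolic generators of $Y_{n-k}$.

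First I would establish, for every block $c$ of $\bar{\mu}$ and all admissible $i,j$, the formulas
\[
\psi_k({^{\bar{\mu}}}D_{c;i,j}(u))={^{\mu}}D_{a+c-1;i,j}(u),\quad
\psi_k({^{\bar{\mu}}}E_{c;i,j}(u))={^{\mu}}E_{a+c-1;i,j}(u),\quad
\psi_k({^{\bar{\mu}}}F_{c;i,j}(u))={^{\mu}}F_{a+c-1;i,j}(u).
\]
For $c=1$ these are precisely \eqref{psik D}--\eqref{psik F}. For $c\geq 2$ I would deduce them from the composition law $\psi_k\circ\psi_l=\psi_{k+l}$ for shift maps, which follows at once from the definition $\psi_k=\omega_{k+N}\circ\varphi_k\circ\omega_N$ together with the involutivity $\omega_N^2=\id$ and the additivity $\varphi_k\circ\varphi_l=\varphi_{k+l}$. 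Indeed, writing $\hat{\mu}:=(\mu_{a+c-1},\dots,\mu_m)$ for the common tail of $\bar{\mu}$ (from its $c$-th block) and of $\mu$ (from its $(a+c-1)$-th block), and noting $k+p_c(\bar{\mu})=p_{a+c-1}(\mu)$, one applies \eqref{psik D} twice to obtain
\[
\psi_k({^{\bar{\mu}}}D_{c;i,j}(u))=\psi_k\psi_{p_c(\bar{\mu})}({^{\hat{\mu}}}D_{1;i,j}(u))=\psi_{p_{a+c-1}(\mu)}({^{\hat{\mu}}}D_{1;i,j}(u))={^{\mu}}D_{a+c-1;i,j}(u),
\]
and identically for $E$ and $F$ via \eqref{psik E}--\eqref{psik F}.

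With these in hand the three families are disposed of in turn. Applying $\psi_k$ coefficientwise to the product \eqref{formula:baij} (the variable shifts $u\mapsto u-1,\dots$ commute with $\psi_k$) gives $\psi_k({^{\bar{\mu}}}B_{c;i,j}(u))={^{\mu}}B_{a+c-1;i,j}(u)$, hence $\psi_k({^{\bar{\mu}}}B_{c;i,j}^{(rp)})={^{\mu}}B_{a+c-1;i,j}^{(rp)}$, which lies in the first generating family of $Z_p({^{\mu}}Y_n)$. For the off-diagonal families I would first promote the block-shifting formula to the higher root elements: since $\psi_k$ preserves commutators, induction on $d-c$ through the recursions \eqref{EFabij r induction formula} yields
\[
\psi_k({^{\bar{\mu}}}E_{c,d;i,j}^{(r)})={^{\mu}}E_{a+c-1,a+d-1;i,j}^{(r)},\quad
\psi_k({^{\bar{\mu}}}F_{d,c;j,i}^{(r)})={^{\mu}}F_{a+d-1,a+c-1;j,i}^{(r)}.
\]
As $\psi_k$ is an algebra homomorphism it commutes with $p$-th powers, so the images $({^{\mu}}E_{a+c-1,a+d-1;i,j}^{(r)})^p$ and $({^{\mu}}F_{a+d-1,a+c-1;j,i}^{(r)})^p$ land in the second and third generating families of $Z_p({^{\mu}}Y_n)$, respectively. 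Thus $\psi_k$ carries every generator of $Z_p({^{\bar{\mu}}}Y_{n-k})$ into $Z_p({^{\mu}}Y_n)$, and the inclusion follows.

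I expect the only genuine obstacle to be the passage from the leading block to an arbitrary block: Lemma \ref{Lemma:property of psi}(2) is formulated only for $c=1$, and upgrading it to all $c$ is exactly what forces the composition law for the shift maps and the identification of the common tail shape $\hat{\mu}$. Once that identity is in place, the remaining steps — forming the $B$-products, the commutator induction for the higher root vectors, and commuting $\psi_k$ past $p$-th powers — are purely formal.
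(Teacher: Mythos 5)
Your proof is correct and follows essentially the same route as the paper: reduce to the generating families $\{B_{c;i,j}^{(rp)}\}$, $\{(E_{c,d;i,j}^{(r)})^p\}$, $\{(F_{d,c;j,i}^{(r)})^p\}$ of $Z_p({^{\bar{\mu}}}Y_{n-k})$ and compute their images under the algebra homomorphism $\psi_k$ via Lemma \ref{Lemma:property of psi} together with the recursions \eqref{EFabij r induction formula}. Your extra step---upgrading Lemma \ref{Lemma:property of psi}(2) from the leading block to an arbitrary block $c$ via the composition law $\psi_k\circ\psi_l=\psi_{k+l}$, landing on the correct target block $a+c-1$---is precisely the detail the paper's two-line proof leaves implicit (its displayed formulas even write the shifted block indices as $c+k$, $d+k$, conflating the row shift $k$ with the block shift), so your write-up is if anything more careful than the original.
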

\begin{proof}
Notice that the $p$-center $Z_p({^{\bar{\mu}}}Y_{n-k})$ is generated by the elements
\begin{align*}
\{B_{c;i,j}^{(rp)};~1\leq c\leq m-a&+1, 1\leq i,j\leq\mu_{a+c-1}, r>0\}\\
\cup\{(E_{c,d;i,j}^{(r)})^p;~&1\leq c<d\leq m-a+1, 1\leq i\leq\mu_{a+c-1}, 1\leq j\leq\mu_{a+d-1}, r>0\}\\
\cup \{&(F_{d,c;j,i}^{(r)})^p;~1\leq c<d\leq m-a+1, 1\leq i\leq\mu_{a+c-1}, 1\leq j\leq\mu_{a+d-1}, r>0\}.
\end{align*}
We apply Lemma \ref{Lemma:property of psi} in conjunction with \eqref{EFabij r induction formula} to see that
\[
\psi_k(B_{c;i,j}^{(rp)})=B_{c+k;i,j}^{(rp)}, \psi_k((E_{c,d;i,j}^{(r)})^p)=(E_{c+k,d+k;i,j}^{(r)})^p ~\text{and}~ \psi_k((F_{d,c;j,i}^{(r)})^p)=(F_{d+k,c+k;j,i}^{(r)})^p.
\]
Since all the right hand side elements of the above equalities are contained in the $p$-center $Z_p({^{\mu}}Y_n)$,
the assertion follows.
\end{proof}

\begin{Proposition}\label{prop: p-center of Yangian all equal}
For any composition $\mu$ of $n$,
we have that $Z_p({^\mu}Y_n)=Z_p({^{(1^n)}}Y_n)$. 
\end{Proposition}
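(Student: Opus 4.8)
The plan is to induct on $n$, using one-step refinements of compositions as the basic move. For $n=1$ there is nothing to prove, so assume $n\geq 2$ and that the proposition holds for all $Y_{n'}$ with $n'<n$. The anchor for the induction is the equality $Z_p({}^{(n)}Y_n)=Z_p({}^{(1^n)}Y_n)$ supplied by \cite[Theorem 6.9]{BT18} and recorded in Lemma \ref{lemma: p-center stable under permutation auto}. Since every composition $\mu$ of $n$ can be refined to $(1^n)$ by a finite chain of one-step block splittings, it suffices to prove that a single splitting does not change the $p$-center: if $\nu$ is obtained from $\mu$ by splitting the block $\mu_b=x+y$ as in \eqref{def of nu}, then $Z_p({}^{\mu}Y_n)=Z_p({}^{\nu}Y_n)$. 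When $\mu$ has at least two parts every block, and in particular every $\mu_b$ to be split, has size $<n$, which is exactly what lets the inductive hypothesis enter; the one-part case $\mu=(n)$ is handled directly by the anchor.

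To compare the two $p$-centers I would match their generating sets $I_p\cup J_p\cup K_p$ entry by entry using Lemma \ref{Lemma: split block lemma}. By part (i) the diagonal matrices ${}^\nu D_a$ away from the split coincide, after reindexing, with the ${}^\mu D_a$, so the corresponding generators $B_{a;i,j}^{(rp)}$ agree; by parts (ii) and (iii) the off-diagonal matrices ${}^\nu E_a,{}^\nu F_a$ not created by the split are submatrices of the ${}^\mu E,{}^\mu F$, i.e. have literally the same power-series entries in $Y_n$. Using the inductive definition \eqref{EFabij r induction formula} of the higher root elements one checks that every root vector $E_{a',c';i,j}^{(r)}$, $F_{c',a';i,j}^{(r)}$ for $\nu$ either equals one for $\mu$ or is one of the new elements supported on the split block coming from the matrices $B$ and $C$ of Lemma \ref{Lemma: split block lemma}. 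Consequently the two generating sets differ only on the split block: $\mu$ contributes $\{B_{b;i,j}^{(rp)}\}$, while $\nu$ contributes $\{B_{b;i,j}^{(rp)},B_{b+1;i,j}^{(rp)},(E_{b;i,j}^{(r)})^p,(F_{b;i,j}^{(r)})^p\}$.

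It then remains to show these two local families generate the same subalgebra. Here I would realize the diagonal block ${}^\mu D_b(u)$ as an embedded Yangian: composing the shift map with the obvious inclusion as in the proof of Lemma \ref{Lemma:bcr in Zpyn sigma}, and using Lemma \ref{Lemma:property of psi}(2), one obtains an algebra embedding $\eta_b\colon Y_{\mu_b}\hookrightarrow Y_n$ sending the RTT matrix $T(u)$ of $Y_{\mu_b}$ to ${}^\mu D_b(u)$. Because a quasideterminant Gauss factorization \eqref{quasiD}--\eqref{quasiF} is given by universal formulas in the matrix entries, $\eta_b$ carries the Gauss data of $T(u)$ for the composition $(\mu_b)$, resp. $(x,y)$, of $\mu_b$ onto the matrices ${}^\mu D_b$, resp. $A,B,C,D={}^\nu D_b,{}^\nu E_b,{}^\nu F_b,{}^\nu D_{b+1}$ of Lemma \ref{Lemma: split block lemma}. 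Thus $\eta_b\big(Z_p({}^{(\mu_b)}Y_{\mu_b})\big)$ is generated by $\{B_{b;i,j}^{(rp)}\}$ and $\eta_b\big(Z_p({}^{(x,y)}Y_{\mu_b})\big)$ by $\{B_{b;i,j}^{(rp)},B_{b+1;i,j}^{(rp)},(E_{b;i,j}^{(r)})^p,(F_{b;i,j}^{(r)})^p\}$. Since $\mu_b<n$, the inductive hypothesis gives $Z_p({}^{(\mu_b)}Y_{\mu_b})=Z_p({}^{(x,y)}Y_{\mu_b})$, and applying the injective homomorphism $\eta_b$ yields equality of the two local subalgebras. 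Together with the previous paragraph this gives $Z_p({}^{\mu}Y_n)=Z_p({}^{\nu}Y_n)$, completing the induction.

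The main obstacle is the bookkeeping in the second paragraph: verifying that, after splitting, every off-diagonal root vector, including the higher ones built by \eqref{EFabij r induction formula}, is accounted for exactly once with matching $p$-th powers, so that the only genuine discrepancy between the $\mu$- and $\nu$-generating sets is localized on the split block. Once that matching is pinned down, the embedding argument cleanly reduces the whole statement to the lower-rank case available by induction.
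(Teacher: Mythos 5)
Your proposal breaks at exactly the point you flag as ``bookkeeping'': the dichotomy in your second paragraph is false. Writing ${^\mu}D_b=\Lambda\Delta\Upsilon$ with $\Lambda,\Upsilon$ the unitriangular factors of Lemma \ref{Lemma: split block lemma} and using uniqueness of the Gauss factorization, one gets ${^\nu}E(u)=\Upsilon(u)\,{^\mu}E(u)$ and ${^\nu}F(u)={^\mu}F(u)\,\Lambda(u)$. Hence for a $\nu$-block pair straddling the split, i.e.\ for $c>b+1$ and $1\leq i\leq x$,
\[
{^\nu}E_{b,c;i,j}(u)={^\mu}E_{b,c-1;i,j}(u)+\sum_{k=1}^{y}{^\nu}E_{b;i,k}(u)\,{^\mu}E_{b,c-1;x+k,j}(u),
\]
and dually on the $F$-side with $C$ multiplying on the right. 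So a root vector sourced in the first part of the split block with target beyond it is \emph{not} equal to any $\mu$-root vector, and since the two summands need not commute, $({^\nu}E_{b,c;i,j}^{(r)})^p$ is not obtained from the $\mu$-generating set by any naive manipulation: Jacobson's formula for $(X+Y)^p$ produces commutator cross terms that your argument does not control. The smallest instance already shows this: for $n=3$, $\mu=(2,1)$, $\nu=(1^3)$ one has $e_{1,3}(u)={^\mu}E_{1,2;1,1}(u)+e_1(u)\,{^\mu}E_{1,2;2,1}(u)$, so $(e_{1,3}^{(r)})^p$ is not visibly in the algebra generated by the $\mu$-data. Thus the two generating sets do \emph{not} differ only on the split block, and the conclusion $Z_p({^\mu}Y_n)=Z_p({^\nu}Y_n)$ of your induction step does not follow from what you prove.

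It is worth seeing how the paper avoids this, because it uses two devices absent from your write-up. First, it never attempts the two-sided equality by matching generators: since $\gr Z_p({^\mu}Y_n)$ is identified with $Z_p(\fg)$ for \emph{every} $\mu$, a single inclusion $Z_p({^\mu}Y_n)\subseteq Z_p({^{(1^n)}}Y_n)$ already forces equality. Second, for the off-diagonal generators it transports each higher root vector $E_{a,b;i,j}^{(r)}$ to a one-step element $E_{a;i,1}^{(r)}$ by the permutation automorphisms of Lemma \ref{lemma:DEFab-DEFa}, exploiting that $Z_p({^{(1^n)}}Y_n)=Z_p({^{(n)}}Y_n)$ is manifestly stable under all permutation automorphisms (Lemma \ref{lemma: p-center stable under permutation auto}); this is where the anchor \cite[Theorem 6.9]{BT18} carries the real load, not merely as a base case. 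Your third paragraph, by contrast, is sound and attractive: the embedding $\eta_b\colon Y_{\mu_b}\hookrightarrow Y_n$ with $T(u)\mapsto{^\mu}D_b(u)$ is a legitimate composition of the corner embedding with $\psi_{p_b(\mu)}$ (Lemma \ref{Lemma:property of psi}), it does carry the $(x,y)$-Gauss data onto $(A,B,C,D)$, and combined with induction in rank it correctly handles the diagonal and the single new $E_b,F_b$ block --- essentially reproving the paper's case (a) and Lemma \ref{Lemma:psi pcenter to p-center}. To repair the proposal you would need either the permutation-stability device or a direct proof that the cross terms displayed above lie in $Z_p({^\mu}Y_n)$; neither is supplied.
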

\begin{proof}
Since $\gr Z_p({^\mu}Y_n)$ can be identified with the $p$-center $Z_p(\fg)$ for any $\mu$,
it suffices to show that $Z_p({^\mu}Y_n)\subseteq Z_p({^{(1^n)}}Y_n)$.
We prove by applying induction on the length of the shape $\mu$.
If $\mu_j=1$ for all $j$, then we are done.
Otherwise, suppose that $\mu_b>1$ for some $1\leq b\leq m$.
Then we split the $b$-th block to get a new shape $\nu$ just like in \eqref{def of nu}.

Two cases arise:

(a) For all $a=1,\dots,m$, $r>0$ and admissible $i,j$,
the element $B_{a;i,j}^{(rp)}$ belongs to $Z_p({^{(1^n)}}Y_n)$. 

We put $k:=p_a(\mu)=\mu_1+\cdots+\mu_{a-1}$.
Using \eqref{psik D} and observing $D_{1;i,j}(u)=t_{i,j}(u)$, we obtain
\[
B_{a;i,j}^{(rp)}=\psi_k(B_{1;i,j}^{(rp)})=\psi_k(s_{i,j}^{(rp)}).
\]
Notice that $s_{i,j}^{(rp)}\in Z_p({^{(n-k)}}Y_{n-k})=Z_p({^{(1^{n-k})}}Y_{n-k})$.
Then Lemma \ref{Lemma:psi pcenter to p-center} yields the assertion.

(b) For all $1\leq c<d\leq m$, $r>0$ and admissible $i,j$,
the elements $(E_{c,d;i,j}^{(r)})^p$ and $(F_{d,c;j,i}^{(r)})^p$ belong to $Z_p({^{(1^n)}}Y_n)$.

Using Lemma \ref{lemma: p-center stable under permutation auto} and Lemma \ref{lemma:DEFab-DEFa},
the proof of this reduces to checking that the element $(E_{c;i,1}^{(r)})^p$ belongs to $Z_p({^{(1^n)}}Y_n)$ for all $1\leq c\leq m-1$ and admissible $i$.
By the induction hypothesis, we only need to check $(E_{c;i,1}^{(r)})^p\in Z_p({^{\nu}}Y_n)$.
Lemma \ref{Lemma: split block lemma} yields $(E_{c;i,1}^{(r)})^p\in Z_p({^{\nu}}Y_n)$ for every $c\neq b$.
It remains to show that $(E_{b;i,1}^{(r)})^p\in Z_p({^{\nu}}Y_n)$.
Again by Lemma \ref{Lemma:psi pcenter to p-center}, we may assume that $b=1$.
We have by \eqref{quasiE} that $E_1(u)=D_1'(u)T_{1,2}(u)$.
Direct computation shows that the permutation automorphism by $i\mapsto \mu_1$ maps $E_{1;i,1}(u)\mapsto E_{1;\mu_1,1}(u)$.
Observing that $(E_{1;\mu_1,1}^{(r)})^p\in Z_p({^{\nu}}Y_n)$ by Lemma \ref{Lemma: split block lemma}(ii) in conjunction with Lemma \ref{lemma: p-center stable under permutation auto}, we thus obtain $(E_{b;i,1}^{(r)})^p\in Z_p({^{\nu}}Y_n)$. 
One argues similarly for $(F_{d,c;j,i}^{(r)})^p$.
\end{proof}

\begin{Lemma}\label{lemma:iota send center = center}
Let $\dot{\sigma}=(\dot{s}_{i,j})_{1\leq i,j\leq n}$
be another shift matrix satisfying $s_{i,i+1}+s_{i+1,i}=\dot{s}_{i,i+1}+\dot{s}_{i+1,i}$ for all $i=1,\dots,n-1$,
and let $\iota$ be the corresponding change of shift matrix isomorphism from Section \ref{section: auto}.
Then we have
\[
\iota(Z_p({^\mu}Y_n(\sigma)))=Z_p({^\mu}Y_n(\dot{\sigma})).
\]
\end{Lemma}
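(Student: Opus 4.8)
The plan is to exploit that $\iota$ is an algebra \emph{isomorphism} and that both $p$-centers are \emph{defined} by explicit generating sets, so it suffices to track $\iota$ on generators. Since the defining condition $s_{i,i+1}+s_{i+1,i}=\dot{s}_{i,i+1}+\dot{s}_{i+1,i}$ is symmetric in $\sigma$ and $\dot{\sigma}$, the inverse $\iota^{-1}$ is itself a change of shift matrix isomorphism (from $\dot{\sigma}$ to $\sigma$). Hence it is enough to prove the inclusion $\iota(Z_p({^\mu}Y_n(\sigma)))\subseteq Z_p({^\mu}Y_n(\dot{\sigma}))$ on the generators $I_p\cup J_p\cup K_p$; applying the same inclusion to $\iota^{-1}$ then yields the reverse containment and hence equality.

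First I would dispose of the diagonal generators $I_p$. The series $B_{a;i,j}(u)$ from \eqref{formula:baij} is a product of the series $D_{a;i,j}(u)$, and $\iota$ fixes every $D_{a;i,j}^{(r)}$ by its definition in Section \ref{section: auto}. Therefore $\iota(B_{a;i,j}(u))=B_{a;i,j}(u)$, so $\iota(B_{a;i,j}^{(rp)})=B_{a;i,j}^{(rp)}$ already lies in the generating set $I_p$ of $Z_p({^\mu}Y_n(\dot{\sigma}))$.

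The technical core is the behaviour of $\iota$ on the higher root vectors. I claim that for all $1\le a<b\le m$, admissible $i,j$ and $r>s_{a,b}^{\mu}$,
\begin{align*}
\iota({^\sigma}E_{a,b;i,j}^{(r)})={^{\dot{\sigma}}}E_{a,b;i,j}^{(r-s_{a,b}^{\mu}+\dot{s}_{a,b}^{\mu})},
\end{align*}
which I would prove by induction on $b-a$. The base case $b=a+1$ is exactly the defining formula for $\iota$ on the simple root element ${^\sigma}E_{a;i,j}^{(r)}$. For the inductive step I apply $\iota$ to the recursion \eqref{sigma Eabij}, use that $\iota$ is an algebra homomorphism, and invoke the induction hypothesis on ${^\sigma}E_{a,b-1;i,k}^{(\cdot)}$ together with the base case on $E_{b-1;k,j}^{(\cdot)}$; the resulting bracket then matches the recursion \eqref{sigma Eabij} defining ${^{\dot{\sigma}}}E_{a,b;i,j}$ over $\dot{\sigma}$, \emph{provided} the two index corrections telescope consistently. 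This telescoping is precisely the additivity $s_{a,b}^{\mu}=s_{a,b-1}^{\mu}+s_{b-1,b}^{\mu}$ (and likewise for $\dot{\sigma}$), which follows from \eqref{shift matrix condition} applied to the increasing indices $p_{a+1}(\mu)\le p_b(\mu)\le p_{b+1}(\mu)$. Checking that the shift corrections add up compatibly is the only real obstacle, and the special case with $\dot{\sigma}$ lower triangular was already recorded in the proof of Corollary \ref{corollary:central EF eleemnts ynsigma}. An entirely parallel induction using \eqref{sigma Fbaji} gives $\iota({^\sigma}F_{b,a;j,i}^{(r)})={^{\dot{\sigma}}}F_{b,a;j,i}^{(r-s_{b,a}^{\mu}+\dot{s}_{b,a}^{\mu})}$.

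Raising to the $p$-th power, I conclude $\iota(({^\sigma}E_{a,b;i,j}^{(r)})^p)=({^{\dot{\sigma}}}E_{a,b;i,j}^{(r')})^p$ with $r'=r-s_{a,b}^{\mu}+\dot{s}_{a,b}^{\mu}$, and the constraint $r>s_{a,b}^{\mu}$ is equivalent to $r'>\dot{s}_{a,b}^{\mu}$. Thus $\iota$ carries $J_p$ for $\sigma$ bijectively onto $J_p$ for $\dot{\sigma}$, and similarly $K_p$ onto $K_p$, while fixing $I_p$. This gives $\iota(Z_p({^\mu}Y_n(\sigma)))\subseteq Z_p({^\mu}Y_n(\dot{\sigma}))$, and the reverse inclusion follows by the symmetric argument applied to $\iota^{-1}$, completing the proof.
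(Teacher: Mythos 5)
Your proof is correct and follows essentially the same route as the paper: both establish, by induction on $b-a$ via the recursions \eqref{sigma Eabij} and \eqref{sigma Fbaji}, that $\iota({^\sigma}E_{a,b;i,j}^{(r)})={^{\dot{\sigma}}}E_{a,b;i,j}^{(r-s_{a,b}^{\mu}+\dot{s}_{a,b}^{\mu})}$ together with the analogous formula for ${^\sigma}F_{b,a;j,i}^{(r)}$, and then conclude directly from the definition of the $p$-center. The details you spell out --- the telescoping $s_{a,b}^{\mu}=s_{a,b-1}^{\mu}+s_{b-1,b}^{\mu}$ coming from \eqref{shift matrix condition}, the fact that $\iota$ fixes the generators $I_p$, and the equivalence of the ranges $r>s_{a,b}^{\mu}$ and $r'>\dot{s}_{a,b}^{\mu}$ --- are exactly what the paper leaves implicit in ``it is easy to see.''
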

\begin{proof}
Using \eqref{sigma Eabij} and \eqref{sigma Fbaji} and induction on $b-a$,
it is easy to see that
\[
\iota({^\sigma}E_{a,b;i,j}^{(r)})={^{\dot{\sigma}}}E_{a,b;i,j}^{(r-s_{a,b}^{\mu}+\dot{s}_{a,b}^{\mu})}, \iota({^\sigma}F_{b,a;j,i}^{(r)})={^{\dot{\sigma}}}F_{b,a;j,i}^{(r-s_{b,a}^{\mu}+\dot{s}_{b,a}^{\mu})}.
\]
Then the assertion follows directly from the definition of the $p$-center.
\end{proof}

\begin{Theorem}\label{theorem: p-center of shifted Yangian all equal}
$Z_p({^\mu}Y_n(\sigma))$ is independent of the choice of the admissible shape $\mu$.
\end{Theorem}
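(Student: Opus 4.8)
The plan is to reduce the statement to the unshifted case, Proposition \ref{prop: p-center of Yangian all equal}, by means of the change-of-shift-matrix isomorphism $\iota$ and the transposition anti-automorphism $\tau$. Since $(1^n)$ is admissible to every shift matrix, it is enough to prove $Z_p({^\mu}Y_n(\sigma))=Z_p({^{(1^n)}}Y_n(\sigma))$ for an arbitrary admissible $\mu$; independence of $\mu$ then follows by transitivity. By \eqref{gr of sigma EF abijrp} and \eqref{gr baijrp} both subalgebras have associated graded equal, as a subspace of $\gr Y_n(\sigma)=U(\fg_\sigma)$, to the fixed $p$-center $Z_p(\fg_\sigma)$, realised by lifts of the same free generators. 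Consequently, by the standard filtered-algebra argument (an inclusion of filtered subalgebras with equal associated graded is an equality), it suffices to establish the single inclusion $Z_p({^\mu}Y_n(\sigma))\subseteq Z_p({^{(1^n)}}Y_n(\sigma))$, i.e. that each generator $B_{a;i,j}^{(rp)}$, $({^\sigma}E_{a,b;i,j}^{(r)})^p$ and $({^\sigma}F_{b,a;k,l}^{(r)})^p$ lies in $Z_p({^{(1^n)}}Y_n(\sigma))$.

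First I would treat the positive generators, for an arbitrary shift matrix $\sigma$. Choose a lower triangular $\dot\sigma$ with $s_{i,i+1}+s_{i+1,i}=\dot{s}_{i,i+1}+\dot{s}_{i+1,i}$ for all $i$; then $\mu$ and $(1^n)$ remain admissible to $\dot\sigma$, and by Lemma \ref{lemma:iota send center = center} the isomorphism $\iota$ carries $Z_p({^\mu}Y_n(\sigma))$ and $Z_p({^{(1^n)}}Y_n(\sigma))$ onto the corresponding $p$-centers for $\dot\sigma$, sending $({^\sigma}E_{a,b;i,j}^{(r)})^p$ to the genuine element $(E_{a,b;i,j}^{(r-s_{a,b}^{\mu})})^p$. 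So it suffices to show $(E_{a,b;i,j}^{(r-s_{a,b}^{\mu})})^p\in Z_p({^{(1^n)}}Y_n(\dot\sigma))$. For this one reruns the proof of Proposition \ref{prop: p-center of Yangian all equal}: one inducts on the length of $\mu$ by splitting a single block via Lemma \ref{Lemma: split block lemma}, reducing through the shift map (Lemmas \ref{Lemma:property of psi} and \ref{Lemma:psi pcenter to p-center}) and block-diagonal permutation automorphisms (Lemmas \ref{lemma: p-center stable under permutation auto} and \ref{lemma:DEFab-DEFa}) to the quasideterminant identity $E_1(u)=D_1'(u)T_{1,2}(u)$. The key point is that this reduction produces only positive Drinfeld $p$-elements $(e_{c,d}^{(s)})^p$ with $c<d$ and $s>0$, which for the lower triangular $\dot\sigma$ are exactly the positive generators of $Z_p({^{(1^n)}}Y_n(\dot\sigma))$; hence the output lies in the shifted $p$-center and $\iota^{-1}$ returns it to $Z_p({^{(1^n)}}Y_n(\sigma))$. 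The diagonal generators $B_{a;i,j}^{(rp)}$ need no change of shift matrix: exactly as in Proposition \ref{prop: p-center of Yangian all equal}, the shift map together with the full-rank identity $Z_p({^{(N)}}Y_N)=Z_p({^{(1^N)}}Y_N)$ expresses them through Drinfeld $p$-elements supported on a single diagonal block of $\mu$, on which $\sigma$ vanishes, so these elements already occur among the generators of $Z_p({^{(1^n)}}Y_n(\sigma))$.

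For the negative generators I would transpose. The anti-automorphism $\tau$ sends $Y_n(\sigma)$ onto $Y_n(\sigma^{t})$, where $\sigma^{t}$ is again a shift matrix admissible to $\mu$ and $(1^n)$, and an induction on $b-a$ through \eqref{sigma Eabij} and \eqref{sigma Fbaji} gives $\tau({^\sigma}F_{b,a;k,l}^{(r)})={^{\sigma^{t}}}E_{a,b;l,k}^{(r)}$ (the shift index $s_{b,a}^{\mu}$ becoming $(s^{t})_{a,b}^{\mu}$). Thus $\tau$ maps the generators of $Z_p({^\mu}Y_n(\sigma))$ to those of $Z_p({^\mu}Y_n(\sigma^{t}))$ and identifies $Z_p({^{(1^n)}}Y_n(\sigma^{t}))$ with $Z_p({^{(1^n)}}Y_n(\sigma))$. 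Therefore $({^\sigma}F_{b,a;k,l}^{(r)})^p\in Z_p({^{(1^n)}}Y_n(\sigma))$ as soon as $({^{\sigma^{t}}}E_{a,b;l,k}^{(r)})^p\in Z_p({^{(1^n)}}Y_n(\sigma^{t}))$, and the latter is precisely the positive statement of the previous paragraph applied to the shift matrix $\sigma^{t}$.

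The main obstacle is the negative part together with the shift bookkeeping it forces. Because $\tau$ preserves $Y_n(\sigma)$ only when $\sigma$ is symmetric, the negative generators cannot be handled inside $Y_n(\sigma)$ itself; one is obliged to pass to $Y_n(\sigma^{t})$, which is harmless only because the positive statement was proved uniformly for every shift matrix rather than for a single convenient one. The second delicate point is that Proposition \ref{prop: p-center of Yangian all equal} was written for the full Yangian, so at each inductive step one must verify that the block-splitting and the permutation automorphisms invoked are block-diagonal, hence preserve $Y_n(\sigma)$, and that the decomposition never calls on a truncated (and therefore absent) root-vector $p$-power; this is exactly what guarantees that the membership assertions genuinely take place inside the shifted $p$-center $Z_p({^{(1^n)}}Y_n(\sigma))$ and not merely inside $Z_p({^{(1^n)}}Y_n)$.
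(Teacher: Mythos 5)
Your global strategy is reasonable and partially parallels the paper: the reduction via the graded identification $\gr Z_p({^\mu}Y_n(\sigma))=Z_p(\fg_\sigma)$ to a single inclusion is exactly the paper's opening move (though the paper proves the opposite inclusion $Z_p({^{(1^n)}}Y_n(\sigma))\subseteq Z_p({^\mu}Y_n(\sigma))$), the lower-triangularization via $\iota$ and Lemma \ref{lemma:iota send center = center} matches, the diagonal case via the shift map and the sub-Yangian identity is fine, and your $\tau$-reduction of the negative generators to the positive statement for $\sigma^t$ is a legitimate alternative to the paper's ``one argues similarly''. However, the core step --- that $(E_{a,b;i,j}^{(r)})^p\in Z_p({^{(1^n)}}Y_n(\dot\sigma))$ for lower triangular $\dot\sigma$ --- is not actually proved. ``Rerunning the proof of Proposition \ref{prop: p-center of Yangian all equal}'' inside the shifted setting fails at its first move: that proof reduces higher-root elements to one-step ones via Lemma \ref{lemma:DEFab-DEFa}(2), i.e.\ the transposition $(p_{a+1}(\mu)+1,\,p_b(\mu)+j)$, which for $b>a+1$ crosses blocks. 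It is therefore \emph{not} block-diagonal, does not preserve $Y_n(\dot\sigma)$ (conjugating the shift matrix by such a permutation typically does not even yield a shift matrix), and there is no analogue of Lemma \ref{lemma: p-center stable under permutation auto} for the shifted Drinfeld $p$-center. So your own stated safety check --- ``verify that the permutation automorphisms invoked are block-diagonal'' --- fails precisely for the higher-root generators $({^\sigma}E_{a,b;i,j}^{(r)})^p$ with $b>a+1$, which genuinely occur in $J_p$.

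Your fallback assertion --- that the expression of $(E_{a,b;i,j}^{(r)})^p$ as a polynomial in the free generators of $Z_p({^{(1^n)}}Y_n)$ involves only the positive elements $(e_{c,d}^{(s)})^p$ --- is the entire content of the step, and it is asserted rather than proved. It is plausibly true (one would show, by iterating Lemma \ref{Lemma: split block lemma} and \eqref{EFabij r induction formula}, that the shape-$\mu$ subalgebra generated by the $E$'s sits inside the Drinfeld $E$-subalgebra, and then identify the intersection of $Z_p({^{(1^n)}}Y_n)$ with that subalgebra by a PBW/freeness argument), but none of this appears in your proposal, so there is a genuine gap. The paper avoids the problem entirely: it proves the reverse inclusion one refinement step at a time, comparing $\mu$ with the one-block split $\nu$ of \eqref{def of nu}, via the intersection mechanism
\[
Z_p({^\nu}Y_n(\sigma))\ni ({^\sigma}E_{a,b;i,j}^{(r)})^p=(E_{a,b;i,j}^{(r)})^p\in Z_p({^\nu}Y_n)\cap {^\mu}Y_n(\sigma)^{\geq 0}=Z_p({^\mu}Y_n)\cap {^\mu}Y_n(\sigma)^{\geq 0}\subseteq Z_p({^\mu}Y_n(\sigma))
\]
for lower triangular $\sigma$ (where ${^\sigma}E=E$), followed by $\iota$ for general $\sigma$. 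In this way Proposition \ref{prop: p-center of Yangian all equal} is used purely as a statement about subalgebras of the full Yangian, so all permutation automorphisms act only on $Y_n$, where Lemma \ref{lemma: p-center stable under permutation auto} applies. To repair your argument you would either have to supply a proof of the positivity claim above, or adopt the paper's step-by-step intersection argument.
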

\begin{proof}
We have seen already that $\gr Z_p({^\mu}Y_n(\sigma))$
may be identified with the $p$-center $Z_p(\fg_\sigma)$,
which is independent of the choice of the admissible shape $\mu$,
so that it is enough to show that $Z_p({^{(1^n)}}Y_n(\sigma))\subseteq Z_p({^\mu}Y_n(\sigma))$.
We prove the claim by induction on the length of $\mu$.
Similar to the proof of Proposition \ref{prop: p-center of Yangian all equal},
we can get a new admissible shape $\nu$ by splitting the $b$-th block of $\mu$.
By the induction hypothesis,
we need to check that $Z_p({^\nu}Y_n(\sigma))\subseteq Z_p({^\mu}Y_n(\sigma))$.

Let ${^\mu}Y_n(\sigma)^0$ denote the subalgebra of ${^\mu}Y_n(\sigma)$ generated by the set $I(\sigma)$ and let ${^\mu}Y_n(\sigma)^{\geq 0}$ denotes the parabolic subalgebra of ${^\mu}Y_n(\sigma)$ generated by $I(\sigma)$ and $J(\sigma)$.
By Lemma \ref{Lemma: split block lemma} and Proposition \ref{prop: p-center of Yangian all equal} , we have 
\[
Z_p({^\nu}Y_n(\sigma))\ni B_{a;i,j}^{(rp)}\in Z_p({^\nu}Y_n)\cap {^\mu}Y_n(\sigma)^0=Z_p({^\mu}Y_n)\cap {^\mu}Y_n(\sigma)^0\subseteq  Z_p({^\mu}Y_n(\sigma)).
\]
This implies that ${^\nu}I_p\subseteq Z_p({^\mu}Y_n(\sigma))$.
To prove that ${^\nu}J_p\subseteq Z_p({^\mu}Y_n(\sigma))$,
we first assume that $\sigma$ is lower triangular.
In this case,
we apply again Lemma \ref{Lemma: split block lemma} and Proposition \ref{prop: p-center of Yangian all equal} to see that
\[
Z_p({^\nu}Y_n(\sigma))\ni ({^\sigma}E_{a,b;i,j}^{(r)})^p=(E_{a,b;i,j}^{(r)})^p\in Z_p({^\nu}Y_n)\cap {^\mu}Y_n(\sigma)^{\geq 0}=Z_p({^\mu}Y_n)\cap {^\mu}Y_n(\sigma)^{\geq 0}\subseteq Z_p({^\mu}Y_n(\sigma)).
\]
For general $\sigma$, this follows using the change of shift matrix isomorphism from Lemma \ref{lemma:iota send center = center}.
One argues similarly to obtain ${^\nu}K_p\subseteq Z_p({^\mu}Y_n(\sigma))$.
\end{proof}

\bigskip
\noindent
\textbf{Acknowledgment.} 
We would like to thank Yung-Ning Peng for helpful discussions.
We are particularly grateful to the referee for a very detailed reading of this paper and many constructive comments and suggestions which improve the original manuscript.
This work is supported by the National Natural
Science Foundation of China (Grant Nos. 11801394, 12461005), the Natural Science Foundation of Hubei Province (No. 2025AFB716) and
the Fundamental Research Funds for the Central Universities (Nos. CCNU24JC001, CCNU25JC025, CCNU25JCPT031).

\bigskip

\end{document}